\documentclass[10pt]{article}

\usepackage[english]{babel}
\usepackage[ansinew]{inputenc}
\usepackage{amsmath}
\numberwithin{equation}{section}
\usepackage{amsfonts}
\usepackage{amssymb}
\usepackage{amsthm}
\usepackage{enumitem}
\usepackage[a4paper]{geometry}
\usepackage[pdftex]{graphicx}

\usepackage{afterpage}

\usepackage{fancyhdr}
\usepackage{ifthen}

\newcommand\restr[2]{{
  \left.\kern-\nulldelimiterspace 
  #1 
  \right|_{#2} 
}}

\usepackage{marvosym}

\newcommand\blfootnote[1]{%
  \begingroup
  \renewcommand\thefootnote{}\footnote{#1}%
  \addtocounter{footnote}{-1}%
  \endgroup
}

\DeclareMathOperator*{\GS}{GS}
\DeclareMathOperator*{\GA}{GA}

\DeclareMathOperator*{\FGS}{FGS}
\DeclareMathOperator*{\supp}{supp}
\DeclareMathOperator*{\HGS}{HGS}
\renewcommand\Im{\operatorname{Im}}
\renewcommand\Re{\operatorname{Re}}

\def\ds{\displaystyle}
\def\R{\mathbb R}
\def\C{\mathbb C}
\def\N{\mathbb N}

\newcommand{\D}{\mathcal{D}}

\newcommand{\Sch}{\mathcal{S}}

\AtBeginDocument{
\def\MR#1{}}

\let\OLDthebibliography=\thebibliography
\def\thebibliography#1{
\OLDthebibliography{#1}
\addcontentsline{toc}{section}{\refname}}

\newtheorem{theo}{Theorem}
\numberwithin{theo}{section}
\newtheorem{prop}[theo]{Proposition}
\newtheorem{defin}[theo]{Definition}
\newtheorem{lema}[theo]{Lemma}
\newtheorem{cor}[theo]{Corollary}

\theoremstyle{definition}
\newtheorem{exam}[theo]{Example}

\begin{document}

\title{Quantizations and global hypoellipticity for pseudodifferential operators of infinite order in classes of ultradifferentiable functions}

\author{Vicente Asensio}

\date{\vspace{-0.5cm}}

\maketitle

\begin{abstract}
We study the change of quantization for a class of global pseudodifferential operators of infinite order in the setting  of ultradifferentiable functions of Beurling type. The composition of different quantizations as well as the transpose of a quantization are also analysed, with applications to the Weyl calculus. We also compare global $\omega$-hypoellipticity and global $\omega$-regularity of these classes of pseudodifferential operators.
\blfootnote{Keywords: global classes, pseudodifferential operator, quantizations, hypoellipticity.}
\blfootnote{Mathematical Subject Classification (2010): 46F05, 47G30, 35S05, 35H10.}
\end{abstract}

\section{Introduction}\label{SectionIntroduction}
In the present paper, we deal with the change of quantization in the class of \emph{global} pseudodifferential operators introduced by Jornet and the author in~\cite{AJ}. The symbols are of \emph{infinite} order with exponential growth in all the variables, in contrast to the approach of Zanghirati~\cite{Z1986pseudodifferential} and Fern\'andez, Galbis, and Jornet~\cite{FGJ2005pseudo}, who treat pseudodifferential operators of infinite order in the~\emph{local} sense and infinite order only in the last variable, for Gevrey classes  and for classes of ultradifferentiable functions of the Beurling type in the sense of Braun, Meise, and Taylor~\cite{BMT}. In~\cite{AJ,FGJ2005pseudo}, the composition of two operators is given in terms of a suitable symbolic calculus. On the other hand, Prangoski~\cite{Pran} studies pseudodifferential operators of {\em global} type and infinite order for ultradifferentiable classes of Beurling and Roumieu type in the sense of Komatsu. We refer also to~\cite{C1, C2, CT, NR2010global} and the references therein to find other papers discussing pseudodifferential operators defined in global classes (especially Gelfand-Shilov classes).

The appropriate setting in the present paper and in~\cite{AJ} is the space of (non-quasianalytic) global ultradifferentiable functions defined by Bj\"orck~\cite{Bj}, characterized as those $f \in \mathcal{S}(\mathbb{R}^d)$, i.e. in the Schwartz class, such that for all $\lambda>0$ and all $\alpha \in \mathbb{N}_0^d$ both
\[ \sup_{x \in \mathbb{R}^d} e^{\lambda \omega(x)} |\partial^{\alpha} f(x)| \qquad  \mbox{and}  \qquad \sup_{\xi \in \mathbb{R}^d} e^{\lambda \omega(\xi)} |\partial^{\alpha} \widehat{f}(\xi)| \]
are finite, $\omega$ denoting a (non-quasianalytic) weight function in the sense of~\cite{BMT}. These spaces are always contained in the Schwartz class, and they equal the Schwartz class for the case $\omega(t)=\log(1+t)$, $t>0$, not considered in our setting.


The notion of~\emph{hypoellipticity} comes from the problem of determining whether a distribution solution to the partial differential equation $Pu=f$ is a classical solution or not.
The authors in~\cite{FGJ2005pseudo} provide adequate conditions for the construction of a (left)~\emph{parametrix} for their symbols, which guarantee the hypoellipticity in the desired class in~\cite{FGJ2004hypo}.
For the operators defined in~\cite{Pran}, the corresponding construction of parametrices is done in Cappiello, Pilipovi\'c, and Prangoski~\cite{CPP}. Here, we develop the method of the parametrix in Section~\ref{Parametrix} for the class of operators introduced in~\cite{AJ}, but also for every quantization of the pseudodifferential operator. In particular, we obtain a sufficient condition for any quantization of a pseudodifferential operator to be $\omega$-regular in the sense of Shubin~\cite{Shu2001pseudo} (see the definition of $\omega$-regularity at the beginning of Section~\ref{Parametrix}). In a forthcoming paper, the global parametrix method presented here will be used to define a suitable Weyl wave front set for $\Sch'_\omega(\R^d)$ and complete the characterization of global wave front sets given in~\cite{BJO2019TheGabor}.

As we mention at the beginning, one of the goals of the present paper is to extend the results in~\cite{AJ} by adapting them for a valid change of quantization for these symbols (see Sections~\ref{SectionSymbolicCalculus} and \ref{SectionTransposeAndComposition}). Namely, we follow the ideas for the change of quantization set within the framework of global symbol classes of Shubin~\cite[\S 23]{Shu2001pseudo}.
In~\cite{Pran} it is considered the change of quantization and its corresponding symbolic calculus for classes in the sense of Komatsu~\cite{KomI}, also in the Roumieu setting. Nonetheless, as pointed out in~\cite{AJ}, whenever the weight $\omega$ is under the mild condition
\[ \exists H>1: 2\omega(t) \leq \omega(Ht) + H, \qquad t>0, \]
the classes of ultradifferentiable functions are equally defined either by weights as in~\cite{BMT} or by sequences as in~\cite{KomI} (see Bonet, Meise, and Melikhov~\cite{BMM}). Thus, if the weight sequence $(M_p)_p$ satisfies only stability under ultradifferential operators, as assumed in~\cite{Pran}, our classes of symbols (and amplitudes) might not coincide with the ones defined in~\cite{Pran}. It turns out that, even only in the Beurling setting, we are discussing different cases.


Finally, in Section~\ref{SectionHypoellipticity}, inspired by Boggiatto, Buzano, and Rodino~\cite{BBR1996Global}, we show that some $\omega$-hypoelliptic symbols are stable under change of quantization and we compare the notions of $\omega$-regularity and $\omega$-hypoellipticity following the ideas of \cite{BJORegularity}.

\section{Preliminaries}\label{SectionPreliminaries}
We begin with some notation on multi-indices. Throughout the text we will denote by $\alpha = (\alpha_1,\ldots,\alpha_d) \in \mathbb{N}_{0}^{d}$ a multi-index of dimension $d$. The length of $\alpha$ is $|\alpha| = \alpha_1 + \cdots + \alpha_d$. For two multi-indices $\alpha$ and $\beta$ we write $\beta \leq \alpha$ for $\beta_j \leq \alpha_j,$ when $j=1,\ldots,d$. Moreover, $\alpha! = \alpha_1! \cdots \alpha_d!$ and if $\beta \leq \alpha$, then $\binom{\alpha}{\beta} := \frac{\alpha!}{\beta!(\alpha-\beta)!}$. For $x=(x_{1},\ldots,x_d) \in \mathbb{R}^d$, we have $x^{\alpha} = x_1^{\alpha_1} \cdots x_d^{\alpha_d}$. We write $\partial^{\alpha} = \big(\frac{\partial}{\partial x_1}\big)^{\alpha_1} \ldots \big(\frac{\partial}{\partial x_d}\big)^{\alpha_d}$ and we set
\[ D^{\alpha} = D^{\alpha_1}_{x_1} \cdots D^{\alpha_d}_{x_d}, \]
where $D^{\alpha_j}_{x_j} = (-i)^{|\alpha_j|} \big(\frac{\partial}{\partial x_j}\big)^{\alpha_j}$, $j=1,\ldots,d$.

In our setting we work with weight functions as the ones defined by Braun, Meise, and Taylor~\cite{BMT}.
\begin{defin}\label{DefinitionWeightFunction}
A \emph{non-quasianalytic weight function} $\omega:[0,+\infty[
\to[0,+\infty[$ is a continuous and increasing function which satisfies:
\begin{itemize}
\item[$(\alpha)$]
\vspace{-1.2mm} $\displaystyle\exists\ L\geq 1: \
\omega(2t)\leq L(\omega(t)+1),\ t\geq0,$
\item[$(\beta)$]
\vspace{-1.9mm} $\displaystyle \int_{1}^{+\infty}
\frac{\omega(t)}{t^2} dt < +\infty$,
\item[$(\gamma)$]
\vspace{-1.9mm} $\displaystyle \log(t) = o(\omega(t)) $ as $t \to
\infty$,
\item[$(\delta)$]
\vspace{-1.9mm} $\displaystyle \varphi_{\omega}: t \mapsto
\omega(e^{t})\ $ is convex.
\end{itemize}
We extend the weight function $\omega$ to $\mathbb{C}^d$ in a radial way: $\omega(z)=\omega(|z|)$, $z \in \mathbb{C}^d$, where $|z|$ denotes the Euclidean norm.
\end{defin}
All the results below work also if we consider the more general condition
\begin{itemize}
\item[$(\gamma')$] \vspace{-1mm} $\displaystyle \log(t) = O(\omega(t))$ as $t \to \infty$,
\end{itemize}
but for the sake of simplicity we use $(\gamma)$.

From Definition~\ref{DefinitionWeightFunction}($\alpha$) we immediately have:
\begin{equation}\label{EqOmegaDiff}
\omega(x+y) \leq L(\omega(x)+\omega(y)+1), \qquad x,y \in \mathbb{R}^d.
\end{equation}
For $z \in \C^d$ we denote $\langle z \rangle:=\sqrt{1+|z|^2}$. From~\eqref{EqOmegaDiff} we have
\begin{equation}\label{EqOmegaX}
\omega(\langle z \rangle) \leq \omega(1+|z|) \leq L\omega(z) + L(1+\omega(1)), \qquad z \in \mathbb{C}^d.
\end{equation}

\begin{defin}
Given a weight function $\omega$, the \emph{Young conjugate} $\varphi^{\ast}_{\omega}: [0,\infty[ \to [0,\infty[$ of $\varphi_{\omega}$ is defined as
\[ \varphi^{\ast}_{\omega}(t) := \sup_{s \geq 0} \{ st - \varphi_{\omega}(s) \}. \]
\end{defin}
When the weight function $\omega$ is clear or irrelevant in the context, we simply denote $\varphi_{\omega}$ and $\varphi^{\ast}_{\omega}$ by $\varphi$ and $\varphi^{\ast}$. From now on, we assume that $\omega|_{[0,1]} \equiv 0$ (in particular, this gives that $\omega(1)=0$ in formula \eqref{EqOmegaX}), which implies that $\varphi^{\ast}(0)=0$. Moreover, it is known that $\varphi^{\ast}$ is convex, the function $\varphi^{\ast}(x)/x$ is increasing for $x>0$ and $\varphi^{**}:=(\varphi^\ast)^*= \varphi$ (see \cite{BMT}). From \cite[Remark 2.8(c)]{hm} is not difficult to see (cf. \cite[Lemma A.1]{BJORealPaley}):

\begin{prop}\label{PropTechnical}
If a weight function $\omega$ satisfies $\omega(t)=o(t^{a})$ as $t\to+\infty$ for some $0<a\leq 1$, then for every $B>0$ and $\lambda >0$ there exists $C>0$ such that
\[ B^{n} n! \leq C e^{a\lambda \varphi^{\ast}(\frac{n}{\lambda})}, \qquad  n \in \mathbb{N}_{0}. \]
\end{prop}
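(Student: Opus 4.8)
The plan is to start from the definition of the Young conjugate and deduce an upper bound for $\varphi_\omega$ from the hypothesis $\omega(t) = o(t^a)$, then convert this into a lower bound for $\varphi^\ast_\omega$, and finally apply Stirling's formula to $n!$. Concretely, write $\varphi_\omega(s) = \omega(e^s)$; the assumption $\omega(t) = o(t^a)$ means that for every $\varepsilon > 0$ there is $s_\varepsilon \geq 0$ such that $\varphi_\omega(s) \leq \varepsilon e^{as}$ for all $s \geq s_\varepsilon$, and since $\varphi_\omega$ is bounded on $[0, s_\varepsilon]$ we can absorb the bounded part into an additive constant, obtaining $\varphi_\omega(s) \leq \varepsilon e^{as} + D_\varepsilon$ for all $s \geq 0$.

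Next I would estimate $\varphi^\ast_\omega$ from below. Using $\varphi^\ast_\omega(t) = \sup_{s \geq 0}\{ st - \varphi_\omega(s)\} \geq st - \varepsilon e^{as} - D_\varepsilon$ for any fixed choice of $s$, I would optimize the right-hand side over $s$: the function $s \mapsto st - \varepsilon e^{as}$ is maximized at $s^\ast = \frac{1}{a}\log\big(\frac{t}{a\varepsilon}\big)$ (valid once $t$ is large enough that $s^\ast \geq 0$), giving the value $\frac{t}{a}\log\big(\frac{t}{a\varepsilon}\big) - \frac{t}{a} = \frac{t}{a}\log t - \frac{t}{a}\log(a\varepsilon e)$. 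Hence $\varphi^\ast_\omega(t) \geq \frac{t}{a}\log t - \frac{t}{a}\log(a e \varepsilon) - D_\varepsilon$ for $t$ large, and (again absorbing the finitely many small values of $t$ into the additive constant) $a\varphi^\ast_\omega(t) \geq t\log t - t\log(ae\varepsilon) - aD_\varepsilon$ for all $t \geq 0$.

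Now substitute $t = n/\lambda$ and multiply by $\lambda$: we get $a\lambda\varphi^\ast_\omega(n/\lambda) \geq n\log n - n\log\lambda - n\log(ae\varepsilon) - a\lambda D_\varepsilon = n\log n - n\log(\lambda a e \varepsilon) - a\lambda D_\varepsilon$. Exponentiating, $e^{a\lambda\varphi^\ast_\omega(n/\lambda)} \geq n^n (\lambda a e \varepsilon)^{-n} e^{-a\lambda D_\varepsilon}$. On the other hand, Stirling's bound gives $n! \leq C_0\, n^n e^{-n}\sqrt{n} \leq C_1\, n^n e^{-n}\, 2^n$ (crudely), so $B^n n! \leq C_1 (2B/e)^n n^n$. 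Comparing, $\frac{B^n n!}{e^{a\lambda\varphi^\ast_\omega(n/\lambda)}} \leq C_1 e^{a\lambda D_\varepsilon}\big(\frac{2B\lambda a e \varepsilon}{e}\big)^n = C_1 e^{a\lambda D_\varepsilon}\big(2B\lambda a \varepsilon\big)^n$, and choosing $\varepsilon$ small enough that $2B\lambda a\varepsilon \leq 1$ makes the geometric factor bounded by $1$, so the whole quotient is bounded by the constant $C := C_1 e^{a\lambda D_\varepsilon}$, which depends only on $B$ and $\lambda$ (and $a$, $\omega$). This yields the claimed inequality $B^n n! \leq C e^{a\lambda\varphi^\ast_\omega(n/\lambda)}$.

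The only mildly delicate points are bookkeeping ones: making sure the passage from "$o(t^a)$" to the global bound $\varphi_\omega(s) \leq \varepsilon e^{as} + D_\varepsilon$ is clean (which works because $\varphi_\omega$ is continuous, hence bounded on compact intervals), and checking that the optimization in $s$ is legitimate for all but finitely many $n$ so that the small-$n$ cases can be swept into the constant $C$ (they can, since there are finitely many of them for each fixed $\lambda$). I expect the main obstacle — if any — to be purely organizational: choosing $\varepsilon$ as a function of $B,\lambda,a$ at the right moment and tracking how the additive constant $D_\varepsilon$ then feeds into the final multiplicative constant. Everything else is the standard Legendre-transform duality between power-type growth of $\omega$ and $n\log n$-type growth of $\varphi^\ast_\omega$, exactly as in \cite[Remark 2.8(c)]{hm}.
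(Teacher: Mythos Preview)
Your argument is correct: the passage from $\omega(t)=o(t^a)$ to the global bound $\varphi_\omega(s)\le \varepsilon e^{as}+D_\varepsilon$, the explicit Legendre-transform optimisation giving $a\varphi^\ast_\omega(t)\ge t\log t - t\log(ae\varepsilon)-aD_\varepsilon$, and the comparison with Stirling after substituting $t=n/\lambda$ are all sound, and the choice $2B\lambda a\varepsilon\le 1$ closes the estimate with a constant depending only on $B,\lambda,a,\omega$. The paper does not actually supply its own proof of this proposition---it only points to \cite[Remark~2.8(c)]{hm} and \cite[Lemma~A.1]{BJORealPaley}---and your computation is precisely the standard Legendre-duality calculation those references encode, so there is nothing to compare beyond noting that you have written out what the paper leaves implicit.
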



The following result can be found in~\cite{BMT}.
\begin{lema}\label{lemmafistrella}
\begin{itemize}
\item[(1)] Let $L>0$ be such that $\omega(et) \leq L(\omega(t)+1)$. Then
\begin{equation}\label{EqLLambdaVarphi}
\lambda L^n \varphi^{\ast}\big(\frac{y}{\lambda L^n}\big) + ny \leq \lambda \varphi^{\ast}\big(\frac{y}{\lambda}\big) + \lambda \sum_{j=1}^n L^j
\end{equation}
for every $y \geq 0$, $\lambda>0$, $n\in \mathbb{N}$.
\item[(2)] For all $s,t,\lambda>0$, we have
\begin{equation}\label{EqEstimationVarphi}
2\lambda\varphi^{\ast}\big(\frac{s+t}{2\lambda}\big) \leq \lambda\varphi^{\ast}\big(\frac{s}{\lambda}\big) + \lambda\varphi^{\ast}\big(\frac{t}{\lambda}\big) \leq \lambda\varphi^{\ast}\big(\frac{s+t}{\lambda}\big).
\end{equation}
\end{itemize}
\end{lema}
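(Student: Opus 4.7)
The plan is to treat the two parts independently, as each uses a different property of $\varphi^*$ from those collected just before the statement.

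For part (1), I would first translate the hypothesis $\omega(et) \leq L(\omega(t)+1)$ into the language of $\varphi = \varphi_\omega = \omega \circ \exp$: it reads $\varphi(t+1) \leq L\varphi(t) + L$. Iterating this $n$ times (a routine induction, using that $\varphi$ is nonnegative and nondecreasing) yields
\[ \varphi(s) \leq L^n \varphi(s-n) + \sum_{j=1}^n L^j, \qquad s \geq n. \]
Next I would unfold both sides of the target inequality through the definition of the Young conjugate:
\[ \lambda L^n \varphi^{\ast}\Big(\frac{y}{\lambda L^n}\Big) = \sup_{s \geq 0}\bigl\{ sy - \lambda L^n \varphi(s)\bigr\}, \qquad \lambda \varphi^{\ast}\Big(\frac{y}{\lambda}\Big) = \sup_{s \geq 0}\bigl\{ sy - \lambda \varphi(s)\bigr\}. \]
Then I would shift the supremum on the left-hand side by the substitution $s \mapsto s-n$ (so the $+ny$ term gets absorbed into the linear part), arriving at
\[ \lambda L^n \varphi^{\ast}\Big(\frac{y}{\lambda L^n}\Big) + ny = \sup_{s \geq n}\bigl\{ sy - \lambda L^n \varphi(s-n)\bigr\}. \]
Applying the iterated inequality $L^n \varphi(s-n) \geq \varphi(s) - \sum_{j=1}^n L^j$ inside this supremum gives the bound and concludes (1).

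For part (2), both inequalities follow directly from the properties of $\varphi^*$ already recorded. The left inequality is just convexity of $\varphi^*$ applied at the midpoint of $s/\lambda$ and $t/\lambda$, multiplied by $2\lambda$. For the right inequality I would exploit that $\varphi^*(x)/x$ is increasing on $(0,\infty)$: setting $a = s/\lambda$, $b = t/\lambda$, this gives $\varphi^*(a)/a \leq \varphi^*(a+b)/(a+b)$ and $\varphi^*(b)/b \leq \varphi^*(a+b)/(a+b)$; multiplying by $a$ and $b$ respectively and adding yields $\varphi^*(a) + \varphi^*(b) \leq \varphi^*(a+b)$, and multiplication by $\lambda$ finishes the argument.

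The main obstacle is the bookkeeping in (1): one must carry the constant $\sum_{j=1}^n L^j$ through the change of variables cleanly and verify that enlarging the supremum from $s \geq n$ to $s \geq 0$ (to match $\lambda\varphi^*(y/\lambda)$) causes no loss. The condition $\omega|_{[0,1]} \equiv 0$ (equivalently $\varphi(0) = 0$) makes the $s < n$ part harmless, since the integrand is then bounded by quantities already controlled. Part (2) is essentially a one-line consequence of the structural properties of $\varphi^*$ listed above.
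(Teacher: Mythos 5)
Your proof is correct. The paper itself gives no proof for this lemma, citing it directly from Braun--Meise--Taylor, so there is nothing to compare against; what you have supplied is a sound self-contained argument. Part (1) proceeds exactly as one would want: translate the hypothesis into $\varphi(t+1) \leq L\varphi(t)+L$, iterate to $\varphi(s) \leq L^n\varphi(s-n) + \sum_{j=1}^n L^j$ for $s\geq n$, and then unwind both sides of the inequality through the Legendre--Young supremum, shifting $s\mapsto s+n$ to absorb the $ny$ term. Part (2) is a direct application of convexity (left inequality) and monotonicity of $\varphi^*(x)/x$ (right inequality), both of which are stated just before the lemma. One small remark: your worry in the last paragraph about the $s<n$ range is unnecessary. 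After the substitution you obtain $\sup_{u\geq n}\{uy-\lambda L^n\varphi(u-n)\}$, apply the iterated inequality under the supremum, and then pass to $\sup_{u\geq 0}$; since a supremum can only increase when its domain is enlarged and you want an upper bound, this step is automatically safe, with no appeal to $\omega|_{[0,1]}\equiv 0$ needed. That normalization plays no role in part (1).
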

We will consider without losing generality with no explicit mention that the constant $L\geq 1$ that comes from Definition~\ref{DefinitionWeightFunction}($\alpha$) fulfils the condition of Lemma~\ref{lemmafistrella}. For more results involving $\varphi^{\ast}$ see, for instance, \cite{AJ, BMT, FGJ2005pseudo} and \cite[Lemma A.1]{BJORealPaley}.

We deal with a class of global ultradifferentiable functions, which extends the classical Schwartz class with the use of weight functions. It was introduced by Bj\"orck~\cite{Bj}, but only considering a subadditive weight function $\omega$ (so the following definition is slightly more general than the given by Bj\"orck).
\begin{defin}
\label{def3}
For a weight $\omega$ as in Definition \ref{DefinitionWeightFunction} we define
$\Sch_\omega(\R^{d})$ as
the set of all $u\in L^1(\R^d)$ such that ($u$ and its Fourier transform $\widehat{u}$ belong to $C^\infty(\R^d)$ and)
\begin{itemize}
\item[(i)] for each $\lambda>0$ and $\alpha\in\N^d_0,\quad\ds
\sup_{x\in\R^d}e^{\lambda\omega(x)}|D^\alpha u(x)|<+\infty,$
\item[(ii)] for each
$\lambda>0$ and $\alpha\in\N^d_0,\quad\ds
\sup_{\xi\in\R^d}e^{\lambda\omega(\xi)}|D^\alpha\widehat{u}(\xi)|<+\infty.$
\end{itemize}
The corresponding strong dual is denoted by
$\Sch'_\omega(\R^{d})$ and is the set of all the linear and continuous
functionals $u:\Sch_\omega(\R^{d})\to\C$. We say that an element of
$\Sch'_\omega(\R^{d})$ is an {\em $\omega$-temperate ultradistribution.}
\end{defin}
The space $\Sch_\omega(\R^{d})$ has been studied for different purposes by many authors. We refer, for instance, to \cite{BJORegularity,BJORealPaley,GZ} for some examples of publications that treat different problems in the setting of the class $\Sch_\omega(\R^{d})$. We recall here \cite[Lemma 2.11]{AJ}, which will be useful below.
\begin{lema}\label{LemmaEquivalence}
If $f \in \mathcal{S}(\R^{d})$, then $f \in \Sch_\omega(\R^{d})$ if and only if for every $\lambda,\mu>0$ there is $C_{\lambda,\mu}>0$ such that for all $\alpha \in \mathbb{N}_{0}^{d}$ and $x\in\R^d,$ we have
\[ |D^{\alpha} f(x)| \leq C_{\lambda,\mu} e^{\lambda \varphi^{\ast}\big( \frac{|\alpha|}{\lambda} \big) } e^{-\mu \omega(x)}. \]
\end{lema}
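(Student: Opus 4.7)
The plan is to establish both implications by combining the Fourier inversion formula with the Young conjugate inequality $st \leq \varphi(s) + \varphi^{\ast}(t)$, which in the rescaled form
\[ t^n \leq e^{\lambda \omega(t) + \lambda \varphi^{\ast}(n/\lambda)}, \qquad t \geq 0,\ n \in \N_0,\ \lambda > 0, \]
bridges polynomial bounds and exponential $\omega$-bounds. The dual inequality, used to extract exponential decay from uniform-in-$n$ polynomial estimates, is
\[ \inf_{n \in \N_0}\, |t|^{-n}\, e^{\lambda \varphi^{\ast}(n/\lambda)} \leq C\, e^{-\lambda'\omega(t)}, \]
a consequence of $\varphi^{\ast\ast} = \varphi$.

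For the implication $(\Leftarrow)$, condition (i) of Definition \ref{def3} is immediate from the hypothesis. For condition (ii), I would use the Fourier identity $\xi^\gamma D^\alpha\hat f(\xi) = (-i)^{|\gamma|+|\alpha|}\F(D^\gamma(x^\alpha f))(\xi)$ and the crude $L^1$-bound $|\xi|^{|\gamma|}|D^\alpha\hat f(\xi)| \leq \|D^\gamma(x^\alpha f)\|_{L^1}$. Expanding by Leibniz, applying the hypothesis to $|D^\eta f(x)|$, absorbing $|x|^{|\alpha-\eta|}$ into $e^{\mu_1\omega(x)+\mu_1\varphi^{\ast}(|\alpha|/\mu_1)}$ via the Young bound (with $\mu > \mu_1$ so that $\int e^{(\mu_1-\mu)\omega(x)} dx < \infty$), and merging $\varphi^{\ast}$-summands via \eqref{EqEstimationVarphi} produces $|\xi|^{|\gamma|}|D^\alpha\hat f(\xi)|\leq C\,e^{\lambda'\varphi^{\ast}((|\alpha|+|\gamma|)/\lambda')}$ for arbitrary $\lambda'$. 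Taking the infimum over $|\gamma|$ via the dual inequality converts $|\xi|^{|\gamma|}$-growth into the $e^{\lambda\omega(\xi)}$-decay required in (ii).

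For the harder implication $(\Rightarrow)$, assume $f\in\Sch_\omega$, so both $f$ and $\hat f$ together with all their derivatives have rapid $\omega$-decay. The argument proceeds in two layers applied symmetrically to $f$ and $\hat f$. First, direct Fourier inversion with $|\hat f(\xi)|\leq C_\nu e^{-\nu\omega(\xi)}$ and the Young bound on $|\xi|^{|\alpha|}$ (choosing $\nu$ large enough that $\int e^{(\lambda-\nu)\omega(\xi)}d\xi<\infty$) yields $|D^\alpha f(x)|\leq C_\lambda\,e^{\lambda\varphi^{\ast}(|\alpha|/\lambda)}$ uniformly in $x$; by the mirror argument (using only the trivial $\omega$-decay $|f(x)|\leq C_{\nu'}e^{-\nu'\omega(x)}$), one also has $|D^\beta\hat f(\xi)|\leq C_\mu\,e^{\mu\varphi^{\ast}(|\beta|/\mu)}$ uniformly in $\xi$. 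Second, integration by parts in $\xi$ gives
\[ x^\beta D^\alpha f(x) = (-1)^{|\beta|}(2\pi)^{-d}\int e^{i\langle x,\xi\rangle} D^\beta_\xi\bigl(\xi^\alpha\hat f(\xi)\bigr) d\xi, \]
and expanding $D^\beta_\xi(\xi^\alpha\hat f)$ by Leibniz produces terms $\binom{\beta}{\eta}\,(\alpha!/(\alpha-\eta)!)\,\xi^{\alpha-\eta}\,D^{\beta-\eta}\hat f(\xi)$. Combining the uniform $\varphi^{\ast}$-bound on $D^{\beta-\eta}\hat f$ from the first layer with the $\omega$-decay of Definition \ref{def3}(ii), one secures an integrable estimate of the form $C_{\mu,\nu}\,e^{\mu\varphi^{\ast}(|\beta-\eta|/\mu)}\,e^{-\nu\omega(\xi)}$, and integration yields
\[ |x|^{|\beta|}|D^\alpha f(x)| \leq C_{\lambda,\mu}\,e^{\lambda\varphi^{\ast}(|\alpha|/\lambda)}\,e^{\mu\varphi^{\ast}(|\beta|/\mu)}. \]
A final infimum over $|\beta|$ via the dual Young inequality converts $|x|^{|\beta|}$-growth into the desired $e^{-\mu'\omega(x)}$-decay.

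The main obstacle is the combinatorial/functional-analytic bookkeeping in the $(\Rightarrow)$ direction: factorials $\alpha!/(\alpha-\eta)!$ and binomials $\binom{\beta}{\eta}$ arising from the Leibniz expansion must be absorbed into single $\varphi^{\ast}$-factors by means of Proposition \ref{PropTechnical} (i.e., $B^n n!\leq Ce^{a\lambda\varphi^{\ast}(n/\lambda)}$), while multiple $\varphi^{\ast}$-summands in $|\alpha|,|\beta|,|\alpha-\eta|,|\beta-\eta|$ must be consolidated via \eqref{EqEstimationVarphi} and \eqref{EqLLambdaVarphi} of Lemma \ref{lemmafistrella}, each consolidation costing a bounded rescaling of $\lambda,\mu$ which is admissible since these parameters are arbitrary. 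The most delicate point is making the $(\beta-\eta)$-dependence of the $\omega$-decay constant of $D^{\beta-\eta}\hat f$ disappear, which one achieves either by a simultaneous bootstrap of the two characterizations or by invoking the Fréchet-space structure of $\Sch_\omega$ via closed-graph arguments.
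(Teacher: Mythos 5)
The paper does not prove this lemma; it cites it as \cite[Lemma 2.11]{AJ}, so there is no internal argument to compare against and I can only assess your sketch on its own terms. Your $(\Leftarrow)$ direction is sound: condition (i) is immediate, and for (ii) the identity $\xi^\gamma D^\alpha\widehat f = \pm\,\mathcal{F}\bigl(D^\gamma(x^\alpha f)\bigr)$ together with Leibniz, the Young pairing $|x|^n\le e^{\mu\omega(x)+\mu\varphi^*(n/\mu)}$, Proposition~\ref{PropTechnical} for the factorials, and \eqref{EqEstimationVarphi} to merge $\varphi^*$-terms does produce a bound of the stated kind, because the hypothesis supplies constants $C_{\lambda,\mu}$ that are \emph{uniform} in the multi-index differentiated. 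That uniformity is the whole point, and it is exactly what fails in the other direction.

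In the $(\Rightarrow)$ direction your ``second layer'' needs an estimate of the form $|D^{\beta-\eta}\widehat f(\xi)|\le C_{\mu,\nu}\,e^{\mu\varphi^*(|\beta-\eta|/\mu)}e^{-\nu\omega(\xi)}$ with $C_{\mu,\nu}$ independent of $\beta-\eta$, but this is literally the statement of the lemma applied to $\widehat f\in\Sch_\omega(\R^d)$; invoking it is circular. Definition~\ref{def3}(ii) only gives $|D^{\gamma}\widehat f(\xi)|\le C_{\nu,\gamma}e^{-\nu\omega(\xi)}$ with a constant depending on $\gamma$, and your ``first layer'' gives a uniform bound with no $\xi$-decay; neither interpolation (which retains the $\gamma$-dependent factor) nor the $G^s(D_\xi)$ integration by parts (which reintroduces $D^{\eta}\widehat f$ of arbitrarily high order through Leibniz) closes the gap. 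Your two proposed repairs do not work as stated either: a closed-graph argument would presuppose that the identity map $\Sch_\omega\to Y$, with $Y$ the space defined by the target seminorms, is well defined, which is again the assertion to be proved; and ``simultaneous bootstrap'' is left unspecified and, given the symmetry $f\leftrightarrow\widehat f$, a naive bootstrap simply cycles. A genuinely non-circular route (e.g.\ via the short-time Fourier transform characterization of $\Sch_\omega$, or a quantitative version of the Björck-type Paley--Wiener argument) is needed here, and without it the hard implication is not established.
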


From now on, $m$ denotes a real number and $0<\rho\leq 1$. In the following, we consider global symbols and global amplitudes of infinite order  defined very similarly to the ones in~\cite[Definitions 3.1 and 3.2]{AJ}. The unique difference is the factor $e^{m\omega(x,\xi)}$ in the case of symbols and $e^{m\omega(x,y,\xi)}$ in the case of amplitudes, which are more suitable for our purposes. We observe that these  definitions are \emph{equivalent} to those in~\cite{AJ}. In fact, when considering symbols for example, it is enough to use that there exist $A,B>0$ such that $A(\omega(x)+\omega(\xi)) \leq \omega(x,\xi) \leq B(\omega(x)+\omega(\xi)+1)$ for every $x,\xi \in \mathbb{R}^d$.
\begin{defin}\label{gs}
A \emph{global symbol} (of order $m$) in $\GS^{m,\omega}_{\rho}$ is a function $p(x,\xi) \in C^{\infty}(\mathbb{R}^{2d})$ such that for all $n\in \mathbb{N}$ there exists $C_n>0$ with
\[ |D^{\alpha}_x D^{\beta}_{\xi} p(x,\xi)| \leq C_n \langle(x,\xi)\rangle^{-\rho|\alpha+\beta|} e^{n\rho \varphi^{\ast}\big(\frac{|\alpha+\beta|}{n}\big)} e^{m\omega(x,\xi)}, \]
for all $\alpha,\beta \in \mathbb{N}_0^d$ and $x,\xi \in \mathbb{R}^d$.
\end{defin}

\begin{defin}\label{ga}
A \emph{global amplitude} (of order $m$) in $\GA^{m,\omega}_{\rho}$ is a function $a(x,y,\xi) \in C^{\infty}(\mathbb{R}^{3d})$ such that for all $n\in \mathbb{N}$ there exists $C_n>0$ with
\[ |D^{\alpha}_x D^{\beta}_y D^{\gamma}_{\xi} a(x,y,\xi)| \leq C_n \frac{\langle x-y \rangle^{\rho|\alpha+\beta+\gamma|}}{\langle(x,y,\xi)\rangle^{\rho|\alpha+\beta+\gamma|}} e^{n\rho \varphi^{\ast}\big(\frac{|\alpha+\beta+\gamma|}{n}\big)} e^{m\omega(x,y,\xi)}, \]
for all $\alpha,\beta,\gamma \in \mathbb{N}_0^d$ and $x,y,\xi \in \mathbb{R}^d$.
\end{defin}

%

In~\cite{AJ} we introduce global pseudodofferential operators on $\Sch_{\omega}(\R^d)$ by means of oscillatory integrals for global amplitudes as  in Definition~\ref{ga} (see~\cite[Proposition 3.3]{AJ}). It turns out that the action of a pseudodifferential operator on a function in $\Sch_{\omega}(\R^d)$ can be written as an iterated integral \cite[Theorem 3.7]{AJ} and it is continuous and linear from $\Sch_{\omega}(\R^d)$ into itself. In fact, we use these properties to state the following definition:
\begin{defin}
Given a global amplitude $a(x,y,\xi) \in \GA^{m,\omega}_{\rho}$ (as in Definition~\ref{ga}), we define the associated \emph{global pseudodifferential operator} $A:\mathcal{S}_{\omega}(\mathbb{R}^d) \to \mathcal{S}_{\omega}(\mathbb{R}^d)$ by
\[ A(f)(x) := \int \Big( \int e^{i(x-y)\cdot \xi} a(x,y,\xi) f(y) dy \Big) d\xi, \qquad f\in\Sch_\omega(\R^d). \]
\end{defin}
Moreover, this operator can be extended linearly and continuously to an operator $\widetilde{A}$ from $\Sch'_\omega(\R^d)$ into $\Sch'_\omega(\R^d)$ \cite[Proposition 3.10]{AJ}.

At some stages we need classes of ultradifferentiable functions defined in the  local sense; we refer the reader to~\cite{BMT, FGJ2005pseudo} for a theory of pseudodifferential operators of infinite order when defined in local spaces. Let $\omega$ be a weight function. For an open set $\Omega \subset \mathbb{R}^d$, we define the space of {\em ultradifferentiable functions of Beurling type} in $\Omega$ as
\[ \mathcal{E}_{\omega}(\Omega) := \big\{ f \in C^{\infty}(\Omega): |f|_{K,\lambda}<\infty \ \text{for every} \ \lambda>0, \ \text{and every} \ K \subset \Omega \ \text{compact} \big\}, \]
where
\[ |f|_{K,\lambda}:=  \sup_{\alpha \in \mathbb{N}_0^d} \sup_{x \in K}|D^{\alpha}f(x)| e^{-\lambda \varphi^{\ast}\big(\frac{|\alpha|}{\lambda}\big)}. \]
We endow such space with the Fr\'echet topology given by the sequence of seminorms $|f|_{K_n,n}$, where $(K_n)_{n}$ is any compact exhaustion of $\Omega$ and $n\in\N$. The strong dual of $\mathcal{E}_{\omega}(\Omega)$ is the space of compactly supported ultradistributions of Beurling type and is denoted by $\mathcal{E}'_{\omega}(\Omega)$.

The space of {\em ultradifferentiable functions of Beurling type with compact support} in $\Omega$ is denoted by $\D_{\omega}(\Omega)$, and it is the space of those functions $f\in \mathcal{E}_{\omega}(\Omega)$ such that its support, denoted by $\supp f$, is compact in $\Omega$. Its corresponding dual space is denoted by $\D'_{\omega}(\Omega)$ and it is called the space of {\em ultradistributions of Beurling type} in $\Omega$. The following continuous embeddings hold:
\[ \mathcal{E}'_{\omega}(\R^d)\subseteq \Sch'_{\omega}(\R^d) \subseteq \D'_{\omega}(\R^d). \]
We recall that the space $\Sch_{\omega}(\R^d)$, as well as its strong dual $\Sch'_{\omega}(\R^d)$, are stable under Fourier transform (see, for instance, Bj\"orck~\cite{Bj}).

Since the global amplitudes have exponential growth in all the variables, it becomes useful a particular kind of integration by parts to understand the behaviour of a pseudodifferential operator in this setting. Following~\cite{Pran}, but with a different point of view, we use in~\cite{AJ} entire functions with prescribed exponential growth in terms of a weight function $\omega$. The existence of this type of entire functions was proven by Braun~\cite{Br} and Langenbruch~\cite{Lan}. In several variables we have a similar result:
\begin{theo}[\cite{AJ}, Theorem 2.16]
\label{TheoLangenbruchSeveralVariables}
Let $\omega:[0,\infty[ \to [0,\infty[$ be a continuous and increasing function satisfying the conditions $(\alpha)$, $(\gamma)$, and $(\delta)$ of Definition~\ref{DefinitionWeightFunction}. Then there are a function $G \in \mathcal{H}(\mathbb{C}^{d})$ and some constants $C_1,C_2,C_3,C_4>0$ such that
\begin{enumerate}
\item[i')] $\displaystyle \log{|G(z)|} \leq \omega(z) + C_1, \quad z \in \mathbb{C}^{d}$;
\item[ii')] $\displaystyle \log{|G(z)|} \geq C_2\omega(z) - C_4, \quad z \in \widetilde{U}:= \{ z \in \mathbb{C}^{d}: |\Im(z)| \leq C_3(|\Re(z)| + 1) \}$.
\end{enumerate}
\end{theo}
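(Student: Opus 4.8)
The plan is to deduce the $d$-dimensional statement from the one–variable case, due to Braun~\cite{Br} and Langenbruch~\cite{Lan}, by substituting the \emph{complex square} $q(z):=z_{1}^{2}+\cdots+z_{d}^{2}$ for $w^{2}$ in the one–variable solution. Recall that the one–variable construction may be taken to produce an \emph{even} entire function of canonical product type,
\[
g(w)=\prod_{j\ge1}\Bigl(1+\frac{w^{2}}{r_{j}^{2}}\Bigr),\qquad w\in\mathbb{C},
\]
with $0<r_{1}\le r_{2}\le\cdots\to\infty$ and $\sum_{j}r_{j}^{-2}<\infty$, chosen so that
\[
h(r):=\log g(r)=\sum_{j\ge1}\log\Bigl(1+\frac{r^{2}}{r_{j}^{2}}\Bigr),\qquad r\ge0,
\]
satisfies $c_{2}\,\omega(r)-c_{4}\le h(r)\le\omega(r)+c_{1}$ for some $c_{1},c_{2},c_{4}>0$ (the convergence $\sum_{j}r_{j}^{-2}<\infty$ being forced by $h(1)<\infty$). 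Building such a sequence is the analytic core of the one–variable theorem: condition $(\delta)$ gives meaning to the increasing function $\varphi'_{\omega}$ and lets one take $\#\{j:r_{j}\le t\}$ essentially equal to $\lfloor\varphi'_{\omega}(\log t)\rfloor$, so that the leading contribution to $h(r)$ is $\int_{0}^{r}\#\{j:r_{j}\le t\}\,t^{-1}\,dt=\varphi_{\omega}(\log r)=\omega(r)$, while $(\gamma)$ and $(\alpha)$ control the remaining tail and the comparison constants. Since $g$ is even, we may then \emph{define}
\[
G(z):=\prod_{j\ge1}\Bigl(1+\frac{q(z)}{r_{j}^{2}}\Bigr)\in\mathcal{H}(\mathbb{C}^{d}),
\]
which is just $g$ when $d=1$; it converges locally uniformly on $\mathbb{C}^{d}$ since $|q(z)|\le|z_{1}|^{2}+\cdots+|z_{d}|^{2}=|z|^{2}$ and $\sum_{j}r_{j}^{-2}<\infty$.

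Property i') is immediate from $|q(z)|\le|z|^{2}$ and monotonicity:
\[
\log|G(z)|\le\sum_{j\ge1}\log\Bigl(1+\frac{|q(z)|}{r_{j}^{2}}\Bigr)\le\sum_{j\ge1}\log\Bigl(1+\frac{|z|^{2}}{r_{j}^{2}}\Bigr)=h(|z|)\le\omega(z)+c_{1},
\]
so $C_{1}:=c_{1}$ works.

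For ii') the point is to choose $C_{3}$ small, depending only on $r_{1}$. On $\widetilde{U}=\{z:|\Im(z)|\le C_{3}(|\Re(z)|+1)\}$ one has $\Re\,q(z)=|\Re(z)|^{2}-|\Im(z)|^{2}\ge-C_{3}^{2}/(1-C_{3}^{2})$, by minimising $u^{2}-C_{3}^{2}(u+1)^{2}$; hence if $C_{3}$ is so small that $r_{1}^{2}>C_{3}^{2}/(1-C_{3}^{2})$, then $\Re\bigl(1+q(z)/r_{j}^{2}\bigr)>0$ for every $j$ and every $z\in\widetilde{U}$. In particular $G$ has no zeros on $\widetilde{U}$, and using $\log|w|\ge\log\Re w$ whenever $\Re w>0$,
\[
\log|G(z)|=\sum_{j\ge1}\log\Bigl|1+\frac{q(z)}{r_{j}^{2}}\Bigr|\ge\sum_{j\ge1}\log\Bigl(1+\frac{\Re\,q(z)}{r_{j}^{2}}\Bigr).
\]
Shrinking $C_{3}$ further so that also $C_{3}<1/2$, for $|\Re(z)|\ge1$ we have $(|\Re(z)|+1)^{2}\le4|\Re(z)|^{2}$, hence $\Re\,q(z)\ge\gamma|\Re(z)|^{2}$ with $\gamma:=1-4C_{3}^{2}\in(0,1)$, and also $|z|^{2}\le(1+4C_{3}^{2})|\Re(z)|^{2}\le4|\Re(z)|^{2}$. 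By the elementary inequality $\log(1+\gamma t)\ge\gamma\log(1+t)$ (valid for $t\ge0$, $0<\gamma\le1$), the lower estimate for $h$, and condition $(\alpha)$ in the form $\omega(|z|)\le\omega(2|\Re(z)|)\le L(\omega(|\Re(z)|)+1)$,
\[
\log|G(z)|\ge\gamma\sum_{j\ge1}\log\Bigl(1+\frac{|\Re(z)|^{2}}{r_{j}^{2}}\Bigr)=\gamma\,h(|\Re(z)|)\ge\gamma c_{2}\,\omega(|\Re(z)|)-\gamma c_{4}\ge\frac{\gamma c_{2}}{L}\,\omega(z)-\gamma(c_{2}+c_{4}),
\]
which is ii') on $\widetilde{U}\cap\{|\Re(z)|\ge1\}$ with $C_{2}:=\gamma c_{2}/L$. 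On $\widetilde{U}\cap\{|\Re(z)|\le1\}$, which is compact and on which $G$ does not vanish, $\log|G|$ is bounded below and $\omega(z)=\omega(|z|)$ is bounded above, so ii') persists there after enlarging $C_{4}$.

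The main obstacle is the one–variable input, namely the construction of $(r_{j})$ with $h\asymp\omega$; this is exactly where $(\gamma)$ and $(\delta)$ (and $(\alpha)$, for convergence and for the uniformity of the constants — the absence of $(\beta)$ being harmless) are used, and it is the ingredient imported from \cite{Br,Lan}. The only genuinely several–variable ideas are the substitution of $q(z)=z_{1}^{2}+\cdots+z_{d}^{2}$ for $w^{2}$ — legitimate precisely because $g$ is even — together with the two elementary facts $|q(z)|\le|z|^{2}$ everywhere and $\Re\,q(z)\ge\gamma|\Re(z)|^{2}$ on a thin enough tube; the rest is bookkeeping with $(\alpha)$ and a compactness argument near the origin.
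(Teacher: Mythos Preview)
The paper does not prove this theorem: it is quoted verbatim from \cite[Theorem~2.16]{AJ} and simply invoked as a known result (with the one-variable antecedents attributed to Braun~\cite{Br} and Langenbruch~\cite{Lan}), so there is no in-paper argument to compare against.

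Your proof is correct and is, in fact, the standard route to the several-variable statement: pull back the even canonical product $g(w)=\prod_j(1+w^2/r_j^2)$ through the quadratic form $q(z)=z_1^2+\cdots+z_d^2$. The two observations that drive everything---$|q(z)|\le |z|^2$ for the upper bound, and $\Re q(z)=|\Re z|^2-|\Im z|^2$ for the lower bound on a sufficiently thin tube---are exactly right, and your bookkeeping (the minimisation giving $\Re q(z)\ge -C_3^2/(1-C_3^2)$, the concavity inequality $\log(1+\gamma t)\ge\gamma\log(1+t)$, the use of $(\alpha)$ to pass from $\omega(|\Re z|)$ to $\omega(|z|)$, and the compactness argument near the origin) is clean. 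The one point you take on faith is that the one-variable solution can be taken as an \emph{even} canonical product with $h(r)\asymp\omega(r)$; this is indeed how Langenbruch's construction proceeds, so the import is legitimate. This is almost certainly the same argument given in~\cite{AJ}.
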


We also need the notion of $\omega$-\emph{ultradifferential operator} with constant coefficients.  Let $G$ be an entire function in $\mathbb{C}^d$ with $\log|G| = O(\omega)$. For $\varphi \in \mathcal{E}_{\omega}(\mathbb{R}^{d})$, the map $T_{G}:\mathcal{E}_{\omega}(\mathbb{R}^{d}) \to \C$ given by
\[ T_{G}(\varphi) := \sum_{\alpha \in \mathbb{N}_{0}^{d}}  \frac{D^{\alpha}G(0)}{\alpha!} D^{\alpha}\varphi(0) \]
defines an ultradistribution $T_{G} \in \mathcal{E}'_{\omega}(\mathbb{R}^{d})$ with support equal to $\{0\}$. The convolution operator $G(D): \mathcal{D}'_{\omega}(\mathbb{R}^{d}) \to \mathcal{D}'_{\omega}(\mathbb{R}^{d})$ defined by $G(D)(\mu) = T_{G} \ast \mu$ is said to be an ultradifferential operator of $\omega$-class. 
\begin{prop}\label{PropPropertiesUltradiffOperator}
Let $G$ be the entire function given in~Theorem~\ref{TheoLangenbruchSeveralVariables} and $n \in \mathbb{N}$. If
\[ G^n(z) = \sum_{\alpha \in \mathbb{N}_0^n} b_{\alpha} z^{\alpha}, \qquad z \in \mathbb{C}^d \]
denotes the $n$-th power of $G$, then there exist $C,K>0$ such that
\begin{align*}
|b_{\alpha}| &\leq e^{nC} e^{-nC \varphi^{\ast}\big(\frac{|\alpha|}{nC}\big)}, \quad \alpha\in\N_0^d; \\
|G^n(\xi)| &\geq C^{-n} e^{nK\omega(\xi)}, \quad \xi \in \mathbb{R}^d.
\end{align*}
\end{prop}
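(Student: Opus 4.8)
The plan is to prove the two estimates separately, both stemming from the growth properties of $G$ in Theorem~\ref{TheoLangenbruchSeveralVariables}, combined with the Cauchy estimates for Taylor coefficients and the scaling lemma for $\varphi^\ast$ (Lemma~\ref{lemmafistrella}).

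For the coefficient bound, I would start from the Cauchy integral formula on a polydisc of radius $r>0$ centered at the origin: $|b_\alpha| \le r^{-|\alpha|} \sup_{|z_j| = r} |G^n(z)|$. By Theorem~\ref{TheoLangenbruchSeveralVariables}(i'), $|G(z)| \le e^{\omega(z) + C_1}$, hence $|G^n(z)| \le e^{n\omega(z) + nC_1}$, and on the polydisc of polyradius $r$ we have $|z| \le \sqrt{d}\, r$, so after using condition $(\alpha)$ (to absorb the $\sqrt d$ into a new constant times $\omega(r)$ plus a constant) we get $|b_\alpha| \le e^{-|\alpha|\log r}\, e^{n L'(\omega(r)+1) + nC_1}$ for a suitable $L'$. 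Now optimize over $r$: writing $r = e^s$ and recalling $\varphi_\omega(s) = \omega(e^s)$, the exponent becomes $-|\alpha| s + nL'\varphi_\omega(s) + (\text{const})$, and minimizing in $s\ge 0$ produces $-nL'\varphi_\omega^\ast\!\big(\frac{|\alpha|}{nL'}\big)$ by definition of the Young conjugate. Absorbing $L'$ and the additive constants into a single $C>0$ (chosen large enough, and using $\varphi^\ast(0)=0$ for the small-$|\alpha|$ regime), this yields $|b_\alpha| \le e^{nC} e^{-nC\varphi^\ast(|\alpha|/(nC))}$. The only mild care needed is that $\varphi^\ast$ appears with the argument scaled by the same constant that multiplies it outside, which is exactly the shape on the right-hand side, so no further manipulation via Lemma~\ref{lemmafistrella} is strictly necessary here, though it could be invoked to adjust the constant.

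For the lower bound on $|G^n(\xi)|$ with $\xi \in \mathbb{R}^d$, I observe that real points lie in the region $\widetilde U$ of Theorem~\ref{TheoLangenbruchSeveralVariables}(ii') (since $\Im(\xi) = 0$), so $\log|G(\xi)| \ge C_2\omega(\xi) - C_4$, hence $|G(\xi)| \ge e^{C_2\omega(\xi) - C_4}$ and therefore $|G^n(\xi)| \ge e^{nC_2\omega(\xi) - nC_4} = e^{-nC_4} e^{nC_2\omega(\xi)}$. Taking $C \ge e^{C_4}$ and $K = C_2$ (shrinking $K$ or enlarging $C$ if needed to reconcile with the choice of $C$ made in the first part) gives $|G^n(\xi)| \ge C^{-n} e^{nK\omega(\xi)}$, as claimed. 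The two parts are then reconciled by choosing a single constant $C$ that is simultaneously large enough for both inequalities, and a single $K>0$ accordingly.

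I expect the main (minor) obstacle to be bookkeeping: ensuring that one and the same pair of constants $C, K$ works for both estimates, and handling the passage from the Euclidean norm $|z|$ on the polydisc to $r$ via condition $(\alpha)$ without spoiling the Legendre-transform structure. The substantive mathematical input — the two-sided control of $\log|G|$ — is entirely supplied by Theorem~\ref{TheoLangenbruchSeveralVariables}; the rest is the standard Cauchy-estimate-plus-optimization argument that converts a weight bound on an entire function into a $\varphi^\ast$-type bound on its Taylor coefficients.
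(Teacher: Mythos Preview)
Your proposal is correct and is precisely the standard argument one expects here. Note, however, that the paper states this proposition without proof: it is recorded as an immediate consequence of Theorem~\ref{TheoLangenbruchSeveralVariables} (combined with Cauchy estimates and the definition of $\varphi^{\ast}$), so there is no ``paper's own proof'' to compare against beyond the implicit derivation you have reconstructed. Your treatment of both parts---Cauchy estimates on a polydisc followed by optimization in $r=e^{s}$ to produce the Young conjugate, and the direct use of (ii') on $\mathbb{R}^d\subset\widetilde{U}$---is exactly what is intended, and your remark that a single sufficiently large $C$ can serve both estimates (using that $\lambda\mapsto\lambda\varphi^{\ast}(t/\lambda)$ is nonincreasing, since $\varphi^{\ast}(s)/s$ is increasing) is the right way to reconcile the constants.
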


The following result characterizes those operators whose kernel is a function in $\Sch_\omega(\R^d)$. These operators are fundamental to understand the symbolic calculus. The proof is standard.
\begin{prop}\label{PropExampleExtensionPseudodifferentialOperator}
Let $T:\mathcal{S}_{\omega}(\R^{d}) \to \mathcal{S}_{\omega}(\R^{d})$ be a pseudodifferential operator. The following assertions are equivalent:
\begin{enumerate}
\item[$(1)$] $T$ has a linear and continuous extension $\widetilde{T}:\mathcal{S}'_{\omega}(\R^{d}) \to \mathcal{S}_{\omega}(\R^{d})$.
\item[$(2)$] There exists $K(x,y) \in \mathcal{S}_{\omega}(\mathbb{R}^{2d})$ such that
\[ (T\varphi)(x) = \int K(x,y) \varphi(y) dy, \qquad \varphi \in \mathcal{S}_{\omega}(\R^{d}). \]
\end{enumerate}
\end{prop}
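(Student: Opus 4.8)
The plan is to prove the two implications separately; in both directions the only genuine work is the bookkeeping of parameters that translates membership in $\Sch_\omega$ into the weighted derivative estimates of Lemma~\ref{LemmaEquivalence} (used in dimension $d$ and in dimension $2d$), with the help of the convexity properties of $\varphi^\ast$ from Lemma~\ref{lemmafistrella} and of the elementary two-sided comparison
\[ \tfrac12\big(\omega(x)+\omega(y)\big)\ \leq\ \omega(x,y)\ \leq\ L\big(\omega(x)+\omega(y)+1\big),\qquad x,y\in\R^d, \]
which follows from the monotonicity of $\omega$ and from~\eqref{EqOmegaDiff}.

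For $(2)\Rightarrow(1)$, given $K\in\Sch_\omega(\R^{2d})$, I would set $(\widetilde Tu)(x):=\langle u,K(x,\cdot)\rangle$ for $u\in\Sch'_\omega(\R^d)$. One first checks that $K(x,\cdot)\in\Sch_\omega(\R^d)$ for every $x$ (immediate from the $\Sch_\omega(\R^{2d})$-estimates and $\omega(x,y)\geq\omega(y)$) and that $x\mapsto K(x,\cdot)$ is a $C^\infty$ map from $\R^d$ into $\Sch_\omega(\R^d)$ with $D_x^\alpha\big(K(x,\cdot)\big)=(D_x^\alpha K)(x,\cdot)$, by an elementary Taylor argument using the uniform estimates for $K$; hence $\widetilde Tu\in C^\infty(\R^d)$ and $D^\alpha(\widetilde Tu)(x)=\langle u,(D_x^\alpha K)(x,\cdot)\rangle$. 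Since $u$ is continuous, there are $\lambda_0,\mu_0,C_u>0$ with
\[ |\langle u,\psi\rangle|\ \leq\ C_u\,\sup_{\beta,y}\,|D^\beta\psi(y)|\,e^{-\lambda_0\varphi^\ast(|\beta|/\lambda_0)}\,e^{\mu_0\omega(y)},\qquad \psi\in\Sch_\omega(\R^d), \]
the topology of $\Sch_\omega$ being generated by such seminorms (Lemma~\ref{LemmaEquivalence}). Inserting the estimate $|D_x^\alpha D_y^\beta K(x,y)|\leq C_{\Lambda,M}\,e^{\Lambda\varphi^\ast(|\alpha+\beta|/\Lambda)}\,e^{-M\omega(x,y)}$, using convexity of $\varphi^\ast$ to split $\Lambda\varphi^\ast(|\alpha+\beta|/\Lambda)\leq\tfrac\Lambda2\varphi^\ast(\tfrac{2|\alpha|}\Lambda)+\tfrac\Lambda2\varphi^\ast(\tfrac{2|\beta|}\Lambda)$ and $\omega(x,y)\geq\tfrac12(\omega(x)+\omega(y))$ to split the weight, and choosing $\Lambda=2\max(\lambda_0,\tilde\lambda)$ and $M=2\max(\mu_0,\tilde\mu)$ for a prescribed pair $\tilde\lambda,\tilde\mu>0$, the $\beta$- and $y$-contributions get absorbed (here one uses that $t\mapsto t\varphi^\ast(s/t)$ is decreasing, a consequence of $\varphi^\ast(x)/x$ being increasing), leaving $|D^\alpha(\widetilde Tu)(x)|\leq C\,e^{\tilde\lambda\varphi^\ast(|\alpha|/\tilde\lambda)}\,e^{-\tilde\mu\omega(x)}$. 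Since these bounds force rapid polynomial decay of all derivatives, $\widetilde Tu\in\Sch(\R^d)$, hence $\widetilde Tu\in\Sch_\omega(\R^d)$ by Lemma~\ref{LemmaEquivalence}. Keeping track of the dependence of $C$ on $C_u$ — equivalently, using that $\Sch'_\omega$ is a (DFS) space, so that a linear map out of it is continuous as soon as it maps bounded sets to bounded sets — gives continuity of $\widetilde T:\Sch'_\omega(\R^d)\to\Sch_\omega(\R^d)$. Finally, for $\varphi\in\Sch_\omega(\R^d)$ identified with the $\omega$-temperate ultradistribution $\psi\mapsto\int\varphi\psi$, one obtains $(\widetilde T\varphi)(x)=\int K(x,y)\varphi(y)\,dy$, which equals $(T\varphi)(x)$ since $\widetilde T$ extends $T$; so $(1)$ holds, with $K$ as kernel.

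For the converse $(1)\Rightarrow(2)$, given a continuous extension $\widetilde T:\Sch'_\omega(\R^d)\to\Sch_\omega(\R^d)$ of $T$, I would set $K(x,y):=(\widetilde T\delta_y)(x)$, where $\delta_y\in\mathcal{E}'_\omega(\R^d)\subseteq\Sch'_\omega(\R^d)$ is the Dirac mass at $y$. By an elementary Taylor expansion, $y\mapsto\delta_y$ is a $C^\infty$ map from $\R^d$ into $\Sch'_\omega(\R^d)$ with $\langle D_y^\beta\delta_y,\varphi\rangle=(D^\beta\varphi)(y)$; composing with the continuous linear map $\widetilde T$ and the embedding $\Sch_\omega(\R^d)\hookrightarrow C^\infty(\R^d)$ shows that $K$ is jointly $C^\infty$ with $D_x^\alpha D_y^\beta K(x,y)=D_x^\alpha\big(\widetilde T(D_y^\beta\delta_y)\big)(x)$. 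For each $\lambda,\mu>0$ the family $\big\{e^{\mu\omega(y)}e^{-\lambda\varphi^\ast(|\beta|/\lambda)}D_y^\beta\delta_y:\beta\in\N_0^d,\ y\in\R^d\big\}$ is bounded in $\Sch'_\omega(\R^d)$: testing against $\psi$ in a bounded subset of $\Sch_\omega(\R^d)$ and applying Lemma~\ref{LemmaEquivalence} (uniformly over that bounded set) bounds $e^{\mu\omega(y)}e^{-\lambda\varphi^\ast(|\beta|/\lambda)}|(D^\beta\psi)(y)|$. Since $\widetilde T$ maps bounded sets to bounded sets, the images $\big\{e^{\mu\omega(y)}e^{-\lambda\varphi^\ast(|\beta|/\lambda)}\,\widetilde T(D_y^\beta\delta_y)\big\}$ form a bounded subset of $\Sch_\omega(\R^d)$, which, by the seminorm description of $\Sch_\omega$, unwinds to
\[ |D_x^\alpha D_y^\beta K(x,y)|\ \leq\ C_{\lambda,\mu,\nu,\kappa}\,e^{\nu\varphi^\ast(|\alpha|/\nu)}\,e^{\lambda\varphi^\ast(|\beta|/\lambda)}\,e^{-\kappa\omega(x)}\,e^{-\mu\omega(y)} \]
for all $\nu,\kappa>0$. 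Taking $\nu=\lambda$ and merging the two $\varphi^\ast$-factors into $e^{\lambda\varphi^\ast(|\alpha+\beta|/\lambda)}$ by~\eqref{EqEstimationVarphi}, and taking $\kappa=\mu=ML$ so that $e^{-ML(\omega(x)+\omega(y))}\leq e^{ML}e^{-M\omega(x,y)}$, one gets $|D_x^\alpha D_y^\beta K(x,y)|\leq C'_{\lambda,M}\,e^{\lambda\varphi^\ast(|\alpha+\beta|/\lambda)}\,e^{-M\omega(x,y)}$ for all $\lambda,M>0$; since $\omega$-decay implies Schwartz decay, Lemma~\ref{LemmaEquivalence} in dimension $2d$ yields $K\in\Sch_\omega(\R^{2d})$. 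Lastly, that $K$ represents $T$ follows from reflexivity of $\Sch_\omega$: for fixed $x$ the continuous functional $u\mapsto(\widetilde Tu)(x)$ on $\Sch'_\omega(\R^d)$ is represented by some $g_x\in\Sch_\omega(\R^d)$; testing with $u=\delta_y$ identifies $g_x=K(x,\cdot)$, and testing with $u=\varphi\in\Sch_\omega(\R^d)$ gives $(T\varphi)(x)=(\widetilde T\varphi)(x)=\int K(x,y)\varphi(y)\,dy$.

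The conceptual content here is minimal, so the main obstacle is purely technical: in both directions one must pass between the two-variable estimates — with separate weights in $x$ and $y$ and separate orders $|\alpha|$ and $|\beta|$ — and the single joint estimate that Lemma~\ref{LemmaEquivalence} requires on $\R^{2d}$. This forces careful use of the inequalities for $\varphi^\ast$ from Lemma~\ref{lemmafistrella}: splitting $\varphi^\ast(|\alpha+\beta|/\cdot)$ via convexity in one direction, merging two $\varphi^\ast$-terms via~\eqref{EqEstimationVarphi} in the other, together with the decreasing behaviour of $t\mapsto t\varphi^\ast(s/t)$; and careful use of the comparison between $\omega(x,y)$ and $\omega(x)+\omega(y)+1$ above. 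One should also check that the vector-valued differentiations, the boundedness arguments and the representation by reflexivity are legitimate, but these rest only on standard facts about the nuclear Fr\'echet space $\Sch_\omega(\R^d)$ and its (DFS) strong dual $\Sch'_\omega(\R^d)$.
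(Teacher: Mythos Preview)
Your proof is correct and complete. The paper does not actually prove this proposition, remarking only that ``the proof is standard''; your argument---defining $\widetilde Tu(x)=\langle u,K(x,\cdot)\rangle$ in one direction and $K(x,y)=(\widetilde T\delta_y)(x)$ in the other, then verifying the $\Sch_\omega$-estimates via Lemma~\ref{LemmaEquivalence}, the convexity inequalities of Lemma~\ref{lemmafistrella}, and reflexivity of $\Sch_\omega$---is exactly the standard one the paper has in mind.
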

Any operator $T:\mathcal{S}_{\omega}(\mathbb{R}^d) \to \mathcal{S}_{\omega}(\mathbb{R}^d)$ which satisfies (1) or (2) of Proposition~\ref{PropExampleExtensionPseudodifferentialOperator} is called \emph{$\omega$-regularizing}.

\section{Symbolic calculus for quantizations}\label{SectionSymbolicCalculus}
We generalize the symbolic calculus developed in~\cite{AJ} for quantizations.
\begin{defin}\label{DefFormalSums}
We define $\FGS^{m,\omega}_{\rho}$ to be the set of all formal sums $\sum_{j \in \mathbb{N}_0} a_j(x,\xi)$ such that $a_j(x,\xi) \in C^{\infty}(\mathbb{R}^{2d})$ and there is $R\geq 1$ such that for every $n \in \mathbb{N}$ there exists $C_n>0$ with
\[ |D^{\alpha}_x D^{\beta}_{\xi} a_j(x,\xi)| \leq C_n \langle(x,\xi)\rangle^{-\rho(|\alpha+\beta|+j)} e^{n\rho \varphi^{\ast}\big(\frac{|\alpha+\beta|+j}{n}\big)} e^{m\omega(x,\xi)}, \]
for each $j \in \mathbb{N}_0$, $\alpha,\beta \in \mathbb{N}_0^d$, and $\log\big(\frac{\langle(x,\xi)\rangle}{R}\big) \geq \frac{n}{j}\varphi^{\ast}\big(\frac{j}{n}\big)$.
\end{defin}

\begin{defin}\label{DefEqFormalSums}
Two formal sums $\sum a_j$ and $\sum b_j$ in $\FGS_{\rho}^{m,\omega}$ are said to be \emph{equivalent}, denoted by $\sum a_j \sim \sum b_j$, if there is $R\geq 1$ such that for each $n \in \mathbb{N}$ there exist $C_n>0$ and $N_n \in \mathbb{N}$ with
\[ \Big|D^{\alpha}_x D^{\beta}_{\xi} \sum_{j < N}(a_j - b_j)\Big| \leq C_n \langle(x,\xi)\rangle^{-\rho(|\alpha+\beta|+N)} e^{n\rho \varphi^{\ast}\big(\frac{|\alpha+\beta|+N}{n}\big)} e^{m\omega(x,\xi)}, \]
for every $N \geq N_n$, $\alpha,\beta \in \mathbb{N}_{0}^d$, and $\log\big(\frac{\langle(x,\xi)\rangle}{R}\big) \geq \frac{n}{N} \varphi^{\ast}\big(\frac{N}{n} \big)$.
\end{defin}

The following construction has been carried out in~\cite{AJ} following the lines of~\cite[Theorem~3.7]{FGJ2005pseudo}. Let $\Phi$ be a compactly supported ultradifferentiable function of Beurling type in the space $\mathcal{D}_{\sigma}(\mathbb{R}^{2d})$, where $\sigma$ is a weight function which satisfies $\omega(t^{1/\rho})=O(\sigma(t))$,  as $t\to+\infty$, and
\begin{equation}\label{EqDefinitionPhi}
|\Phi(t)| \leq 1, \qquad \Phi(t)=1 \ \text{if} \ |t| \leq 2, \qquad \Phi(t)=0 \ \text{if} \ |t| \geq 3.
\end{equation}
Let $(j_n)_{n}$ be a sequence of natural numbers such that $j_n/n \to \infty$ as $n$ tends to infinity. For each $j_n \leq j < j_{n+1}$, we set
\begin{equation}\label{psi-function}
\varphi_j(x,\xi) := 1 - \Phi\Big(\frac{(x,\xi)}{A_{n,j}}\Big), \qquad A_{n,j}=Re^{\frac{n}{j}\varphi_{\omega}^{\ast}(\frac{j}{n})},
\end{equation}
where $R\geq 1$ is the constant which appears in Definition~\ref{DefFormalSums}. It is understood that $\varphi_0=1$. We have shown in \cite{AJ} that $\varphi_j \in \GS^{0,\omega}_{\rho}$. Moreover, if $\sum_j a_j \in \FGS^{m,\omega}_{\rho}$ then, by~\cite[Theorem 4.6]{AJ},
\[ a(x,\xi) := \sum_{j=0}^{\infty} \varphi_j(x,\xi) a_j(x,\xi) \]
is a global symbol in $\GS^{m,\omega}_{\rho}$, equivalent to $\sum_j a_j$ in $\FGS^{m,\omega}_{\rho}$.

Now, we extend some results in~\cite{AJ} for quantizations. In what follows, $\tau$ stands for a real number. Let $k \in \mathbb{N}_0$ denote the minimum natural number satisfying
\begin{equation}\label{EqDefinitionk}
|\tau|+|1-\tau| \leq 2^k.
\end{equation}
Furthermore, for any $m \in \mathbb{R}$ we denote
\begin{equation}\label{EqDefinitionmprime}
m' = mL^k,
\end{equation}
where $L\geq 1$ is the constant of Lemma~\ref{lemmafistrella}. We observe that $m'=m$ if and only if $0 \leq \tau \leq 1$.

\begin{lema}\label{LemmaSymbolAmplitudeTau}
If $b(x,\xi) \in \GS^{m,\omega}_{\rho}$ and $\tau \in \mathbb{R}$, then
\[ a(x,y,\xi) := b( (1-\tau)x + \tau y, \xi) \]
is a global amplitude in $\GA^{\max\{0,m'\},\omega}_{\rho}$.
\end{lema}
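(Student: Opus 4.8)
The plan is to substitute $a(x,y,\xi) = b((1-\tau)x+\tau y,\xi)$ into the defining estimate of $\GA^{\max\{0,m'\},\omega}_{\rho}$ and reduce everything to the known estimate for $b \in \GS^{m,\omega}_{\rho}$. First I would record that by the chain rule, since $(1-\tau)x+\tau y$ is linear in $(x,y)$,
\[
D_x^{\alpha} D_y^{\beta} D_{\xi}^{\gamma} a(x,y,\xi) = (1-\tau)^{\alpha}\, \tau^{\beta}\, (D_u^{\alpha+\beta} D_{\xi}^{\gamma} b)\big((1-\tau)x+\tau y, \xi\big),
\]
so that $|D_x^{\alpha} D_y^{\beta} D_{\xi}^{\gamma} a(x,y,\xi)| \le |1-\tau|^{|\alpha|}|\tau|^{|\beta|}\, |(D_u^{\alpha+\beta}D_{\xi}^{\gamma}b)(u,\xi)|$ with $u = (1-\tau)x+\tau y$. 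Applying Definition~\ref{gs} to $b$ at the point $(u,\xi)$ with the index $(\alpha+\beta,\gamma)$ gives, for each $n$, a bound by $C_n |1-\tau|^{|\alpha|}|\tau|^{|\beta|} \langle(u,\xi)\rangle^{-\rho|\alpha+\beta+\gamma|} e^{n\rho\varphi^{\ast}(|\alpha+\beta+\gamma|/n)} e^{m\omega(u,\xi)}$.

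The second step is to control the three places where the argument $u$ appears: the factors $|1-\tau|^{|\alpha|}|\tau|^{|\beta|}$, the power $\langle(u,\xi)\rangle^{-\rho|\alpha+\beta+\gamma|}$, and the weight $e^{m\omega(u,\xi)}$. For the scalar factors, I would use $|1-\tau|^{|\alpha|}|\tau|^{|\beta|} \le (|\tau|+|1-\tau|)^{|\alpha+\beta|} \le 2^{k|\alpha+\beta|}$ by~\eqref{EqDefinitionk}; this factor $2^{k|\alpha+\beta+\gamma|}$ can then be absorbed into the $e^{n\rho\varphi^{\ast}}$-term after rescaling $n$, using Lemma~\ref{lemmafistrella}(1) (the inequality~\eqref{EqLLambdaVarphi} is exactly designed to trade a geometric factor $c^{|\alpha|}$ against a shift in the index of $\varphi^{\ast}$). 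For the weight $e^{m\omega(u,\xi)}$, I would compare $\omega(u,\xi)$ with $\omega(x,y,\xi)$: since $|u| = |(1-\tau)x+\tau y| \le |1-\tau||x| + |\tau||y| \le 2^k(|x|+|y|)$, repeated use of property $(\alpha)$ (in the form~\eqref{EqOmegaDiff}, iterated $k$ times) gives $\omega(u) \le L^k \omega(x,y) + (\text{const})$, hence $\omega(u,\xi) \le L^k\omega(x,y,\xi) + (\text{const})$ after adjusting the equivalence constants between $\omega(u,\xi)$ and $\omega(u)+\omega(\xi)$. If $m \ge 0$ this yields $e^{m\omega(u,\xi)} \le C e^{mL^k\omega(x,y,\xi)} = C e^{m'\omega(x,y,\xi)}$; if $m < 0$ then $m' < 0 \le \max\{0,m'\}$, and we instead use the lower comparison $\omega(u,\xi) \ge A(\ldots)$ in the opposite direction — or more simply just bound $e^{m\omega(u,\xi)} \le C$ and use order $0$; either way we land in $\GA^{\max\{0,m'\},\omega}_{\rho}$.

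The remaining and slightly more delicate point is the power $\langle(u,\xi)\rangle^{-\rho|\alpha+\beta+\gamma|}$, which must be converted into $\langle x-y\rangle^{\rho|\alpha+\beta+\gamma|}/\langle(x,y,\xi)\rangle^{\rho|\alpha+\beta+\gamma|}$ as required by Definition~\ref{ga}. Equivalently, raising to the power $-\rho|\alpha+\beta+\gamma|$ (and noting the exponent is $\le 0$), it suffices to prove a pointwise inequality of the shape
\[
\langle(x,y,\xi)\rangle \le C\,\langle x-y\rangle\,\langle(u,\xi)\rangle, \qquad u=(1-\tau)x+\tau y,
\]
for some constant $C$ depending only on $\tau,d$. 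This is an elementary but real computation: one writes $x = u + \tau(x-y)$ and $y = u - (1-\tau)(x-y)$, so $|x| \le |u| + |\tau||x-y|$ and $|y| \le |u| + |1-\tau||x-y|$, whence $|x|^2+|y|^2+|\xi|^2 \lesssim |u|^2 + |\xi|^2 + |x-y|^2 \le \langle(u,\xi)\rangle^2 \langle x-y\rangle^2$; adding the $1$'s and taking square roots gives the claim. I expect this last inequality — the geometric comparison of $\langle(x,y,\xi)\rangle$ with $\langle x-y\rangle\langle(u,\xi)\rangle$ — to be the main obstacle in the sense of being the only step that is not purely formal bookkeeping, though it is standard (it is the analogue of the inequality underlying the change of quantization in Shubin's calculus). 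Combining the three controls and relabeling the constant $C_n$ yields precisely the estimate of Definition~\ref{ga} with order $\max\{0,m'\}$, completing the proof. $\qed$
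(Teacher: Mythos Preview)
Your proposal is correct and follows essentially the same route as the paper: the paper also records the chain rule, states the geometric inequality $\langle(x,y,\xi)\rangle \le \sqrt{6}\langle\tau\rangle\,\langle x-y\rangle\,\langle((1-\tau)x+\tau y,\xi)\rangle$ (your ``delicate point''), absorbs the resulting powers of $|\tau|$, $|1-\tau|$ and the geometric constant into the $\varphi^{\ast}$-term via Lemma~\ref{lemmafistrella}(1), and then uses property $(\alpha)$ of $\omega$ exactly $k$ times (as in~\eqref{EqWeightTau}) to pass from $e^{m\omega((1-\tau)x+\tau y,\xi)}$ to $e^{m'\omega(x,y,\xi)}$ when $m\ge 0$ (bounding by $1$ when $m<0$). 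The only cosmetic difference is that the paper packages the absorption constants into a single $\widetilde{p}$ with $\max\{|1-\tau|,|\tau|,(\sqrt{6}\langle\tau\rangle)^{\rho}\}\le e^{\rho\widetilde{p}}$ rather than your $2^{k}$.
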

\begin{proof}
The following inequality is easy to check:
\[ \langle(x,y,\xi)\rangle \leq \sqrt{6}\langle \tau \rangle \langle x-y \rangle \langle((1-\tau)x+\tau y, \xi)\rangle, \qquad x,y,\xi \in \mathbb{R}^d, \  \tau \in \mathbb{R}. \]
We take $\widetilde{p} \in \mathbb{N}$ such that $\max\{ |1-\tau|, |\tau|, (\sqrt{6}\langle \tau \rangle)^{\rho} \} \leq e^{\rho \widetilde{p}}$. By assumption, for all $\lambda>0$ there exists $C_{\lambda}>0$ such that ($L\geq 1$ is the constant of Lemma~\ref{lemmafistrella})
\begin{align*}
|D^{\alpha}_x D^{\beta}_y D^{\gamma}_{\xi} a(x,y,\xi)| &\leq |1-\tau|^{|\alpha|} |\tau|^{|\beta|} C_{\lambda} \langle((1-\tau)x+\tau y, \xi)\rangle^{-\rho|\alpha+\beta+\gamma|} \times \\
& \quad \times e^{\lambda L^{2\widetilde{p}} \rho \varphi^{\ast}\big(\frac{|\alpha+\beta+\gamma|}{\lambda L^{2\widetilde{p}}}\big)} e^{m\omega((1-\tau)x+\tau y,\xi)}.
\end{align*}
The choice of $\widetilde{p}$ gives $|1-\tau|^{|\alpha|} |\tau|^{|\beta|} (\sqrt{6}\langle \tau \rangle)^{\rho|\alpha+\beta+\gamma|} \leq e^{2\widetilde{p}\rho|\alpha+\beta+\gamma|}$. Then, by~\eqref{EqLLambdaVarphi}, we get 
\[ \big[ e^{2\widetilde{p}|\alpha+\beta+\gamma|} e^{\lambda L^{2\widetilde{p}} \varphi^{\ast}\big(\frac{|\alpha+\beta+\gamma|}{\lambda L^{2\widetilde{p}}}\big)} \big]^{\rho} \leq e^{\lambda \rho \varphi^{\ast}\big(\frac{|\alpha+\beta+\gamma|}{\lambda}\big)} e^{\lambda \rho \sum_{j=1}^{2\widetilde{p}} L^j}. \]
Finally, since $\omega$ is radial and increasing,  applying $k$ times property ($\alpha$) of the weight function $\omega$, we get, for $m\geq0$,
\begin{equation}\label{EqWeightTau}
e^{m\omega((1-\tau)x+\tau y, \xi)} \leq e^{m\omega(2^k(x,y,\xi))} \leq e^{m' \omega(x,y,\xi)} e^{mL^k + mL^{k-1} + \cdots + mL}.
\end{equation}
\end{proof}

\begin{cor}\label{CorSymbolAmplitudeTau}
Let $\varphi_j$ be the function in~\eqref{psi-function}. For all $\lambda>0$ there exists $C_{\lambda}>0$ such that
\begin{equation*}
|D^{\alpha}_x D^{\beta}_y D^{\gamma}_{\xi} \varphi_j((1-\tau)x+\tau y, \xi)| \leq C_{\lambda} \langle((1-\tau)x+\tau y, \xi)\rangle^{-\rho|\alpha+\beta+\gamma|} e^{\lambda\rho \varphi^{\ast}\big(\frac{|\alpha+\beta+\gamma|}{\lambda}\big)},
\end{equation*}
for every $\alpha,\beta,\gamma \in \mathbb{N}_0^d$ and $x,y,\xi \in \mathbb{R}^d$. Hence $\varphi_{j}((1-\tau)x + \tau y, \xi) \in \GA^{0,\omega}_{\rho}$ for all $\tau \in \mathbb{R}$.
\end{cor}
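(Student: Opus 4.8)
The plan is to apply Lemma~\ref{LemmaSymbolAmplitudeTau} with the symbol $b=\varphi_j$, but we need more than the mere membership $\varphi_j((1-\tau)x+\tau y,\xi)\in\GA_{\rho}^{0,\omega}$ claimed in the last sentence; we want the sharper \emph{uniform} estimate with the $\varphi^{\ast}$-loss controlled by an arbitrary $\lambda$ rather than a fixed one, and with the argument $((1-\tau)x+\tau y,\xi)$ appearing directly in the weight $\langle\cdot\rangle$. So the first step is to recall from~\cite{AJ} (as quoted right after~\eqref{psi-function}) that $\varphi_j\in\GS_{\rho}^{0,\omega}$, and crucially that the constants are \emph{independent of $j$}: for every $\lambda>0$ there is $C_\lambda>0$ (not depending on $j$) with
\[
|D_x^{\alpha}D_\xi^{\beta}\varphi_j(x,\xi)|\leq C_\lambda\langle(x,\xi)\rangle^{-\rho|\alpha+\beta|}e^{\lambda\rho\varphi^{\ast}(\frac{|\alpha+\beta|}{\lambda})}.
\]
This $j$-uniformity is exactly what makes $a(x,\xi):=\sum_j\varphi_ja_j$ converge, so it is available.

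Next I would substitute $(x,\xi)\mapsto((1-\tau)x+\tau y,\xi)$ and differentiate in $x,y,\xi$ via the chain rule. Each $\partial_{x_\ell}$ brings a factor $(1-\tau)$, each $\partial_{y_\ell}$ a factor $\tau$, and $\partial_{\xi_\ell}$ passes through unchanged; a $D_x^{\alpha}D_y^{\beta}D_\xi^{\gamma}$ derivative of the composition is $(1-\tau)^{|\alpha|}\tau^{|\beta|}$ times $(D_x^{\alpha+\beta}D_\xi^{\gamma}\varphi_j)$ evaluated at $((1-\tau)x+\tau y,\xi)$. Applying the $j$-uniform bound above with a shifted parameter, and absorbing the scalar $|1-\tau|^{|\alpha|}|\tau|^{|\beta|}$ into an exponential factor $e^{\rho\widetilde p|\alpha+\beta+\gamma|}$ as in the proof of Lemma~\ref{LemmaSymbolAmplitudeTau} (choose $\widetilde p\in\mathbb N$ with $\max\{|1-\tau|,|\tau|\}\le e^{\rho\widetilde p}$), I then use the rescaling inequality~\eqref{EqLLambdaVarphi} of Lemma~\ref{lemmafistrella} to convert $e^{\widetilde p\,|\alpha+\beta+\gamma|}e^{\lambda'L^{\widetilde p}\varphi^{\ast}(\cdot/(\lambda'L^{\widetilde p}))}$ back into $e^{\lambda'\varphi^{\ast}(\cdot/\lambda')}$ up to a constant. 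Since $\lambda'>0$ was arbitrary this gives the claimed estimate for an arbitrary $\lambda>0$, with $\langle((1-\tau)x+\tau y,\xi)\rangle^{-\rho|\alpha+\beta+\gamma|}$ as the polynomial decay. Note there is \emph{no} weight term $e^{m\omega}$ to worry about because the order of $\varphi_j$ is $m=0$.

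For the final sentence, i.e.\ deducing $\varphi_j((1-\tau)x+\tau y,\xi)\in\GA_{\rho}^{0,\omega}$, I would pass from $\langle((1-\tau)x+\tau y,\xi)\rangle^{-\rho|\alpha+\beta+\gamma|}$ to the quotient $\langle x-y\rangle^{\rho|\alpha+\beta+\gamma|}\langle(x,y,\xi)\rangle^{-\rho|\alpha+\beta+\gamma|}$ required in Definition~\ref{ga} using precisely the elementary inequality stated at the start of the proof of Lemma~\ref{LemmaSymbolAmplitudeTau}, namely $\langle(x,y,\xi)\rangle\le\sqrt6\langle\tau\rangle\langle x-y\rangle\langle((1-\tau)x+\tau y,\xi)\rangle$; raising it to the power $-\rho|\alpha+\beta+\gamma|$ and absorbing $(\sqrt6\langle\tau\rangle)^{-\rho|\alpha+\beta+\gamma|}\le 1$ yields $\langle((1-\tau)x+\tau y,\xi)\rangle^{-\rho|\alpha+\beta+\gamma|}\le\langle x-y\rangle^{\rho|\alpha+\beta+\gamma|}\langle(x,y,\xi)\rangle^{-\rho|\alpha+\beta+\gamma|}$, which is exactly the amplitude weight with $m=0$. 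Alternatively, one can simply invoke Lemma~\ref{LemmaSymbolAmplitudeTau} directly with $b=\varphi_j$ and $m=0$ (so $m'=0$ too), obtaining $\varphi_j((1-\tau)x+\tau y,\xi)\in\GA_\rho^{0,\omega}$ at once — the point of Corollary~\ref{CorSymbolAmplitudeTau} being the explicit $j$-uniform constant with arbitrary $\lambda$, needed later for term-by-term estimates of the amplitude $\sum_j\varphi_j a_j$ under change of quantization.

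The main obstacle is essentially bookkeeping rather than conceptual: one must be careful that the constant $C_\lambda$ genuinely does not depend on $j$, which requires that the $j$-uniformity of the bound for $\varphi_j\in\GS_\rho^{0,\omega}$ from~\cite{AJ} be invoked in the right form, and one must track how the parameter shift $\lambda\mapsto\lambda L^{\widetilde p}$ interacts with~\eqref{EqLLambdaVarphi} so that the final parameter is exactly the arbitrarily-prescribed $\lambda$. No genuinely hard estimate is involved; the weight $\omega$ plays no role here since the order is $0$.
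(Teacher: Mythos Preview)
Your proposal is correct and follows exactly the route the paper intends: the corollary is stated without proof because it is simply the proof of Lemma~\ref{LemmaSymbolAmplitudeTau} specialized to $b=\varphi_j$ with $m=0$, stopped at the intermediate stage where the weight $\langle((1-\tau)x+\tau y,\xi)\rangle$ still appears, together with the $j$-uniformity of the $\GS^{0,\omega}_{\rho}$ constants for $\varphi_j$ from~\cite{AJ}.

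One small slip in your last paragraph: raising $\langle(x,y,\xi)\rangle\le\sqrt6\,\langle\tau\rangle\,\langle x-y\rangle\,\langle((1-\tau)x+\tau y,\xi)\rangle$ to the negative power $-\rho|\alpha+\beta+\gamma|$ leaves a factor $(\sqrt6\,\langle\tau\rangle)^{+\rho|\alpha+\beta+\gamma|}\ge1$, not $\le1$, so it cannot just be dropped; it must be absorbed via~\eqref{EqLLambdaVarphi} exactly as you do with $|1-\tau|^{|\alpha|}|\tau|^{|\beta|}$ (this is why the paper chooses $\widetilde p$ with $(\sqrt6\,\langle\tau\rangle)^{\rho}\le e^{\rho\widetilde p}$), or---as you yourself note---one bypasses this step entirely by invoking Lemma~\ref{LemmaSymbolAmplitudeTau} directly.
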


Here, we generalize~\cite[Lemma 4.7]{AJ} to readapt it to our context.
\begin{lema}\label{LemmaAmplitudeRevisited}
Let $a(x,y,\xi)$ be an amplitude in $\GA^{m,\omega}_{\rho}$ and let $A$ be the corresponding pseudodifferential operator. For each $u \in \mathcal{S}_{\omega}(\mathbb{R}^d)$,
\[ A(u) = \mathcal{S}_{\omega}(\mathbb{R}^d) - \sum_{j=0}^{\infty} A_j(u), \]
where $A_j$ is the pseudodifferential operator defined by the amplitude
\[ (\varphi_j-\varphi_{j+1})((1-\tau)x+\tau y, \xi) a(x,y,\xi),\quad j \in \mathbb{N}_0. \]
\end{lema}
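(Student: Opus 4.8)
The plan is to set up a telescoping identity for the amplitude $a$ and then show that the associated series of pseudodifferential operators converges in $\mathcal{S}_{\omega}(\mathbb{R}^d)$. First I would recall that, by~\eqref{psi-function}, $\varphi_0 = 1$ and each $\varphi_j \in \GS^{0,\omega}_{\rho}$, so by Corollary~\ref{CorSymbolAmplitudeTau} the composite functions $\varphi_j((1-\tau)x+\tau y,\xi)$ belong to $\GA^{0,\omega}_{\rho}$; multiplying by $a \in \GA^{m,\omega}_{\rho}$ (and using the product estimates for amplitudes, together with $e^{m\omega} \le e^{\max\{0,m'\}\omega}\cdot(\text{const})$ as in Lemma~\ref{LemmaSymbolAmplitudeTau}) shows that $(\varphi_j - \varphi_{j+1})((1-\tau)x+\tau y,\xi)\,a(x,y,\xi)$ is again a global amplitude, so each $A_j$ is a well-defined pseudodifferential operator mapping $\mathcal{S}_{\omega}(\mathbb{R}^d)$ into itself. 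The key algebraic observation is that $\sum_{j=0}^{N-1}(\varphi_j - \varphi_{j+1}) = \varphi_0 - \varphi_N = 1 - \varphi_N$, so the partial sum $\sum_{j=0}^{N-1} A_j(u)$ is the pseudodifferential operator with amplitude $(1-\varphi_N((1-\tau)x+\tau y,\xi))\,a(x,y,\xi)$, and hence $A(u) - \sum_{j=0}^{N-1}A_j(u)$ is the operator with amplitude $\varphi_N((1-\tau)x+\tau y,\xi)\,a(x,y,\xi)$.

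The substance of the proof is then to show that this remainder tends to $0$ in $\mathcal{S}_{\omega}(\mathbb{R}^d)$ as $N \to \infty$. By Lemma~\ref{LemmaEquivalence} it suffices to estimate $|D^{\alpha}\big(A(u) - \sum_{j<N}A_j(u)\big)(x)|$ and the corresponding derivatives of the Fourier transform, obtaining bounds of the form $C_{\lambda,\mu,N} e^{\lambda\varphi^{\ast}(|\alpha|/\lambda)}e^{-\mu\omega(x)}$ with $C_{\lambda,\mu,N} \to 0$. The gain comes from the support of $\varphi_N$: by~\eqref{psi-function}, $\varphi_N((1-\tau)x+\tau y,\xi) = 1-\Phi((\,(1-\tau)x+\tau y,\xi)/A_{n,N})$ is supported where $\langle((1-\tau)x+\tau y,\xi)\rangle \gtrsim A_{n,N} = R e^{\frac{n}{N}\varphi^{\ast}(N/n)}$ (for $j_n \le N < j_{n+1}$), i.e. where $\log(\langle((1-\tau)x+\tau y,\xi)\rangle/R) \gtrsim \frac{n}{N}\varphi^{\ast}(N/n)$. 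On this region one exploits the factor $\langle((1-\tau)x+\tau y,\xi)\rangle^{-\rho|\alpha+\beta+\gamma|}$ from the amplitude estimate for $a$ together with the large lower bound on $\langle((1-\tau)x+\tau y,\xi)\rangle$: roughly, a factor $\langle((1-\tau)x+\tau y,\xi)\rangle^{-\rho N} \le (R e^{\frac{n}{N}\varphi^{\ast}(N/n)})^{-\rho N} = R^{-\rho N}e^{-\rho n\varphi^{\ast}(N/n)}$ can be extracted, and since $\varphi^{\ast}(N/n)/(N/n) \to \infty$ this produces, for any fixed $n$, an arbitrarily small constant once $N$ is large — reproducing exactly the kind of decay already used in~\cite{AJ} to build symbols from formal sums. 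This is essentially~\cite[Lemma 4.7]{AJ} with $x$ replaced by $(1-\tau)x+\tau y$ in the cutoff, so I would follow that proof, inserting the comparison $\langle(x,y,\xi)\rangle \le \sqrt{6}\langle\tau\rangle\langle x-y\rangle\langle((1-\tau)x+\tau y,\xi)\rangle$ from Lemma~\ref{LemmaSymbolAmplitudeTau} to convert estimates in the variable $(1-\tau)x+\tau y$ into estimates in $(x,y,\xi)$, and applying the oscillatory-integral / iterated-integral representation of pseudodifferential operators from~\cite[Theorem 3.7]{AJ} together with integration by parts using the entire function $G$ of Theorem~\ref{TheoLangenbruchSeveralVariables} to absorb the exponential growth $e^{m\omega(x,y,\xi)}$ of the amplitude against the rapid decay of $u \in \mathcal{S}_{\omega}(\mathbb{R}^d)$.

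The main obstacle I anticipate is bookkeeping the two competing index parameters: the number of derivatives $|\alpha+\beta+\gamma|$ (which wants a large $\varphi^{\ast}$-constant $n$ to be absorbed by the amplitude estimate) versus the truncation index $N$ (whose associated $\varphi^{\ast}(N/n)$ must be played off against the power of $\langle((1-\tau)x+\tau y,\xi)\rangle$ on the support of $\varphi_N$), all while keeping the final weight decay $e^{-\mu\omega(x)}$ with $\mu$ arbitrary. Handling this requires careful use of Lemma~\ref{lemmafistrella}, in particular~\eqref{EqLLambdaVarphi} and~\eqref{EqEstimationVarphi}, to split and recombine $\varphi^{\ast}$-terms, plus Proposition~\ref{PropTechnical} (after possibly refining the growth of $\omega$) to dominate the factorial-type constants arising from integration by parts. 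Once these estimates are assembled, the convergence $A(u) = \mathcal{S}_{\omega}(\mathbb{R}^d)\text{-}\sum_{j}A_j(u)$ follows, and since the argument is uniform on bounded sets of $u$ it also gives convergence in the relevant topology. I would simply refer to~\cite[Lemma 4.7]{AJ} for the parts of the computation that are unchanged.
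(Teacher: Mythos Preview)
Your telescoping setup is correct and matches the paper: $\sum_{j=0}^{N}(\varphi_j-\varphi_{j+1})=1-\varphi_{N+1}$, so the partial sum $\sum_{j\le N}A_j$ has amplitude $(1-\varphi_{N+1})((1-\tau)x+\tau y,\xi)\,a(x,y,\xi)$. From here, however, your proposed mechanism for the decay is off, and the paper proceeds differently in a way that matters.

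The heuristic ``extract a factor $\langle((1-\tau)x+\tau y,\xi)\rangle^{-\rho N}$ from the amplitude'' does not apply here. That kind of decay is available when the amplitude is built from a \emph{formal sum} $\sum p_j$ with $p_j$ carrying $\langle(x,\xi)\rangle^{-\rho j}$ (this is exactly what happens in Lemma~\ref{LemmaSymbolRevisited} and in~\cite[Theorem~4.6]{AJ}). In the present lemma the amplitude is a fixed $a\in\GA^{m,\omega}_\rho$ with growth $e^{m\omega(x,y,\xi)}$ and no $N$-dependent decay whatsoever; $\varphi_{N+1}$ only contributes a support restriction. So there is no $\langle\cdot\rangle^{-\rho N}$ to extract, and the bookkeeping you anticipate between $|\alpha+\beta+\gamma|$ and $N$ is not the issue.

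The paper therefore splits the argument in two. First, since $A_{n,N+1}\to\infty$, the partial sums $\sum_{j\le N}A_j(u)$ form (by the oscillatory-integral argument of~\cite[Proposition~3.3]{AJ}, unchanged) a sequence converging in $\mathcal{S}_\omega(\mathbb{R}^d)$ to some $B_\tau(u)$. Second, one identifies $B_\tau=A$ \emph{weakly}: for $f,g\in\mathcal{S}_\omega(\mathbb{R}^d)$ one shows
\[
\iiint e^{i(x-y)\cdot\xi}\Big(\Phi\Big(\tfrac{((1-\tau)x+\tau y,\xi)}{k}\Big)-1\Big)a(x,y,\xi)f(y)g(x)\,dy\,d\xi\,dx\longrightarrow 0
\]
as $k\to\infty$. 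Here integration by parts with $G^s(D_y)$ (formula~\eqref{IntPartsY}) is used exactly as you suggest, but the gain in $k$ is elementary: on the support of $\Phi(\cdot/k)-1$ one has $|((1-\tau)x+\tau y,\xi)|>2k$, hence $1\le \tfrac{1}{k}(|1-\tau|+|\tau|)(|x|+1)(|y|+1)(|\xi|+1)$, and the extra polynomial factors are absorbed by the decay of $f$, $g$ and $G^s(\xi)^{-1}$. This yields a bound $C/k\to0$; no $\varphi^\ast$-gymnastics with $N$ are needed at this step. Note that testing against $g$ is what supplies decay in $x$ --- your plan to estimate $\mathcal{S}_\omega$-seminorms of the remainder directly runs into the difficulty that for large $|x|$ the support condition on $\varphi_{N+1}$ gives no constraint on $(y,\xi)$, so there is no obvious uniform-in-$x$ smallness without the pairing with $g$.

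In short: keep your telescoping and the reference to~\cite[Lemma~4.7]{AJ}, but replace the $\langle\cdot\rangle^{-\rho N}$ heuristic by the two-step structure above (convergence of the regularized integrals via~\cite[Proposition~3.3]{AJ}, then weak identification with a $1/k$ factor from the support of the cutoff).
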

\begin{proof}
By Corollary~\ref{CorSymbolAmplitudeTau}, 
$(\varphi_j - \varphi_{j+1})((1-\tau)x+\tau y,\xi)a(x,y,\xi) \in \GA^{m,\omega}_{\rho}$. Since $A_{n,N+1}\to \infty$ as $N\to\infty$, proceeding as in~\cite[Proposition 3.3]{AJ}, one can show that, for each $u \in \mathcal{S}_{\omega}(\mathbb{R}^d)$, the series
\[ \sum_{j=0}^{\infty} A_j(u)(x) = \mathcal{S}_{\omega}(\mathbb{R}^d) - \lim_{N \to \infty} \iint e^{i(x-y)\cdot \xi} \big( 1-\varphi_{N+1}((1-\tau)x+\tau y,\xi)\big)a(x,y,\xi) u(y) dy d\xi \]
defines a continuous linear operator $B_\tau:\Sch_\omega(\R^d)\to\Sch_\omega(\R^d)$. It remains to show that for all $\tau \in \mathbb{R}$,
\begin{equation}\label{EqEqualityTauLimitOperator}
B_{\tau}(f)(x) = \iint e^{i(x-y)\cdot \xi} a(x,y,\xi) f(y) dy d\xi, \qquad f \in \mathcal{S}_{\omega}(\mathbb{R}^d).
\end{equation}
We recall that
\[ (1-\varphi_{N+1})((1-\tau)x+\tau y,\xi) = \Phi\Big(\frac{((1-\tau)x+\tau y,\xi)}{A_{n,N+1}}\Big)\]
and $\Phi(0)=1$, being $\Phi \in \mathcal{D}_{\sigma}(\mathbb{R}^{2d})$ the function in~\eqref{EqDefinitionPhi} with $\omega(t^{1/\rho}) = O(\sigma(t))$, $t \to \infty$.
Since it is enough to see~\eqref{EqEqualityTauLimitOperator} in $\Sch'_{\omega}(\R^d)$, we show that for each $f,g\in \Sch_\omega(\R^d)$,
\begin{equation}\label{EqEqualityTauLimitOperator1}
\iiint e^{i(x-y)\cdot \xi} \Big( \Phi\Big(\frac{(1-\tau)x + \tau y, \xi}{k}\Big) - 1\Big) a(x,y,\xi) f(y) g(x) dy d\xi dx \to 0
\end{equation}
as $k\to\infty$. We use the following identity to integrate by parts with the ultradifferential operator $G(D)$ associated to the entire function in Proposition~\ref{PropPropertiesUltradiffOperator}:
\begin{equation}\label{IntPartsY}
e^{i(x-y)\cdot \xi} = \frac1{G^s(\xi)} G^s(-D_y) e^{i(x-y)\cdot \xi},
\end{equation}
for some power $s \in \mathbb{N}$ that we will determine later. Then, the integrand in the left-hand side of~\eqref{EqEqualityTauLimitOperator1} equals
\begin{align*}
& e^{i(x-y)\cdot \xi} \frac1{G^s(\xi)} G^s(D_y)\Big( \Big( \Phi\Big(\frac{(1-\tau)x + \tau y, \xi}{k}\Big) - 1 \Big) a(x,y,\xi)  f(y) g(x) \Big) \\
& \qquad = e^{i(x-y)\cdot \xi} \frac1{G^s(\xi)} \sum_{\eta \in \mathbb{N}_0^d} b_{\eta} \sum_{\eta_1 + \eta_2 + \eta_3 = \eta} \frac{\eta!}{\eta_1! \eta_2! \eta_3!} \big(\frac{\tau}{k}\big)^{|\eta_1|} D^{\eta_1}_y \Big( \Phi\Big(\frac{(1-\tau)x + \tau y, \xi}{k}\Big) - 1 \Big) \times \\
& \qquad \quad \times D^{\eta_2}_y a(x,y,\xi) D^{\eta_3}_y f(y) g(x).
\end{align*}
Therefore, the integral in~\eqref{EqEqualityTauLimitOperator1} is equal to
\begin{align*}
& \sum_{\eta \in \mathbb{N}_0^d} b_{\eta} \sum_{\eta_1+\eta_2+\eta_3=\eta} \frac{\eta!}{\eta_1! \eta_2! \eta_3!} \big(\frac{\tau}{k}\big)^{|\eta_1|} \iiint e^{i(x-y)\cdot \xi}
\frac1{G^s(\xi)} \times \\
& \quad \times D^{\eta_1}_y\Big( \Phi\Big(\frac{(1-\tau)x + \tau y, \xi}{k}\Big) - 1 \Big) D^{\eta_2}_y a(x,y,\xi)  D^{\eta_3}_y f(y) g(x) dy d\xi dx.
\end{align*}
From Proposition~\ref{PropPropertiesUltradiffOperator}, there are $C_1,C_2,C_3>0$ (depending only on $G$) such that for all $\eta\in\N_0^d$ and $\xi\in\R^d$ we have
\begin{equation}\label{EqEst7}
|b_{\eta}| \leq e^{sC_1} e^{-sC_1 \varphi^{\ast}\big(\frac{|\eta|}{sC_1}\big)}, \qquad \Big|\frac1{G^s(\xi)}\Big| \leq C_3^s e^{-sC_2\omega(\xi)}.
\end{equation}
It follows from Definition~\ref{ga} (see for example~\cite[Lemma 2.6]{AJ}) that for all $\lambda>0$ there exists $C_{\lambda}>0$ such that ($L\geq 1$ is the constant of Lemma~\ref{lemmafistrella})
\[ |D^{\eta_2}_y a(x,y,\xi)| \leq C_{\lambda} e^{\lambda L^3 \varphi^{\ast}\big(\frac{|\eta_2|}{\lambda L^3}\big)} e^{m\omega(x,y,\xi)}. \]
Since $f,g \in \mathcal{S}_{\omega}(\mathbb{R}^d)$, there exist $C'_{\lambda,m}, C_m>0$ such that
\[ |D^{\eta_3}_y f(y)| \leq C'_{\lambda, m} e^{\lambda L^3 \varphi^{\ast}\big(\frac{|\eta_3|}{\lambda L^3 }\big)} e^{-(mL+1) \omega(y)}, \qquad |g(x)|
\leq C_m e^{-(mL+1)\omega(x)}. \]
For $\eta_1=0$ we have $\Phi \equiv 1$ if $|((1-\tau)x + \tau y, \xi)| \leq 2k$, and for $|\eta_1|>0$ it follows that
$D^{\eta_1}_y\Big( \Phi\Big(\frac{(1-\tau)x + \tau y, \xi}{k}\Big) - 1 \Big) = D^{\eta_1}_y \Phi \Big(\frac{(1-\tau)x + \tau y, \xi}{k}\Big)$ is zero for
$|((1-\tau)x + \tau y, \xi)| \leq 2k$; therefore, we can assume that $|((1-\tau)x + \tau y, \xi)| > 2k$. In particular, we have
\[ 1 \leq \frac1{2k} |((1-\tau)x + \tau y, \xi)| \leq \frac1{k}(|1-\tau|+|\tau|)(|x|+1)(|y|+1)(|\xi|+1). \]
As $\Phi \in \mathcal{D}_{\sigma}(\mathbb{R}^{2d}) \subseteq \mathcal{D}_{\omega}(\mathbb{R}^{2d})$, there exists $C''_{\lambda}>0$ such that
\[ |\tau|^{|\eta_1|} \Big| D^{\eta_1}_y \Big(\Phi\Big( \frac{(1-\tau)x + \tau y, \xi}{k}\Big) - 1 \Big) \Big| \leq C''_{\lambda} e^{\lambda L^3
\varphi^{\ast}\big(\frac{|\eta_1|}{\lambda L^3}\big)}, \qquad \eta_1 \in \mathbb{N}_0^d. \]
For $m\geq 0$ (if $m<0$, then $m\omega(x,y,\xi) < 0$), since
\[ m\omega(x,y,\xi) \leq mL \omega(x) + mL \omega(y) + mL\omega(\xi) + mL, \]
it is enough to take $s \in \mathbb{N}$ satisfying $sC_2 \geq mL+1$ to get $e^{(-sC_2 + mL)\omega(\xi)} \leq e^{-\omega(\xi)}$, and therefore the integrals are convergent by condition $(\gamma)$ of the weight $\omega$. On the other hand, since $\sum \frac{\eta!}{\eta_1! \eta_2! \eta_3!} = 3^{|\eta|} \leq e^{2|\eta|}$, by Lemma~\ref{lemmafistrella} we have
\[ \sum_{\eta_1+\eta_2+\eta_3=\eta} \frac{\eta!}{\eta_1! \eta_2! \eta_3!} e^{\lambda L^3 \varphi^{\ast}\big(\frac{|\eta_1|}{\lambda L^3}\big)}
e^{\lambda L^3 \varphi^{\ast}\big(\frac{|\eta_2|}{\lambda L^3}\big)} e^{\lambda L^3 \varphi^{\ast}\big(\frac{|\eta_3|}{\lambda L^3}\big)} \leq
e^{\lambda L \varphi^{\ast}\big(\frac{|\eta|}{\lambda L}\big)} e^{\lambda L^2 + \lambda L^3}. \]
Now, the series
\[ \sum_{\eta \in \mathbb{N}_0^d} e^{-sC_1 \varphi^{\ast}\big(\frac{|\eta|}{sC_1}\big)} e^{\lambda L \varphi^{\ast}\big(\frac{|\eta|}{\lambda L}\big)} \]
converges provided $\lambda > sC_1$ (see~\cite[(3.5), (3.6)]{AJ}). Thus, there exists $C>0$ such that
\[ \Big| \iiint e^{i(x-y)\cdot \xi} \Big( \Phi\Big(\frac{(1-\tau)x + \tau y, \xi}{k}\Big) - 1\Big) a(x,y,\xi) f(y) g(x) dy d\xi dx \Big| \leq C
\frac1{k} \to 0, \]
and hence~\eqref{EqEqualityTauLimitOperator1} is satisfied.
\end{proof}
The next result is the corresponding extension of \cite[Proposition 4.8]{AJ}.
\begin{lema}\label{LemmaSymbolRevisited}
Let $\sum p_j \in \FGS^{m,\omega}_{\rho}$ and let $(C_n)_n, (C'_n)_n$ be the sequences of constants that appear in Definition~\ref{DefFormalSums} and in the estimate of the derivatives of $\varphi_j$ in Corollary~\ref{CorSymbolAmplitudeTau}. We denote $D_n:=C_{2nL^{\widetilde{p}+1}}$ and $D'_n:=C'_{nL^{\widetilde{p}+1}}$, where $L\geq 1$ is the constant of Lemma~\ref{lemmafistrella} and $\widetilde{p}\in\mathbb{N}_0$ is so that $3\langle\tau\rangle\leq e^{\widetilde{p}}$, for a fixed $\tau\in\R$. Consider $(j_n)_n$, $j_n \in \mathbb{N}$, such that $j_1=1$, $j_n < j_{n+1}$, $j_n/n \to \infty$ and
\[ D_{n+1} D'_{n+1} \sum_{j=j_{n+1}}^{\infty} (2R)^{-\rho j} \leq \frac{D_n D'_n}{2} \sum_{j=j_n}^{j_{n+1}-1} (2R)^{-\rho j},\quad n\in\N,$$ and moreover,  $$\frac{n}{j} \varphi^{\ast}\big(\frac{j}{n}\big) \geq \max\{n, \log D_n, \log D'_n \}, \quad \mbox{for }j \geq j_n. \]
If
\[ a(x,\xi) := \sum_{j=0}^{\infty} \varphi_j(x,\xi) p_j(x,\xi), \]
then the associated pseudodifferential operator $A$ is the limit in $L(\mathcal{S}_{\omega}(\mathbb{R}^d),\mathcal{S}'_{\omega}(\mathbb{R}^d))$ of the sequence of operators $S_{N,\tau}:\mathcal{S}_{\omega}(\mathbb{R}^d) \to \mathcal{S}_{\omega}(\mathbb{R}^d)$, where each $S_{N,\tau}$ is a pseudodifferential operator with amplitude
\[ \sum_{j=0}^N (\varphi_j-\varphi_{j+1})((1-\tau)x+\tau y,\xi) (\sum_{l=0}^j p_l((1-\tau)x+\tau y, \xi)).\]
\end{lema}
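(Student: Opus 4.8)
The plan is to reduce the assertion, through a summation by parts, to the vanishing in $L(\Sch_\omega(\R^d),\Sch'_\omega(\R^d))$ of an explicit sequence of error operators, and then to control that sequence with the support properties of the cut-offs $\varphi_j$, the estimates of Definition~\ref{DefFormalSums} and Corollary~\ref{CorSymbolAmplitudeTau}, and the combinatorial conditions imposed on $(j_n)_n$. First, writing $z=(1-\tau)x+\tau y$ and using $\varphi_0=1$, a summation by parts shows that the amplitude of $S_{N,\tau}$ equals $\sum_{j=0}^N\varphi_j(z,\xi)p_j(z,\xi)-\varphi_{N+1}(z,\xi)\sum_{l=0}^N p_l(z,\xi)$. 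Since $A$ is the pseudodifferential operator with amplitude $a((1-\tau)x+\tau y,\xi)=\sum_{l\ge0}\varphi_l(z,\xi)p_l(z,\xi)$ (a sum with, for each fixed $(z,\xi)$, only finitely many nonzero terms, since $A_{n,j}\to\infty$), it follows that $A-S_{N,\tau}=E_N$, where $E_N$ is the pseudodifferential operator with amplitude
\[ r_N(x,y,\xi):=\sum_{l=N+1}^\infty\varphi_l(z,\xi)p_l(z,\xi)+\varphi_{N+1}(z,\xi)\sum_{l=0}^N p_l(z,\xi). \]
Thus it suffices to prove that $E_N\to0$ in $L(\Sch_\omega(\R^d),\Sch'_\omega(\R^d))$, that is, that $\iiint e^{i(x-y)\cdot\xi}r_N(x,y,\xi)u(y)v(x)\,dy\,d\xi\,dx\to0$ for all $u,v\in\Sch_\omega(\R^d)$.

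To estimate this integral I would integrate by parts $s$ times in $y$ with the ultradifferential operator $G^s(-D_y)$, exactly as in the proof of Lemma~\ref{LemmaAmplitudeRevisited}, using the bounds $|b_\eta|\le e^{sC_1}e^{-sC_1\varphi^{\ast}(|\eta|/(sC_1))}$ and $|G^s(\xi)|^{-1}\le C_3^se^{-sC_2\omega(\xi)}$ of Proposition~\ref{PropPropertiesUltradiffOperator} and the Leibniz rule to distribute the $y$-derivatives over the product $r_N(x,y,\xi)u(y)$. The core pointwise estimate concerns the derivatives of $\varphi_l(z,\xi)p_l(z,\xi)$: the chain rule through $z=(1-\tau)x+\tau y$ produces factors $|1-\tau|^{|\cdot|}|\tau|^{|\cdot|}$ which, by~\eqref{EqLLambdaVarphi} and the choice of $\widetilde p$, are absorbed by passing from the index $n$ to $2nL^{\widetilde p+1}$ for $p_l$ (whence $D_n=C_{2nL^{\widetilde p+1}}$) and to $nL^{\widetilde p+1}$ for $\varphi_l$ (whence $D'_n=C'_{nL^{\widetilde p+1}}$, via Corollary~\ref{CorSymbolAmplitudeTau}). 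The decisive point is that on $\supp\varphi_l$ one has $\langle(z,\xi)\rangle\ge 2A_{n,l}=2Re^{\frac n l\varphi^{\ast}(\frac l n)}$ whenever $j_n\le l<j_{n+1}$, so the estimate of Definition~\ref{DefFormalSums} for $p_l$ is valid there and $\langle(z,\xi)\rangle^{-\rho l}\le(2R)^{-\rho l}e^{-\rho n\varphi^{\ast}(l/n)}$ on that set; after splitting the Young conjugates with Lemma~\ref{lemmafistrella}(2), the factor $e^{-\rho n\varphi^{\ast}(l/n)}$ cancels the growth $e^{n\rho\varphi^{\ast}(\cdot)}$ coming from the derivatives, leaving a bound of the form $D_nD'_n(2R)^{-\rho l}$ times a function of $(x,y,\xi)$ that is integrable, since $e^{m\omega(z,\xi)}$ is dominated, via~\eqref{EqOmegaDiff}, by the $e^{-\mu\omega(x)}e^{-\mu\omega(y)}$-decay of $u,v$ (with $\mu$ large) together with $e^{-sC_2\omega(\xi)}$ (with $s$ large) and condition~$(\gamma)$.

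It remains to sum over $l$ and pass to the limit. For the tail $\sum_{l>N}\varphi_l p_l$, grouping the indices $l$ into the blocks $j_n\le l<j_{n+1}$ and using the hypothesis $D_{n+1}D'_{n+1}\sum_{j\ge j_{n+1}}(2R)^{-\rho j}\le\tfrac12 D_nD'_n\sum_{j=j_n}^{j_{n+1}-1}(2R)^{-\rho j}$ makes the block sums decay geometrically, so that $\sum_{l>N}D_{n(l)}D'_{n(l)}(2R)^{-\rho l}\to0$ as $N\to\infty$; finiteness of the series, and the harmlessness of the constants, follow from $\frac n j\varphi^{\ast}(\frac j n)\ge\max\{n,\log D_n,\log D'_n\}$ for $j\ge j_n$. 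For the remaining term $\varphi_{N+1}(z,\xi)\sum_{l=0}^N p_l(z,\xi)$, on $\supp\varphi_{N+1}$ one has $\langle(z,\xi)\rangle\ge 2A_{n,N+1}\ge 2A_{n,l}$ for every $l\le N$ (with $n$ the block of $N+1$, since $\frac n j\varphi^{\ast}(j/n)$ increases in $j$ within a block), so the estimate of Definition~\ref{DefFormalSums} applies to each $p_l$ there; the corresponding integrand is then dominated by a fixed integrable function while its support recedes to infinity as $A_{n,N+1}\to\infty$, so this contribution tends to $0$ by dominated convergence (here $\frac n j\varphi^{\ast}(j/n)\ge\log D_n$ for $j\ge j_n$, together with~$(\gamma)$, keeps the growth of the constants under control against the $e^{-\omega}$-decay of the tail of the integral). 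Combining, $E_N\to0$, i.e.\ $S_{N,\tau}\to A$ in $L(\Sch_\omega(\R^d),\Sch'_\omega(\R^d))$.

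The step I expect to be the main obstacle is the bookkeeping underlying the last two paragraphs: one has to track simultaneously the index loss caused by the linear change $z=(1-\tau)x+\tau y$ (through $\widetilde p$, $D_n$, $D'_n$), the precise region of validity of the $\FGS$-estimates as compared with $\supp\varphi_l$, and the interplay of the three defining conditions of $(j_n)_n$, so that after all the cancellations the series over $l$ really does converge to $0$. Everything else is a routine adaptation of the arguments for $\tau=0$ in~\cite{AJ}.
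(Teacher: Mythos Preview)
Your approach is essentially the same as the paper's: you perform the same summation by parts to write the amplitude of $S_{N,\tau}$ as $\sum_{j\le N}\varphi_jp_j-\varphi_{N+1}\sum_{l\le N}p_l$ (all evaluated at $((1-\tau)x+\tau y,\xi)$), split the error $A-S_{N,\tau}$ into the tail $\sum_{j>N}\varphi_jp_j$ and the boundary term $\varphi_{N+1}\sum_{l\le N}p_l$, integrate by parts in $y$ with the ultradifferential operator $G^s(-D_y)$, and control the resulting series with the support conditions on $\varphi_j$, the estimates for $p_j$ and $\varphi_j$ (absorbing the $\tau$-factors via the choice of $\widetilde p$ and the passage to $D_n,D'_n$), and the combinatorics on $(j_n)_n$. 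The paper carries out the tail estimate in full detail and, for the boundary term, simply refers back to \cite[Proposition~4.8]{AJ}; your dominated-convergence argument for that term, using that $\supp\varphi_{N+1}$ recedes to infinity together with the bound $\frac{n}{j}\varphi^\ast(j/n)\ge\log D_n$, is the same mechanism spelled out slightly differently.
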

\begin{proof}
For each $j \in \mathbb{N}_0$, one can show that
\[ (\varphi_j - \varphi_{j+1})((1-\tau)x+\tau y, \xi) \sum_{l=0}^j p_l((1-\tau)x + \tau y, \xi) = \sum_{l=0}^j ((\varphi_j - \varphi_{j+1}) p_l)((1-\tau)x + \tau y, \xi) \]
is a global amplitude in $\GA^{\max\{0,m'\},\omega}_{\rho}$, $m'$ being set in~\eqref{EqDefinitionmprime}. 
Hence, the function
\[ \sum_{j=0}^N (\varphi_j - \varphi_{j+1}) \Big( \sum_{l=0}^j p_l \Big) = \sum_{j=0}^N \varphi_j p_j - \varphi_{N+1} \sum_{l=0}^N p_l \]
is a global amplitude in $\GA^{\max\{0,m'\},\omega}_{\rho}$.

Now, we prove that $S_{N,\tau} \to A$ in $L(\mathcal{S}_{\omega}(\mathbb{R}^d), \mathcal{S}'_{\omega}(\mathbb{R}^d))$ as $N \to \infty$. As in the proof of \cite[Proposition 4.8]{AJ}, it is enough to show that, for any $f,g \in \mathcal{S}_{\omega}(\mathbb{R}^d)$, $\langle (S_{N,\tau} - A)f, g \rangle \to 0$ as $N \to \infty$. Note that $A$ and $S_{N,\tau}$, $N=1,2,\ldots$ act continuously on $\mathcal{S}_{\omega}(\mathbb{R}^d)$. Thus 
\begin{align*}
\langle (S_{N,\tau} - A)f, g \rangle &= \int (S_{N,\tau} - A)f(x) g(x) dx \\
&= \int \Big( \iint e^{i(x-y)\cdot \xi} \Big( \Big\{ \sum_{j=0}^N \varphi_j p_j - \varphi_{N+1} \sum_{l=0}^N p_l \Big\} - a \Big) f(y) dy d\xi \Big) g(x) dx
\end{align*}
for every $f,g \in \mathcal{S}_{\omega}(\mathbb{R}^d)$, where $\varphi_j, \varphi_N, p_j, p_l$, and $a$ are evaluated at $((1-\tau)x+\tau y, \xi)$.

We  show that, for each $f,g \in \mathcal{S}_{\omega}(\mathbb{R}^d)$,
\begin{align*}
a)& \int \Big( \iint e^{i(x-y)\cdot \xi} \Big( \sum_{j=N+1}^{\infty} \varphi_j((1-\tau)x+\tau y,\xi) p_j((1-\tau)x+\tau y,\xi)\Big) f(y) dy d\xi \Big) g(x) dx \quad \mbox{and} \\
b)& \int \Big( \iint e^{i(x-y)\cdot \xi} \Big( \varphi_{N+1}((1-\tau)x+\tau y,\xi) \sum_{l=0}^N p_l((1-\tau)x+\tau y,\xi) \Big) f(y) dy d\xi \Big) g(x) dx
\end{align*}
tend to zero when $N \to \infty$.

Let us show that the integral in $a)$ goes to zero. We 
integrate by parts with  formula~\eqref{IntPartsY} for some $s \in \mathbb{N}$ to be determined later.
Then
\begin{align*}
& e^{i(x-y)\cdot \xi} \frac1{G^s(\xi)} G^s(D_y) \Big( \sum_{j=N+1}^{\infty} \varphi_j \cdot p_j \cdot f(y) \Big) \\
& \qquad = e^{i(x-y)\cdot \xi} \frac1{G^s(\xi)} \sum_{\eta \in \mathbb{N}_0^d} b_{\eta} \sum_{\eta_1+\eta_2+\eta_3=\eta} \frac{\eta!}{\eta_1! \eta_2! \eta_3!} \sum_{j=N+1}^{\infty} \tau^{|\eta_1+\eta_2|} D^{\eta_1}_y \varphi_j \cdot D^{\eta_2}_y p_j \cdot D^{\eta_3}_y f(y).
\end{align*}
Hence, we can reformulate the integral in $a)$ as
\begin{equation}\label{Eq1}
\begin{split}
& \int \Big( \int \frac1{G^s(\xi)} \sum_{\eta \in \mathbb{N}_0^d} b_{\eta} \sum_{\eta_1+\eta_2+\eta_3=\eta} \frac{\eta!}{\eta_1! \eta_2! \eta_3!} \tau^{|\eta_1+\eta_2|} \times \\
& \quad \times \int e^{i(x-y)\cdot \xi} \sum_{j=N+1}^{\infty} D^{\eta_1}_y \varphi_j \cdot D^{\eta_2}_y p_j \cdot D^{\eta_3}_y f(y) dy d\xi \Big) g(x) dx. 
\end{split}
\end{equation}
When $\varphi_j \neq 0$, and $j_n \leq j < j_{n+1}$, we have $\log\big(\frac{\langle((1-\tau)x+\tau y, \xi)\rangle}{2R}\big) \geq \frac{n}{j}\varphi^{\ast}\big(\frac{j}{n}\big)$ (see~\eqref{psi-function}). 
By Corollary~\ref{CorSymbolAmplitudeTau}, for each $n \in \mathbb{N}$, the following estimate holds (as in the hypotheses of this lemma, we denote $D'_n=C'_{nL^{\widetilde{p}+1}}>0$)
\begin{equation*}
|D^{\eta_1}_y \varphi_j((1-\tau)x+\tau y, \xi)| \leq D'_n e^{nL^{\widetilde{p}+1}\varphi^{\ast}\big(\frac{|\eta_1|}{nL^{\widetilde{p}+1}}\big)}.
\end{equation*}
Moreover, for that $n \in \mathbb{N}$ (as in the hypotheses of this lemma, we denote $D_n=C_{2nL^{\widetilde{p}+1}}>0$), by~\eqref{EqEstimationVarphi}, we have
\begin{align*}
& |D^{\eta_2}_y p_j((1-\tau)x+\tau y,\xi)| \\
& \qquad \leq D_n e^{2n L^{\widetilde{p}+1}\rho \varphi^{\ast}\big(\frac{|\eta_2|+j}{2n L^{\widetilde{p}+1}}\big)} \langle((1-\tau)x+\tau y,\xi)\rangle^{-\rho(|\eta_2|+j)} e^{m\omega((1-\tau)x+\tau y,\xi)} \\
& \qquad \leq D_n e^{n L^{\widetilde{p}+1} \varphi^{\ast}\big(\frac{|\eta_2|}{nL^{\widetilde{p}+1}}\big)} e^{nL^{\widetilde{p}+1}\rho \varphi^{\ast}\big(\frac{j}{nL^{\widetilde{p}+1}}\big)} \langle((1-\tau)x+\tau y,\xi)\rangle^{-\rho j} e^{m\omega((1-\tau)x+\tau y, \xi)} \\ 
& \qquad \leq D_n e^{nL^{\widetilde{p}+1} \varphi^{\ast}\big(\frac{|\eta_2|}{nL^{\widetilde{p}+1}}\big)} (2R)^{-\rho j} e^{m\omega((1-\tau)x+\tau y, \xi)}.
\end{align*}
Property $(\gamma)$ of Definition~\ref{DefinitionWeightFunction} yields that there exists $C>0$ such that $\langle x \rangle \leq Ce^{\omega(\langle x \rangle)}$, $x \in \mathbb{R}^d$.  
Then, using~\eqref{EqOmegaX},
\begin{align*}
e^{m\omega((1-\tau)x+\tau y,\xi)} &\leq e^{(m+3)\omega(\langle ( (1-\tau)x+\tau y, \xi) \rangle)} e^{-3\omega(\langle( (1-\tau)x + \tau y, \xi)\rangle)} \\
&\leq e^{(m+3)L \omega( (1-\tau)x+\tau y, \xi)} e^{(m+3)L} C^3 \langle ((1-\tau)x+\tau y, \xi) \rangle^{-3} \\
&\leq C^3 e^{(m+3)L \omega( (1-\tau)x+\tau y, \xi)} e^{(m+3)L} e^{-3\frac{n}{j}\varphi^{\ast}(\frac{j}{n})}.
\end{align*}
By~\eqref{EqWeightTau} ($k$ being as in~\eqref{EqDefinitionk}) and as in~\eqref{EqOmegaDiff}, we obtain 
\begin{align*}
e^{(m+3)L\omega((1-\tau)x+\tau y, \xi)} &\leq e^{(m+3)L^{k+1}\omega(x,y,\xi)} e^{(m+3)L^{k+1} + \cdots + (m+3)L^2} \\ 
&\leq e^{(m+3)L^{k+2}(\omega(x)+\omega(y)+\omega(\xi))} e^{(m+3)L^{k+2} + \cdots + (m+3)L^2}.
\end{align*}

Take $0 < \ell < n$. Later, an additional condition will be imposed on $\ell$. Since $f,g \in \mathcal{S}_{\omega}(\mathbb{R}^d)$, there are $C''_{\ell}>0$, 
which depends on $\ell$,$m$, and on $\tau$, and $D>0$ that depends on $m$ and on $\tau$ such that
\begin{align*}
|D^{\eta_3}_y f(y)| &\leq C''_{\ell} e^{\ell L^{\widetilde{p}+1}\varphi^{\ast}\big(\frac{|\eta_3|}{\ell L^{\widetilde{p}+1}}\big)} e^{-((m+3)L^{k+2}+1)\omega(y)}; \\ 
|g(x)| &\leq D e^{-((m+3)L^{k+2}+1)\omega(x)}.
\end{align*}
Lemma~\ref{lemmafistrella}, the choice of $\widetilde{p}\in \mathbb{N}$  and the fact that $\sum \frac{\eta!}{\eta_1! \eta_2! \eta_3!}= 3^{|\eta|}$ provide
\begin{align*}
& \sum_{\eta_1+\eta_2+\eta_3=\eta} \frac{\eta!}{\eta_1! \eta_2! \eta_3!} |\tau|^{|\eta_1+\eta_2|} e^{nL^{\widetilde{p}+1}\varphi^{\ast}\big(\frac{|\eta_1|}{nL^{\widetilde{p}+1}}\big)} e^{nL^{\widetilde{p}+1}\varphi^{\ast}\big(\frac{|\eta_2|}{nL^{\widetilde{p}+1}}\big)} e^{\ell L^{\widetilde{p}+1}\varphi^{\ast}\big(\frac{|\eta_3|}{\ell L^{\widetilde{p}+1}}\big)} \\
& \qquad \leq \langle \tau \rangle^{|\eta|} e^{\ell L^{\widetilde{p}+1}\varphi^{\ast}\big(\frac{|\eta|}{\ell L^{\widetilde{p}+1}}\big)} \sum_{\eta_1+\eta_2+\eta_3=\eta} \frac{\eta!}{\eta_1! \eta_2! \eta_3!} \\
& \qquad \leq e^{\ell L\varphi^{\ast}\big(\frac{|\eta|}{\ell L}\big)} e^{\ell L\sum_{r=1}^{\widetilde{p}} L^r}.
\end{align*}
Thus, from~\eqref{EqEst7}, we estimate~\eqref{Eq1} by
\begin{align*}
& \int \Big( \int C_3^s e^{-sC_2\omega(\xi)} \sum_{\eta \in \mathbb{N}_0^d} e^{sC_1} e^{-sC_1\varphi^{\ast}\big(\frac{|\eta|}{sC_1}\big)} \Big( \int \sum_{j=N+1}^{\infty} D_n D'_n e^{\ell L\varphi^{\ast}\big(\frac{|\eta|}{\ell L}\big)} e^{\ell L\sum_{r=1}^{\widetilde{p}}L^r} \times \\
& \quad \times (2R)^{-\rho j} C^3 e^{(m+3)L+(m+3)L^2+\cdots+(m+3)L^{k+2}} e^{(m+3)L^{k+2}(\omega(x)+\omega(y)+\omega(\xi))} \times \\
& \quad \times e^{-3\frac{n}{j} \varphi^{\ast}(\frac{j}{n})}  C''_{\ell} e^{-((m+3)L^{k+2}+1)\omega(y)} dy \Big) d\xi \Big) D e^{-((m+3)L^{k+2}+1)\omega(x)} dx.
\end{align*}
Take $s \in \mathbb{N}_0$ such that $sC_2 \geq (m+3)L^{k+2}+1$. 
Choose $\ell \geq sC_1$ so that the series depending on $\eta \in \mathbb{N}_0^d$
\begin{equation*}
\sum_{\eta \in \mathbb{N}_0^d} e^{-sC_1\varphi^{\ast}\big(\frac{|\eta|}{sC_1}\big)} e^{\ell L \varphi^{\ast}\big(\frac{|\eta|}{\ell L}\big)}
\end{equation*}
converges (see~\cite[(3.6)]{AJ}). 
The constant depending on $n$ is $D_nD'_n$.
We get for $j_l \leq N+1 < j_{l+1}$, the following estimate for the integral in $a)$:
\[ E_\ell \Big( \int e^{-\omega(x)} dx \Big) \Big( \int e^{-\omega(y)} dy \Big) \Big( \int e^{-\omega(\xi)} d\xi \Big) \Big( \sum_{n=l}^{\infty} \sum_{j=j_n}^{j_{n+1}-1} \frac{D_n D'_n}{(2R)^{\rho j} e^{3\frac{n}{j}\varphi^{\ast}(\frac{j}{n})}}\Big), \]
where $E_{\ell}>0$ is a constant depending on $\ell$. 
The last 3 integrals converge by property ($\gamma$) of the weight function. By assumption, we have $3\frac{n}{j}\varphi^{\ast}\big(\frac{j}{n}\big) \geq \log D_n + \log D'_n + n$. This finally proves that the integral in $a)$ converges to zero as $N$ tends to infinity.

For the limit in $b)$, we can proceed as in~\cite[Proposition 4.8]{AJ} with the above techniques.
\end{proof}

The next example recovers~\cite[Example 4.9]{AJ} for $\tau=0$. The proof is straightforward and is left to the reader.
\begin{exam}\label{ExampleFormalSumTau}{\rm
	Let $a(x,y,\xi)$ be an amplitude in $\GA^{m,\omega}_{\rho}$ and let
	\[ p_j(x,\xi):= \sum_{|\beta+\gamma|=j} \frac1{\beta!\gamma!} \tau^{|\beta|} (1-\tau)^{|\gamma|} \partial^{\beta+\gamma}_{\xi} (-D_x)^{\beta} D^{\gamma}_y \left. a(x,y,\xi) \right|_{y=x}. \]
	Then, the series $\sum_{j=0}^{\infty} p_j(x,\xi)$ is a formal sum in $\FGS^{\max\{m,mL\},\omega}_{\rho}$ for all $\tau \in \mathbb{R}$.}
\end{exam}

The following lemma is taken from~\cite[Lemma 3.11]{FGJ2005pseudo}.
\begin{lema}\label{LemmaPrevioLarguisimo}
Let $m \geq n$ and $\frac1{e} e^{\frac{m}{j} \varphi^{\ast}(\frac{j}{m})} \leq t \leq e^{\frac{n}{j} \varphi^{\ast}(\frac{j}{n})}$ for $t>0$. Then 
\[ e^{n \varphi^{\ast}(\frac{j}{n})} \geq e^{(n-1)\omega(t)} e^{2n \varphi^{\ast}(\frac{j}{2n})}, \]
for $j$ large enough.
\end{lema}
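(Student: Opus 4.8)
The plan is to first strip off the dependence on $t$ and then reduce to a clean scalar inequality about $\varphi^{\ast}$, which I would prove by Legendre/Young duality.

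\textbf{Step 1 (reduction in $t$).} Both $e^{n\varphi^{\ast}(j/n)}$ and $e^{2n\varphi^{\ast}(j/(2n))}$ are independent of $t$, while $e^{(n-1)\omega(t)}$ is non-decreasing in $t$, so it suffices to verify the inequality at the largest admissible value $t=e^{\frac nj\varphi^{\ast}(\frac jn)}$. The lower bound on $t$ together with ``$j$ large enough'' is used only to guarantee $t\ge 1$ (note $\frac mj\varphi^{\ast}(\frac jm)=\varphi^{\ast}(\frac jm)/(\frac jm)\to+\infty$ because $\varphi^{\ast}(s)/s$ is increasing and unbounded), hence $\omega(t)=\varphi_\omega(\log t)$, and at the endpoint $\omega(t)=\varphi_\omega\!\big(\tfrac nj\varphi^{\ast}(\tfrac jn)\big)$. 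Taking logarithms, writing $u:=j/n$, and dividing by $n$, the case $n=1$ being exactly $\varphi^{\ast}(j)\ge 2\varphi^{\ast}(j/2)$ from \eqref{EqEstimationVarphi}, and using $\tfrac{n-1}{n}<1$ for $n\ge 2$ together with $\varphi_\omega\ge 0$, the whole statement follows once we show
\[ \varphi^{\ast}(u)-2\varphi^{\ast}(u/2)\ \ge\ \varphi_\omega\!\Big(\frac{\varphi^{\ast}(u)}{u}\Big)\qquad\text{for all $u$ large enough.} \]

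\textbf{Step 2 (the scalar inequality).} Set $\psi:=(\varphi^{\ast})'$, which is non-decreasing by convexity of $\varphi^{\ast}$, with $\varphi^{\ast}(v)=\int_0^v\psi(s)\,ds$. Since $\varphi^{\ast}(u)/u=\tfrac1u\int_0^u\psi(s)\,ds\le\psi(u)$, there is $v^{\ast}\in(0,u]$ with $\psi(v^{\ast})=\varphi^{\ast}(u)/u$. Because $\varphi^{**}=\varphi_\omega$, the equality case of Young's inequality gives $\varphi_\omega(\psi(v^{\ast}))=v^{\ast}\psi(v^{\ast})-\varphi^{\ast}(v^{\ast})=\int_0^{v^{\ast}}[\psi(v^{\ast})-\psi(s)]\,ds=\int_0^{v^{\ast}}r\,d\psi(r)$ after Fubini. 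On the other hand, shifting one integral, $\varphi^{\ast}(u)-2\varphi^{\ast}(u/2)=\int_0^{u/2}[\psi(s+u/2)-\psi(s)]\,ds=\int_0^{u}\mu(r)\,d\psi(r)$ with the triangular weight $\mu(r)=r$ on $[0,u/2]$ and $\mu(r)=u-r$ on $[u/2,u]$. Thus the claim becomes $\int_0^{v^{\ast}}r\,d\psi(r)\le\int_0^u\mu(r)\,d\psi(r)$. If $v^{\ast}\le u/2$ this is immediate since $\mu(r)=r$ there and $\mu,d\psi\ge 0$. If $v^{\ast}\in(u/2,u]$, the difference of the two sides equals $\int_{u/2}^{v^{\ast}}(u-2r)\,d\psi(r)+\int_{v^{\ast}}^u(u-r)\,d\psi(r)$, so it remains to show $\int_{v^{\ast}}^u(u-r)\,d\psi(r)\ge\int_{u/2}^{v^{\ast}}(2r-u)\,d\psi(r)$.

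\textbf{Step 3 (the obstacle).} The case $v^{\ast}>u/2$ is precisely the regime where $(\varphi^{\ast})'$ is strongly convex — i.e. $\varphi^{\ast}$ grows very fast and $\omega$ slowly — so that $v^{\ast}$ can approach $u$ and the margin is only of lower order; this is the one genuinely delicate point. I would estimate the two tails against one another using monotonicity of $\psi$ and, crucially, the non-quasianalyticity $(\beta)$, which forces $\omega(t)=o(t)$ and hence a growth restriction on $\varphi^{\ast}$ that prevents $u-v^{\ast}$ from being negligible compared with $2v^{\ast}-u$ once $u$ is large; property $(\alpha)$ in the form of Lemma~\ref{lemmafistrella} can be used to make these comparisons quantitative, and it is to absorb the resulting lower-order error terms that ``for $j$ large enough'' (equivalently $u=j/n$ large) is needed. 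Everything else — the reductions, the Young/Fubini identities, and the easy case $v^{\ast}\le u/2$ — is routine bookkeeping with $\varphi^{\ast}$ and its conjugate.
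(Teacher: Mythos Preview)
The paper does not prove this lemma; it simply cites \cite[Lemma~3.11]{FGJ2005pseudo}. So there is no ``paper's proof'' to compare against, and your attempt has to stand on its own.

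Your Steps~1 and~2 are correct and rather elegant: the reduction to the endpoint $t=e^{\frac nj\varphi^{\ast}(j/n)}$, the passage to the scalar inequality in $u=j/n$, and the integral identities expressing both $\varphi_\omega(\varphi^{\ast}(u)/u)$ and $\varphi^{\ast}(u)-2\varphi^{\ast}(u/2)$ as weighted integrals of $d\psi$ are all sound, and the case $v^{\ast}\le u/2$ is indeed immediate.

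The genuine gap is Step~3. You explicitly stop short of an argument (``I would estimate\ldots'') and the case $v^{\ast}>u/2$ really does occur for admissible weights: e.g.\ for $\omega(t)=(\log t)^{p'}$ with $1<p'<2$ one has $\varphi^{\ast}(v)=C v^{p}$ with $p>2$ and $v^{\ast}=u/p^{1/(p-1)}>u/2$. Your proposed tool, non-quasianalyticity~$(\beta)$, is not obviously relevant here: $(\beta)$ bounds $\omega$ from \emph{above}, i.e.\ makes $\varphi^{\ast}$ grow \emph{faster}, which is precisely the direction that pushes $v^{\ast}$ towards $u$ and makes the tail comparison harder, not easier. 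The heuristic you give (``prevents $u-v^{\ast}$ from being negligible'') therefore points the wrong way. Also, by throwing away the factor $(n-1)/n$ in Step~1 you are proving a strictly stronger statement than required; retaining it gives you a fixed margin $1-\tfrac{n-1}{n}=\tfrac1n>0$ that could absorb lower-order errors, whereas your formulation demands the sharp inequality $\varphi^{\ast}(u)-2\varphi^{\ast}(u/2)\ge \varphi_\omega(\varphi^{\ast}(u)/u)$ for all large $u$, for which you have not supplied an argument.

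In short: the reductions are fine, but the proof is incomplete at the decisive point, and the sketch you offer for closing it invokes the wrong hypothesis.
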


These two lemmas are easy to prove.
\begin{lema}\label{NotaInequality1}
Let $\tau \in \mathbb{R}$ and let $k \in \mathbb{N}_0$ as in~\eqref{EqDefinitionk}. Then we have
\[ \omega(x,y) \leq L^2\omega((1-\tau)x + \tau y) + L^{k+2}\omega(y-x) + \sum_{j=1}^{k+2}L^j, \qquad x,y \in \mathbb{R}^d. \]
\end{lema}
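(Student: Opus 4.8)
The plan is to write $(x,y)$ as an affine combination involving $(1-\tau)x+\tau y$ and $y-x$, and then repeatedly apply property $(\alpha)$ of the weight function in the form of the estimate~\eqref{EqOmegaDiff}. First I would observe the elementary identities
\[ x = \big((1-\tau)x+\tau y\big) - \tau(y-x), \qquad y = \big((1-\tau)x+\tau y\big) + (1-\tau)(y-x), \]
so that each of $x$ and $y$ is the sum of the vector $(1-\tau)x+\tau y$ and a scalar multiple (namely $-\tau$, resp. $1-\tau$) of $y-x$. Since $|\tau|+|1-\tau|\le 2^k$ by~\eqref{EqDefinitionk}, both scalars $|\tau|$ and $|1-\tau|$ are at most $2^k$, hence $\tau(y-x)$ and $(1-\tau)(y-x)$ have norm at most $2^k|y-x|$; using that $\omega$ is radial and increasing together with $k$ applications of property $(\alpha)$ (exactly as in~\eqref{EqWeightTau}), we get $\omega(\tau(y-x))$ and $\omega((1-\tau)(y-x))$ bounded by $L^k\omega(y-x)+\sum_{j=1}^k L^j$.

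Next I would estimate $\omega(x,y)=\omega(|(x,y)|)$. Using $|(x,y)|\le |x|+|y|$ (or, more cleanly, bounding $\omega(x,y)$ by $\omega$ of the sum of the two component vectors after noting $|(x,y)|^2=|x|^2+|y|^2$ and applying monotonicity), one reduces to controlling $\omega(x)$ and $\omega(y)$ separately and then combining them with one further use of~\eqref{EqOmegaDiff}. Applying~\eqref{EqOmegaDiff} to $x=\big((1-\tau)x+\tau y\big)+(-\tau(y-x))$ gives
\[ \omega(x)\le L\big(\omega((1-\tau)x+\tau y)+\omega(\tau(y-x))+1\big)\le L\,\omega((1-\tau)x+\tau y)+L^{k+1}\omega(y-x)+\textstyle\sum_{j=2}^{k+1}L^j+L, \]
and similarly for $\omega(y)$ with the same bound (using $|1-\tau|\le 2^k$ in place of $|\tau|\le 2^k$). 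Then one more application of~\eqref{EqOmegaDiff} to combine $\omega(x)$ and $\omega(y)$ into $\omega(x,y)$ picks up an extra factor $L$, producing the factors $L^2$ in front of $\omega((1-\tau)x+\tau y)$ and $L^{k+2}$ in front of $\omega(y-x)$, with the additive constant absorbed into $\sum_{j=1}^{k+2}L^j$ (recall $L\ge 1$, so the geometric-type sums telescope into this form).

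There is essentially no obstacle here: the lemma is purely a bookkeeping exercise in tracking how many times property $(\alpha)$ (equivalently the constant $L$) is invoked. The only point requiring a little care is the order in which one splits the norms and applies~\eqref{EqOmegaDiff}, so that the final exponents on $L$ come out as $L^2$ and $L^{k+2}$ rather than something larger; choosing the decomposition above — first split $(x,y)$ into $x$ and $y$, then split each of $x,y$ into the affine-combination term plus a multiple of $y-x$, then apply property $(\alpha)$ $k$ times to the multiple of $y-x$ — gives exactly the stated constants. Monotonicity and radiality of $\omega$, together with $\omega|_{[0,1]}\equiv 0$ so that no spurious constant appears from small arguments, complete the proof.
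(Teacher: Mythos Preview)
The paper does not give a proof of this lemma (it simply says ``these two lemmas are easy to prove''), and your overall strategy---write $x$ and $y$ as $(1-\tau)x+\tau y$ plus a multiple of $y-x$, use $|\tau|,|1-\tau|\le 2^k$, and iterate property $(\alpha)$---is the natural one and works.

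There is, however, a small bookkeeping slip in your final claim. If you first use~\eqref{EqOmegaDiff} to split $\omega(x,y)\le L(\omega(x)+\omega(y)+1)$ and then bound \emph{both} $\omega(x)$ and $\omega(y)$ by $L\,\omega((1-\tau)x+\tau y)+L^{k+1}\omega(y-x)+\sum_{j=1}^{k+1}L^j$, you end up with $2L^2$ and $2L^{k+2}$ in front of the two $\omega$-terms, not $L^2$ and $L^{k+2}$. To get the stated constants, decompose directly in $\mathbb{R}^{2d}$:
\[
(x,y)=\big((1-\tau)x+\tau y,\ (1-\tau)x+\tau y\big)+\big(-\tau(y-x),\ (1-\tau)(y-x)\big).
\]
One application of~\eqref{EqOmegaDiff} then gives $\omega(x,y)\le L\big[\omega(\sqrt{2}\,|(1-\tau)x+\tau y|)+\omega(\sqrt{\tau^2+(1-\tau)^2}\,|y-x|)+1\big]$. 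Since $\sqrt{2}\le 2$ and $\sqrt{\tau^2+(1-\tau)^2}\le |\tau|+|1-\tau|\le 2^k$, one more use of $(\alpha)$ on the first term and $k$ uses on the second yield exactly $L^2\omega((1-\tau)x+\tau y)+L^{k+1}\omega(y-x)+\sum_{j=1}^{k+1}L^j+L^2$, which is bounded by the right-hand side of the lemma because $L\ge1$.
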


\begin{lema}\label{NotaInequality2}
For all $\tau \in \mathbb{R}$, the inequality
\[ |v|^2 \leq C(|v+t\tau w|^2 + |v-t(1-\tau)w|^2) \]
holds for all $v,w \in \mathbb{R}^d$, $0\leq t \leq 1$, where $C=2\max\{(1-\tau)^2,\tau^2\}$.
\end{lema}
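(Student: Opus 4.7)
The plan is to exhibit $v$ as an explicit convex (actually affine) combination of the two vectors appearing on the right-hand side, and then apply the elementary inequality $(a+b)^2 \le 2(a^2+b^2)$ together with the triangle inequality. The key algebraic identity, which is valid for every $\tau \in \mathbb{R}$ and every $t \in \mathbb{R}$, is
\[
(1-\tau)\bigl(v + t\tau w\bigr) + \tau\bigl(v - t(1-\tau) w\bigr) = v,
\]
since the coefficient of $w$ is $(1-\tau)t\tau - \tau t(1-\tau) = 0$, while the coefficient of $v$ is $(1-\tau)+\tau=1$. Note that the parameter $t$ plays no role in this identity; in particular the hypothesis $0 \le t \le 1$ is not actually used, though the statement is phrased with it.

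From the identity, first apply the triangle inequality to obtain
\[
|v| \le |1-\tau|\,|v + t\tau w| + |\tau|\,|v - t(1-\tau) w|.
\]
Then square both sides and use $(a+b)^2 \le 2(a^2+b^2)$ to get
\[
|v|^2 \le 2\bigl((1-\tau)^2 |v + t\tau w|^2 + \tau^2 |v - t(1-\tau) w|^2\bigr).
\]
Finally, bounding both $(1-\tau)^2$ and $\tau^2$ by $\max\{(1-\tau)^2,\tau^2\}$ and factoring yields precisely
\[
|v|^2 \le 2\max\{(1-\tau)^2,\tau^2\}\bigl(|v + t\tau w|^2 + |v - t(1-\tau) w|^2\bigr),
\]
which is the desired inequality with $C = 2\max\{(1-\tau)^2,\tau^2\}$.

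There is essentially no obstacle here: the whole proof is a one-line algebraic identity combined with two textbook inequalities, and the required constant appears transparently. The only thing worth a small sanity check is the borderline cases $\tau=0$ and $\tau=1$, where one of the two terms on the right reduces to $|v|^2$ itself, the coefficient on that term becomes $2$, and the inequality is trivially satisfied.
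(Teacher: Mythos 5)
Your proof is correct. The paper states this lemma without proof (it says only ``These two lemmas are easy to prove''), and your argument---the affine identity $(1-\tau)(v+t\tau w)+\tau(v-t(1-\tau)w)=v$, followed by the triangle inequality, the bound $(a+b)^2\le 2(a^2+b^2)$, and bounding both coefficients by their maximum---is exactly the natural one-line argument the author had in mind, producing the stated constant $C=2\max\{(1-\tau)^2,\tau^2\}$. Your observation that the hypothesis $0\le t\le 1$ is not needed is also accurate; it is simply the range of $t$ in which the lemma is later applied (to the integrand in $\omega_{\beta\gamma}$).
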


The following  result shows that any pseudodifferential operator can be written as a quantization modulo an $\omega$-regularizing operator and is needed to understand the composition of two different quatizations in the next section. For the proof, it is fundamental the fact that the kernel $K$ of a pseudodifferential operator behaves like a function in $\Sch_\omega(\R^d)$ in the complement of a strip $\Delta_r = \{ (x,y) \in \mathbb{R}^{2d} : |x-y| < r \}$ around the diagonal of $\R^{2d}$, for some $r>0$. In other words, if $\chi \in \mathcal{E}_{\omega}(\mathbb{R}^{2d})$ is such that $\chi(x,y)=1$ for $(x,y) \in \mathbb{R}^{2d} \setminus \Delta_{2r}$ and $\chi(x,y)= 0$ for $(x,y) \in \overline{\Delta_r}$, then $\chi K\in\mathcal{S}_{\omega}(\mathbb{R}^{2d})$~\cite[Lemma 5.1, Theorem 5.2]{AJ}.

\begin{theo}\label{TheoLarguisimoTau}
Let $a(x,y,\xi)$ be an amplitude in $\GA^{m,\omega}_{\rho}$ with associated pseudodifferential operator $A$. Then, for any $\tau \in \mathbb{R}$, we can write $A$ uniquely as
\[ A = P_{\tau} + R, \]
where $R$ is an $\omega$-regularizing operator and $P_{\tau}$ is the pseudodifferential operator given by
\[ P_{\tau}u(x) = \iint e^{i(x-y)\cdot \xi} p_{\tau}((1-\tau)x+\tau y, \xi) u(y) dy d\xi, \qquad u \in \mathcal{S}_{\omega}(\mathbb{R}^d), \]
being $p_{\tau} \in \GS^{\max\{m,mL\},\omega}_{\rho}$. Moreover, we have
\[ p_{\tau}(x,\xi) \sim \sum_{j=0}^{\infty} p_j(x,\xi) = \sum_{j=0}^{\infty} \sum_{|\beta+\gamma|=j} \frac1{\beta!\gamma!} \tau^{|\beta|} (1-\tau)^{|\gamma|} \partial^{\beta+\gamma}_{\xi} (-D_x)^{\beta} D^{\gamma}_y \left. a(x,y,\xi)\right|_{y=x}.\]
\end{theo}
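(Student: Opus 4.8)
The plan is to combine the local structure of the kernel (via the strip $\Delta_r$) with the formal-sum machinery of Lemmas~\ref{LemmaAmplitudeRevisited} and~\ref{LemmaSymbolRevisited}. First I would fix $\tau \in \mathbb{R}$ and split the amplitude $a(x,y,\xi)$ using a cut-off $\chi \in \mathcal{E}_\omega(\mathbb{R}^{2d})$ that is $1$ off the strip $\Delta_{2r}$ and $0$ on $\overline{\Delta_r}$: write $a = (1-\chi)a + \chi a$. For the second piece, the kernel of the associated operator is $\chi(x,y) K(x,y)$ which by~\cite[Lemma 5.1, Theorem 5.2]{AJ} belongs to $\mathcal{S}_\omega(\mathbb{R}^{2d})$; hence by Proposition~\ref{PropExampleExtensionPseudodifferentialOperator} that operator is $\omega$-regularizing, and it becomes part of $R$. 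So it suffices to treat an amplitude supported in $\{|x-y| \le 2r\}$, i.e. we may assume $\langle x-y\rangle$ is bounded on the support of $a$; this is what makes the factor $\langle x-y\rangle^{\rho|\alpha+\beta+\gamma|}$ in Definition~\ref{ga} harmless and lets us treat $a$ essentially like a symbol in the relevant estimates.

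Next I would introduce the formal sum $\sum_j p_j$ exactly as in Example~\ref{ExampleFormalSumTau}, which tells us $\sum_j p_j \in \FGS^{\max\{m,mL\},\omega}_\rho$, and set $p_\tau(x,\xi) := \sum_{j=0}^\infty \varphi_j(x,\xi) p_j(x,\xi)$, which by~\cite[Theorem 4.6]{AJ} is a genuine symbol in $\GS^{\max\{m,mL\},\omega}_\rho$ equivalent to $\sum_j p_j$. The heart of the argument is then to show that the operator $A$ and the $\tau$-quantization $P_\tau$ of $p_\tau$ differ by an $\omega$-regularizing operator. By Lemma~\ref{LemmaAmplitudeRevisited}, $A = \mathcal{S}_\omega\text{-}\sum_j A_j$ where $A_j$ has amplitude $(\varphi_j-\varphi_{j+1})((1-\tau)x+\tau y,\xi)\,a(x,y,\xi)$; by Lemma~\ref{LemmaSymbolRevisited}, $P_\tau$ is the limit in $L(\mathcal{S}_\omega,\mathcal{S}'_\omega)$ of the operators $S_{N,\tau}$ with amplitude $\sum_{j=0}^N (\varphi_j-\varphi_{j+1})((1-\tau)x+\tau y,\xi)\big(\sum_{l=0}^j p_l((1-\tau)x+\tau y,\xi)\big)$. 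Thus $A - P_\tau$ is (in the sense of these limits) given by the amplitude $\sum_j (\varphi_j-\varphi_{j+1})((1-\tau)x+\tau y,\xi)\big(a(x,y,\xi) - \sum_{l=0}^j p_l((1-\tau)x+\tau y,\xi)\big)$. The task is to show this defines an operator with kernel in $\mathcal{S}_\omega(\mathbb{R}^{2d})$.

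To do that I would take a finite Taylor expansion of $a(x,y,\xi)$ in $(y-x)$ around $y=x$, of order comparable to $j$, matching the definition of the $p_l$: the partial sum $\sum_{l=0}^j p_l$ is, up to the $\varphi$-cutoffs, precisely the truncated symbol obtained from Kuranishi-type stationary-phase/Taylor manipulation, so the difference $a - \sum_{l=0}^j p_l$ carries a remainder factor of the form $(y-x)^{\delta}$ with $|\delta| > j$, controlled by an integral form of Taylor's theorem. Here the support condition $|x-y|\le 2r$ kicks in together with the region $\log(\langle(x,\xi)\rangle/R) \geq \tfrac{n}{j}\varphi^\ast(\tfrac{j}{n})$ on which $\varphi_j \neq 0$: on that region $\langle(x,\xi)\rangle$ is large like $e^{(n/j)\varphi^\ast(j/n)}$, and one gains a factor $\langle(x,\xi)\rangle^{-\rho j} \leq (2R)^{-\rho j} e^{-3(n/j)\varphi^\ast(j/n)}$ exactly as in the proof of Lemma~\ref{LemmaSymbolRevisited}. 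Integrating by parts in $\xi$ with $G^s(-D_\xi)$ (using~\eqref{IntPartsY} in the $x-y$ variable to produce inverse powers $G^{-s}(\xi)$ and the estimates~\eqref{EqEst7} of Proposition~\ref{PropPropertiesUltradiffOperator}) and summing the geometric series $\sum_j (2R)^{-\rho j}$ against the $\varphi^\ast$-convexity bounds of Lemma~\ref{lemmafistrella} — together with Lemma~\ref{NotaInequality1} to pass from $\omega(x,y)$ to $\omega((1-\tau)x+\tau y)$ and $\omega(y-x)$, and Lemma~\ref{NotaInequality2} to control the integration variable after the change $y \mapsto x+t(y-x)$ — one obtains rapid decay of the kernel of $A-P_\tau$ in all variables together with all its derivatives. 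The Fourier side is handled symmetrically (or via the stability of $\mathcal{S}_\omega$ under Fourier transform), so the kernel lies in $\mathcal{S}_\omega(\mathbb{R}^{2d})$ and $R := A - P_\tau$ is $\omega$-regularizing.

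Finally, uniqueness: if $A = P_\tau + R = P'_\tau + R'$ with both $P_\tau, P'_\tau$ being $\tau$-quantizations of symbols in $\GS^{\max\{m,mL\},\omega}_\rho$ and $R,R'$ $\omega$-regularizing, then $P_\tau - P'_\tau$ is $\omega$-regularizing; since the $\tau$-quantization map from symbols to operators is injective modulo $\omega$-regularizing operators (the symbol can be recovered from the kernel, or equivalently from the action on exponentials $e^{ix\cdot\xi}$, modulo $\mathcal{S}_\omega$), the two symbols are equivalent in $\GS^{\max\{m,mL\},\omega}_\rho$, hence $p_\tau$ is uniquely determined up to equivalence, which is exactly the asserted uniqueness. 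I expect the main obstacle to be the bookkeeping in the remainder estimate: keeping the Taylor-remainder order, the $\varphi_j$-cutoff thresholds, the choice of the power $s$ of $G$, and the auxiliary index $\ell$ all simultaneously compatible so that every series (over $\eta$, over $j$, and the spatial integrals) converges — essentially the same delicate balancing as in the proof of Lemma~\ref{LemmaSymbolRevisited}, but now with the extra Taylor-remainder gain in $(y-x)$ that must be converted into the decay $\langle(x,\xi)\rangle^{-\rho N}$ needed for $\mathcal{S}_\omega$-membership of the kernel rather than mere convergence.
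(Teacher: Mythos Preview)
Your strategy coincides with the paper's: build $p_\tau=\sum_j\varphi_j p_j$ from Example~\ref{ExampleFormalSumTau} and~\cite[Theorem~4.6]{AJ}, write $A-P_\tau$ through Lemmas~\ref{LemmaAmplitudeRevisited} and~\ref{LemmaSymbolRevisited}, and show its kernel lies in $\mathcal{S}_\omega(\mathbb{R}^{2d})$ by combining a Taylor expansion in $w=x-y$ with the annular support of the cutoffs $\varphi_j$ and ultradifferential integration by parts; uniqueness via the Fourier relation between kernel and $\tau$-symbol is also the paper's argument. Your preliminary reduction (replacing $a$ by $(1-\chi)a$) is equivalent to the paper's later splitting $K=\chi K+(1-\chi)K$, since $\chi$ vanishes near the diagonal and the $p_j$ are therefore unaffected.

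Two places where your sketch is too optimistic deserve flagging, precisely because they are the ``bookkeeping'' you anticipate. First, the Taylor expansion does not collapse to a single remainder: when one expands the kernel $\sum_j K_j$, the $D_\xi$-derivatives hitting $\varphi_j$ versus the amplitude split the problem into three qualitatively different pieces $I_j$, $Q_j$, $W_N$ (in the paper's notation). The boundary piece $W_N$ carries \emph{no} Taylor-remainder gain in $\langle(v,\xi)\rangle^{-\rho j}$; it is not summed but must be shown to tend to zero in $L(\mathcal{S}_\omega,\mathcal{S}'_\omega)$ as $N\to\infty$, using the compact annular support of $D^\alpha_\xi\varphi_{N+1}$ and the growth of the sequence $(j_n)_n$---this is a separate weak-limit argument absent from your outline. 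Second, you conflate the two integration-by-parts identities. For the kernel estimates on $I_j$ and $Q_j$ the paper uses~\eqref{EqIntegrationByParts1} (i.e.\ $G^s(-D_\xi)$, which produces $G^{-s}(y-x)$, not $G^{-s}(\xi)$); together with Lemma~\ref{NotaInequality1} this gives decay in $(x,y)$. Formula~\eqref{IntPartsY} (i.e.\ $G^s(-D_y)$, giving $G^{-s}(\xi)$) appears only in the $W_N$ step to make the $d\xi$-integral converge after pairing with test functions. Under your reduction $|w|\le 2r$ the factor $G^{-s}(y-x)$ is merely bounded, so you still need the mechanism---Lemma~\ref{LemmaPrevioLarguisimo}---that converts the annular gain $\langle(v,\xi)\rangle^{-\rho j}$ into $e^{-\lambda\omega(v)}$ for arbitrary $\lambda$; that lemma should appear explicitly in your plan.
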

The symbol $p_{\tau}(x,\xi)$ is called \emph{$\tau$-symbol of the pseudodifferential operator $A$}. When $\tau=0,1,1/2$, these symbols are called the \emph{left, right, and Weyl symbols of $A$}.
\begin{proof}
We consider the sequence $(j_n)_n$ as in the statement of Lemma~\ref{LemmaSymbolRevisited}, with $\frac{n}{j}\varphi^{\ast}\big(\frac{j}{n}\big) \geq \max\{n, \log(C_{4nL^{\widetilde{p}+3}}), \log(D_{4nL^{\widetilde{p}+3}}) \}$, where $(C_n)_n$ and $(D_n)_n$ denote the sequences of constants that come from Definition~\ref{ga} and Corollary~\ref{CorSymbolAmplitudeTau}, and $\widetilde{p}\in\mathbb{N}_0$ is so that $\max\{|1-\tau|,2|\tau|\} \leq e^{\widetilde{p}}$. Put
\[ p_j(x,\xi) := \sum_{|\beta+\gamma|=j} \frac1{\beta!\gamma!} \tau^{|\beta|} (1-\tau)^{|\gamma|} \partial^{\beta+\gamma}_{\xi} (-D_x)^{\beta} D^{\gamma}_y \left. a(x,y,\xi) \right|_{y=x}. \]
By Example~\ref{ExampleFormalSumTau}, $\sum_j p_j \in \FGS^{\max\{m, mL\},\omega}_{\rho}$. Now, we write
\[ p_{\tau}(x,\xi) := \sum_{j=0}^{\infty} \varphi_j(x,\xi) p_j(x,\xi), \]
where $(\varphi_j)_j$ is the sequence described in~\eqref{psi-function}. By~\cite[Theorem 4.6]{AJ} we obtain that $p_{\tau}(x,\xi) \in \GS^{\max\{m, mL\},\omega}_{\rho}$ and $p_{\tau} \sim \sum p_j$. We set, for $u \in \mathcal{S}_{\omega}(\mathbb{R}^d)$,
\[ P_{\tau}u(x) := \iint e^{i(x-y)\cdot \xi} p_{\tau}((1-\tau)x + \tau y, \xi) u(y) dy d\xi. \]
By Lemma~\ref{LemmaSymbolRevisited}, $P_{\tau}$ is the limit of $S_{N,\tau}$ in $L(\mathcal{S}_{\omega}(\mathbb{R}^d),\mathcal{S}'_{\omega}(\mathbb{R}^d))$, where $S_{N,\tau}$ is the pseudodifferential operator with amplitude $\sum_{j=0}^N (\varphi_j-\varphi_{j+1})((1-\tau)x+\tau y,\xi)(\sum_{l=0}^j p_l((1-\tau)x+\tau y,\xi))$ in $\GA^{ \max\{0,m'L\}, \omega}_{\rho}$, $m'$ as in~\eqref{EqDefinitionmprime}. On the other hand, from Lemma~\ref{LemmaAmplitudeRevisited}, $A=\sum_{N=0}^{\infty} A_N$, where $A_N$ is the pseudodifferential operator with amplitude $a(x,y,\xi) (\varphi_N - \varphi_{N+1})((1-\tau)x+\tau y,\xi)$ in $\GA^{m,\omega}_{\rho} \subseteq \GA^{ \max\{0, m'L\}, \omega}_{\rho}$. Thus, for $u \in \mathcal{S}_{\omega}(\mathbb{R}^d)$,
\[ Au(x) = \sum_{N=0}^{\infty} \iint e^{i(x-y)\cdot \xi} (\varphi_N-\varphi_{N+1})((1-\tau)x+\tau y,\xi) a(x,y,\xi) u(y) dy d\xi \]
and
\[ P_{\tau}u(x) = \lim_{N \to \infty} \iint e^{i(x-y)\cdot \xi} \big[ \sum_{j=0}^N (\varphi_j - \varphi_{j+1})((1-\tau)x+\tau y,\xi) \big( \sum_{l=0}^j p_l((1-\tau)x+\tau y, \xi) \big)\big] u(y) dy d\xi. \]
Hence, we can write $A-P_{\tau}$ as the series $\sum_{N=0}^{\infty} P_{N,\tau}$, where each $P_{N,\tau}$ corresponds to the pseudodifferential operator associated to the amplitude in $\GA^{ \max\{0,m'L\}, \omega}_{\rho}$:
\[ (\varphi_N - \varphi_{N+1})((1-\tau)x+\tau y,\xi) \Big( a(x,y,\xi) - \sum_{j=0}^N p_j((1-\tau)x+\tau y,\xi) \Big). \]
Our purpose is to show that the kernel $K$ of $A-P_{\tau}$ belongs to $\mathcal{S}_{\omega}(\mathbb{R}^{2d})$. To that purpose, we write
\begin{align*}
& K(x,y)=\sum_{N=0}^{\infty} K_N(x,y) \\
& \quad = \sum_{N=0}^{\infty} \int e^{i(x-y)\cdot \xi} (\varphi_N - \varphi_{N+1})((1-\tau)x+\tau y,\xi) \Big( a(x,y,\xi) - \sum_{j=0}^N p_j((1-\tau)x+\tau y,\xi) \Big) d\xi.
\end{align*}
Now, we take $r>0$ and $\chi \in \mathcal{E}_{\omega}(\mathbb{R}^{2d})$ such that $\chi \equiv 1$ in $\mathbb{R}^{2d} \setminus \Delta_{2r}$, and $\chi \equiv 0$ in $\overline{\Delta_r}$ (see~\cite[Lemma 5.1]{AJ}). Then we can write
\[ K = \chi K + (1-\chi) \lim_{N \to \infty} \sum_{j=0}^N K_j. \]

We follow the lines of \cite[Theorem 23.2]{Shu2001pseudo}, and also the scheme of the proof of \cite[Theorem~3.13]{FGJ2005pseudo}, as well as \cite[Theorem 5.4]{AJ}. We make the following change of variables:
\begin{equation*}
v = (1-\tau)x + \tau y; \qquad  w = x-y.
\end{equation*}
Similarly as in \cite[Theorem 3.13]{FGJ2005pseudo}, we write the partial sums of $K$ as
\[ \sum_{j=0}^N K_j =K_0 + \sum_{j=1}^N I_j + \sum_{j=1}^N Q_j - W_N, \]
where
\begin{align}
\nonumber I_j(x,y) &:= \sum_{|\beta+\gamma|=j} \sum_{0 \neq \alpha \leq \beta+\gamma} \frac{(\beta+\gamma)!}{\beta! \gamma!} \frac1{\alpha! (\beta+\gamma-\alpha)!} \times \\ \nonumber
& \quad \times \int e^{i(x-y)\cdot \xi} \tau^{|\beta|} (1-\tau)^{|\gamma|} (-1)^{|\gamma|} D^{\alpha}_{\xi} \varphi_j (v,\xi) \big( \partial^{\beta}_x \partial^{\gamma}_y D^{\beta+\gamma-\alpha}_{\xi} a\big)(v,v,\xi) d\xi; \\ \nonumber
Q_j(x,y) &:= \sum_{|\beta+\gamma|=j+1} \sum_{\alpha \leq \beta+\gamma} \frac{(\beta+\gamma)!}{\beta! \gamma!} \frac1{\alpha! (\beta+\gamma-\alpha)!} \tau^{|\beta|} (1-\tau)^{|\gamma|} (-1)^{|\gamma|}\times \\
\nonumber& \quad \times \int e^{i(x-y)\cdot \xi} D^{\alpha}_{\xi}(\varphi_j - \varphi_{j+1})(v,\xi) D^{\beta+\gamma-\alpha}_{\xi} \omega_{\beta \gamma}(x,y,\xi) d\xi; \\ \label{EqDefOmegaBetaGamma}
\omega_{\beta \gamma}(x,y,\xi) &:= (N+1) \int_0^1 \big( \partial^{\beta}_x \partial^{\gamma}_y a\big)(v+t\tau w, v-(1-\tau)tw, \xi) (1-t)^N dt; \\ \nonumber
W_N(x,y) &:= \sum_{|\beta+\gamma|=1}^N \sum_{0 \neq \alpha \leq \beta+\gamma} \frac{(\beta+\gamma)!}{\beta! \gamma!} \frac1{\alpha! (\beta+\gamma-\alpha)!} \times \\ \nonumber
& \quad \times \int e^{i(x-y)\cdot \xi} \tau^{|\beta|} (1-\tau)^{|\gamma|} (-1)^{|\gamma|} D^{\alpha}_{\xi} \varphi_{N+1}(v,\xi) \big( \partial^{\beta}_x \partial^{\gamma}_y D^{\beta+\gamma-\alpha}_{\xi} a\big)(v,v,\xi) d\xi.
\end{align}
We have $\chi K \in \mathcal{S}_{\omega}(\mathbb{R}^{2d})$~\cite[Lemma 5.1, Theorem 5.2]{AJ}. Moreover, it is easy to see that $K_0 \in \mathcal{S}_{\omega}(\mathbb{R}^{2d})$. Indeed, we have
\[ K_0(x,y) = \int e^{i(x-y)\cdot \xi} (1-\varphi_1)((1-\tau)x+\tau y,\xi) (a(x,y,\xi) - a(x,x,\xi)) d\xi. \]
Since $1-\varphi_1 \in \mathcal{S}_{\omega}(\mathbb{R}^{2d})$, following~\cite[Lemma 3.5(a)]{AJ} one obtains the desired property for $K_0$ by Lemma~\ref{NotaInequality1}.

\emph{\underline{First step.}} First of all, we 
compute $D^{\theta}_x D^{\epsilon}_y I_j(x,y)$ for $\theta,\epsilon \in \mathbb{N}_0^d$.
We use integration by parts with the formula
\begin{equation}\label{EqIntegrationByParts1}
e^{i(x-y)\cdot \xi} = \frac1{G(y-x)} G(-D_{\xi}) e^{i(x-y)\cdot \xi},
\end{equation}
for a suitable power $G^s(D)$ of $G(D)$, being $G(\xi)$ the entire function considered in Proposition~\ref{PropPropertiesUltradiffOperator}. 
We obtain, as in~\cite[Theorem 5.4]{AJ},
\begin{align*}
& D^{\theta}_x D^{\epsilon}_y I_j(x,y) \\
& \qquad = \sum_{|\beta+\gamma|=j} \sum_{0 \neq \alpha \leq \beta+\gamma} \frac{(\beta+\gamma)!}{\beta! \gamma!} \frac1{\alpha! (\beta+\gamma-\alpha)!} \frac1{G^s(y-x)} \sum_{\eta \in \mathbb{N}_0^d} b_{\eta} \sum_{\substack{\eta_1+\eta_2+\eta_3 = \eta \\ \theta_1+\theta_2+\theta_3 = \theta \\ \epsilon_1+\epsilon_2+\epsilon_3 = \epsilon}} (-1)^{|\gamma+\epsilon_1|} \times \\
& \quad \qquad \times \frac{\eta!}{\eta_1! \eta_2! \eta_3!} \frac{\theta!}{\theta_1! \theta_2! \theta_3!} \frac{\epsilon!}{\epsilon_1! \epsilon_2! \epsilon_3!} \frac{(\theta_1+\epsilon_1)!}{(\theta_1+\epsilon_1-\eta_1)!} \tau^{|\beta+\epsilon_2|} (1-\tau)^{|\gamma+\theta_2|} \times \\
& \quad \qquad \times \int e^{i(x-y)\cdot \xi} \xi^{\theta_1+\epsilon_1-\eta_1} D^{\theta_2}_x D^{\epsilon_2}_y D^{\alpha+\eta_2}_{\xi} \varphi_j(v,\xi) D^{\theta_3}_x D^{\epsilon_3}_y \big( \partial^{\beta}_x \partial^{\gamma}_y D^{\beta+\gamma-\alpha+\eta_3}_{\xi} a\big)(v,v,\xi) d\xi.
\end{align*}

Fix $\lambda>0$ and set $n\geq \lambda$ large enough that may depend on $\tau,m,\rho,L$, and $R$.
According to Lemma~\ref{NotaInequality1}, it is enough to take $s \in \mathbb{N}$ such that $sC_2 \geq \lambda L^{k+2}$, where $C_2>0$ comes from~\eqref{EqEst7} and $k \in \mathbb{N}_0$ as in~\eqref{EqDefinitionk}. 
For the convergence of the series depending on $\eta \in \mathbb{N}_0^d$, 
let $n$ satisfy in addition that $n \geq sC_1$, where $C_1>0$ comes from~\eqref{EqEst7}. Now, proceeding as in \cite[Theorem 3.13]{FGJ2005pseudo} (and using Proposition~\ref{PropTechnical} and Lemma~\ref{LemmaPrevioLarguisimo}) we can show that $\sum_{j=1}^{\infty} I_j \in \mathcal{S}_{\omega}(\mathbb{R}^{2d})$.


\emph{\underline{Second step.}} 
Since $1-\chi$ is supported in $\Delta_{2r}$, we estimate $|D^{\theta}_x D^{\epsilon}_y Q_j(x,y)|$ for $\theta, \epsilon \in \mathbb{N}_0^d$, $(x,y) \in \Delta_{2r}$. 
By the formula of integration by parts given in~\eqref{EqIntegrationByParts1} for a suitable power 
of $G(D)$, $G^s(D)$, we have
\begin{align*}
& D^{\theta}_x D^{\epsilon}_y Q_j(x,y) \\
& \qquad = \sum_{|\beta+\gamma|=j+1} \sum_{\alpha \leq \beta+\gamma} \frac{(\beta+\gamma)!}{\beta! \gamma!} \frac1{\alpha! (\beta+\gamma-\alpha)!} \frac1{G^s(y-x)} \sum_{\eta \in \mathbb{N}_0^d} b_{\eta} \sum_{\substack{\theta_1+\theta_2+\theta_3=\theta \\ \epsilon_1+\epsilon_2+\epsilon_3=\epsilon \\ \eta_1+\eta_2+\eta_3=\eta}} (-1)^{|\epsilon_1+\gamma|} \times \\
& \qquad \quad \times \frac{\theta!}{\theta_1! \theta_2! \theta_3!} \frac{\epsilon!}{\epsilon_1! \epsilon_2! \epsilon_3!} \frac{\eta!}{\eta_1! \eta_2! \eta_3!} \frac{(\theta_1+\epsilon_1)!}{(\theta_1+\epsilon_1-\eta_1)!} \tau^{|\beta|} (1-\tau)^{|\gamma|} (1-\tau)^{|\theta_2|} \tau^{|\epsilon_2|} \times \\
& \qquad \quad \times \int e^{i(x-y)\cdot \xi} \xi^{\theta_1+\epsilon_1-\eta_1} D^{\theta_2}_x D^{\epsilon_2}_y D^{\alpha+\eta_2}_{\xi} (\varphi_j - \varphi_{j+1})(v,\xi) D^{\theta_3}_x D^{\epsilon_3}_y (D^{\beta+\gamma-\alpha+\eta_3}_{\xi} \omega_{\beta \gamma}) d\xi,
\end{align*}
where $\omega_{\beta \gamma} = \omega_{\beta \gamma}(x,y,\xi)$ is defined in~\eqref{EqDefOmegaBetaGamma}.
Fix $\lambda>0$ and take $n \geq \lambda$ to be determined later. We consider in this step $\widetilde{p} \in \mathbb{N}$ such that
\[ \max\{ 2(1+|\tau|), (1+2r)^{\rho} \} \leq e^{\rho\widetilde{p}}. \]
We put $\widetilde{n} \in \mathbb{N}_0$, $\widetilde{n} \geq n$, such that (where $q \in \mathbb{N}_0$ satisfies $2^q \geq 3R$)
\[ \widetilde{n} \geq \frac{L^{q+1}}{\rho}(\lambda L^{\widetilde{p}+2}+mL^3+1)+1. \]
By Lemma~\ref{NotaInequality2} and the properties of $\varphi^{\ast}$, proceeding as in the proof of the second step of \cite[Theorem 5.4]{AJ} we obtain, for some $C_{\widetilde{n}}>0$,
\begin{align*}
& |D^{\theta_3}_x D^{\epsilon_3}_y (D^{\beta+\gamma-\alpha+\eta_3}_{\xi} \omega_{\beta \gamma})(x,y,\xi)| \\
& \quad \leq C_{\widetilde{n}} e^{16\widetilde{n} L^{\widetilde{p}+3} \rho \sum_{p=1}^{3\widetilde{p}+1} L^p} e^{mL^{k+3}+\cdots+mL} (j+1) \langle(v,\xi)\rangle^{-\rho|2\beta+2\gamma-\alpha|} \times \\
& \quad \quad \times e^{16\widetilde{n} L^{\widetilde{p}+3} \rho \varphi^{\ast}\big(\frac{|2\beta+2\gamma-\alpha+\theta_3+\epsilon_3+\eta_3|}{16\widetilde{n}L^{\widetilde{p}+3}}\big)} e^{mL^3\omega(v)} e^{mL^{k+3}\omega(w)} e^{mL\omega(\xi)} \int_0^1 |1-t|^j dt.
\end{align*}
For the estimate of the derivatives of $Q_j(x,\xi)$ we 
can proceed similarly as in the first step to show finally that $(1-\chi)\sum_{j=1}^{\infty} Q_j \in \mathcal{S}_{\omega}(\mathbb{R}^{2d})$. 

\emph{\underline{Third step.}}
Let $T_N: \mathcal{S}_{\omega}(\mathbb{R}^d) \to \mathcal{S}_{\omega}(\mathbb{R}^d)$ be the operator with kernel $(1-\chi)W_N$. As in the proof of~\cite[Theorem 5.4]{AJ}, 
it follows that $(T_N)$ converges to an operator $T:\mathcal{S}_{\omega}(\mathbb{R}^d) \to \mathcal{S}_{\omega}(\mathbb{R}^d)$ in $L(\mathcal{S}_{\omega}(\mathbb{R}^d), \mathcal{S}'_{\omega}(\mathbb{R}^d))$.
We show that
$T=0$. 
To this aim, fix $N \in \mathbb{N}$, $j_n \leq N+1 < j_{n+1}$ 
and set $a_N := Re^{\frac{n}{N+1}\varphi^{\ast}\big(\frac{N+1}{n}\big)}$. For the support of the derivatives of $\varphi_{N+1}$, we may assume that
\[ 2a_N \leq \langle((1-\tau)x+\tau y,\xi)\rangle \leq 3a_N. \]
For $f,g \in \mathcal{S}_{\omega}(\mathbb{R}^d)$, we have
\[ \langle T_Nf, g \rangle = \int T_Nf(x) g(x) dx = \int \Big( \int (1-\chi)(x,y) W_N(x,y) f(y) dy \Big) g(x) dx. \]
Fixed $N \in \mathbb{N}$, we can use Fubini's theorem (since $f,g \in \mathcal{S}_{\omega}(\mathbb{R}^d)$ and $|\xi| \leq 3a_N$) and we obtain
\begin{align*}
\langle T_Nf, g \rangle &= \int \Big( \int \sum_{|\beta+\gamma|=1}^N \sum_{0 \neq \alpha \leq \beta+\gamma} \frac{(\beta+\gamma)!}{\beta! \gamma!} \frac1{\alpha! (\beta+\gamma-\alpha)!} \Big\{ \int e^{i(x-y)\cdot \xi} \tau^{|\beta|} (1-\tau)^{|\gamma|} (-1)^{|\gamma|} \times \\
& \quad \times D^{\alpha}_{\xi} \varphi_{N+1}(v,\xi) \big( \partial^{\beta}_x \partial^{\gamma}_y D^{\beta+\gamma-\alpha}_{\xi} a\big)(v,v,\xi) d\xi \Big\} f(y) (1-\chi)(x,y) dy \Big) g(x) dx.
\end{align*}

An integration by parts with~\eqref{EqIntegrationByParts1} for a suitable power $s \in \mathbb{N}$, to be determined, gives
\begin{align*}
& e^{i(x-y)\cdot \xi} \frac1{G^s(\xi)} G^s(D_y) \big\{ D^{\alpha}_{\xi} \varphi_{N+1}(v,\xi) \big( \partial^{\beta}_x \partial^{\gamma}_y D^{\beta+\gamma-\alpha}_{\xi} a \big)(v,v,\xi) f(y) (1-\chi)(x,y) g(x) \big\} \\
& \quad = e^{i(x-y)\cdot \xi} \frac1{G^s(\xi)} \sum_{\eta \in \mathbb{N}_0^d} b_{\eta} \sum_{\eta_1+\eta_2+\eta_3+\eta_4=\eta} \frac{\eta!}{\eta_1! \eta_2! \eta_3! \eta_4!} \tau^{|\eta_1|} D^{\eta_1}_y D^{\alpha}_{\xi} \varphi_{N+1}(v,\xi) \times \\
& \quad \quad \times D^{\eta_2}_y \big( \partial^{\beta}_x \partial^{\gamma}_y D^{\beta+\gamma-\alpha}_{\xi} a\big)(v,v,\xi) D^{\eta_3}_y f(y) D^{\eta_4}_y (1-\chi)(x,y) g(x).
\end{align*}
Thus, we obtain
\begin{align*}
\langle T_Nf, g \rangle &= \sum_{|\beta+\gamma|=1}^N \sum_{0 \neq \alpha \leq \beta+\gamma} \frac{(\beta+\gamma)!}{\beta! \gamma!} \frac1{\alpha! (\beta+\gamma-\alpha)!} \sum_{\eta \in \mathbb{N}_0^d} b_{\eta} \sum_{\eta_1+\eta_2+\eta_3+\eta_4=\eta} \frac{\eta!}{\eta_1! \eta_2! \eta_3! \eta_4!} \times \\
& \quad \times \tau^{|\eta_1+\beta|} (1-\tau)^{|\gamma|} (-1)^{|\gamma|} \int \! \int e^{i(x-y)\cdot \xi} \frac1{G^s(\xi)} \int D^{\eta_1}_y D^{\alpha}_{\xi} \varphi_{N+1}(v,\xi) \times \\
& \quad \times D^{\eta_2}_y \big( \partial^{\beta}_x \partial^{\gamma}_y D^{\beta+\gamma-\alpha}_{\xi} a\big)(v,v,\xi) D^{\eta_3}_y f(y) D^{\eta_4}_y (1-\chi)(x,y) g(x) dy  d\xi dx.
\end{align*}
To estimate $|\langle T_Nf, g\rangle|$, let $\widetilde{p} \in \mathbb{N}_0$ be as at the beginning of the proof ($\max\{ |1-\tau|, 2|\tau| \} \leq e^{\widetilde{p}}$). By Definition~\ref{ga} and Corollary~\ref{CorSymbolAmplitudeTau}, for all $n \in \mathbb{N}$ there exist $C_n=C_{4nL^{\widetilde{p}+3}}>0$ and $D_n=D_{4nL^{\widetilde{p}+3}}>0$ such that, by the chain rule,
\begin{align*}
& |D^{\eta_2}_y (\partial^{\beta}_x \partial^{\gamma}_y D^{\beta+\gamma-\alpha}_{\xi} a)(v,v,\xi)| \\
& \qquad \leq 2^{|\eta_2|} |\tau|^{|\eta_2|} C_n \langle(v,\xi)\rangle^{-\rho|2\beta+2\gamma+\eta_2-\alpha|} e^{4n L^{\widetilde{p}+3}\rho \varphi^{\ast}\big(\frac{|2\beta+2\gamma+\eta_2-\alpha|}{4nL^{\widetilde{p}+3}}\big)} e^{m\omega(v,v,\xi)}, \\
& |D^{\eta_1}_y D^{\alpha}_{\xi} \varphi_{N+1}(v,\xi)| \leq D_n \langle(v,\xi)\rangle^{-\rho|\eta_1+\alpha|} e^{4n L^{\widetilde{p}+3} \rho \varphi^{\ast}\big(\frac{|\eta_1+\alpha|}{4nL^{\widetilde{p}+3}}\big)}.
\end{align*}
By the choice of $\widetilde{p} \in \mathbb{N}_0$,
\[ |\tau|^{|\eta_1+\beta|} (2|\tau|)^{|\eta_2|} |1-\tau|^{|\gamma|} \leq e^{\widetilde{p}|\eta_1+\eta_2+\beta+\gamma|}. \]
Since $2a_N \leq \langle(v,\xi)\rangle$ and $1\leq|\beta+\gamma|\leq N<N+1$, we use that $\varphi^{\ast}(x)/x$ is increasing to get
\begin{align*}
\langle(v,\xi)\rangle^{-\rho|\eta_1+\alpha|} \langle(v,\xi)\rangle^{-\rho|2\beta+2\gamma+\eta_2-\alpha|} &\leq \langle(v,\xi)\rangle^{-\rho|2\beta+2\gamma|} \\
&\leq (2R)^{-2\rho|\beta+\gamma|} e^{-2n\rho \varphi^{\ast}\big(\frac{|\beta+\gamma|}{n}\big)}.
\end{align*}

Put $\ell < n$. Since $f,g \in \mathcal{S}_{\omega}(\mathbb{R}^d)$ and $1-\chi \in \mathcal{E}_{\omega}(\mathbb{R}^{2d})$, there exist $E_{\ell}, E'_{\ell}, E>0$ such that (where $k$ is as in~\eqref{EqDefinitionk})
\begin{align*}
|D^{\eta_3}_y f(y)| &\leq E_{\ell} e^{\ell L^3\varphi^{\ast}\big(\frac{|\eta_3|}{\ell L^3}\big)} e^{-((mL+L)L^{k+1}+1)\omega(y)}; \\
|D^{\eta_4}_y (1-\chi)(x,y)| &\leq E'_{\ell} e^{\ell L^3\varphi^{\ast}\big(\frac{|\eta_4|}{\ell L^3}\big)}; \\
|g(x)| &\leq E' e^{-((mL+L)L^{k+1}+1)\omega(x)}.
\end{align*}
We use~\eqref{EqEst7}. Since $\frac{(\beta+\gamma)!}{\beta!\gamma!} \leq 2^{|\beta+\gamma|} \leq e^{|\beta+\gamma|}$, we have by the properties of $\varphi^{\ast}$ that $|\langle T_Nf, g \rangle|$ is less than or equal to
\begin{align*}
& \sum_{|\beta+\gamma|=1}^N \sum_{0 \neq \alpha \leq \beta+\gamma} \Big(\frac{e^{\widetilde{p}+1}}{(2R)^{2\rho}}\Big)^{|\beta+\gamma|} \frac1{\alpha! (\beta+\gamma-\alpha)!} e^{sC_1}  \Big( \sum_{\eta \in \mathbb{N}_0^d} e^{\ell L \varphi^{\ast}\big(\frac{|\eta|}{\ell L}\big)} e^{-sC_1 \varphi^{\ast}\big(\frac{|\eta|}{sC_1}\big)}\Big) e^{\ell L \sum_{t=1}^{\widetilde{p}+2} L^t} \times \\
& \quad \times \int \Big( \int C_3^s e^{-sC_2\omega(\xi)} \Big( \int C_n D_n E_{\ell} E'_{\ell} E' e^{m\omega(v,v,\xi)} e^{-((mL+L)L^{k+1}+1)(\omega(y)+\omega(x))} dy \Big) d\xi \Big) dx.
\end{align*}
Set $s \in \mathbb{N}$ such that $sC_2 \geq (mL+L)L^{k+1}+1$, and take $\ell \geq sC_1$ to get that the series is convergent. It is easy to see for such $s \in \mathbb{N}$ that there exists $C_k>0$ such that
\begin{align*}
& e^{m\omega(v,v,\xi)} e^{-((mL+L)L^{k+1}+1)\omega(y)} e^{-((mL+L)L^{k+1}+1)\omega(x)} e^{-sC_2\omega(\xi)} \\
& \quad \leq C_k e^{-\omega(\langle(v,\xi)\rangle)} e^{-\omega(x)} e^{-\omega(y)} e^{-\omega(\xi)}.
\end{align*}
So, we have
\begin{align*}
& \iiint_{2a_N \leq \langle(v,\xi)\rangle \leq 3a_N} e^{-\omega(\langle(v,\xi)\rangle)} e^{-\omega(x)-\omega(y)-\omega(\xi)} dy d\xi dx \\
& \quad \leq e^{-\omega(2a_N)} \iiint_{\mathbb{R}^{3d}} e^{-\omega(x)-\omega(y)-\omega(\xi)} dy d\xi dx,
\end{align*}
By property $(\gamma)$ of Definition~\ref{DefinitionWeightFunction}, there exists $C>0$ such that $3\log(t) \leq \omega(t)+C$, $t\geq0$. Thus,
\[ e^{-\omega(2a_N)} \leq (2a_N)^{-3} e^C. \]
We recall that $C_nD_n$ is the only constant that depends on $n$. By the choice of the sequence $(j_n)_n$, we have
\[ e^n C_n D_n \leq a_N^3. \]
Hence, there exists $C'>0$ such that 
\begin{align*}
|\langle T_Nf, g \rangle| &\leq C' \sum_{|\beta+\gamma|=1}^N \sum_{0\neq \alpha \leq \beta+\gamma} \Big( \frac{e^{\widetilde{p}+1}}{(2R)^{2\rho}} \Big)^{|\beta+\gamma|} \frac1{\alpha! (\beta+\gamma-\alpha)!} \frac{C_nD_n}{a_N^3} \\
&\leq \frac{C'}{e^n} \sum_{l=1}^N \frac1{l!} \Big( \frac{de^{\widetilde{p}+1}}{(2R)^{2\rho}} \Big)^l.
\end{align*}
Since the series converges for $R\geq1$ large enough (which may depend on $\tau$), and since $n\to \infty$ when $N\to\infty$, we show that $|\langle T_Nf,g\rangle|$ tends to zero when $N\to \infty$.

It only remains to prove the uniqueness of the pseudodifferential operator modulo an $\omega$-regularizing operator. We notice that every global amplitude as in Definition~\ref{ga} defines an $\omega$-ultradistribution. Then, as in \cite{NR2010global,Shu2001pseudo}, 
the identities in $\Sch'_\omega(\R^{2d})$ for the Fourier transform
\begin{equation*}
K_{\tau}(x,y) = (2\pi)^d \mathcal{F}^{-1}_{\xi \mapsto x-y} \big( a_{\tau}((1-\tau)x+\tau y, \xi) \big)
\end{equation*}
and
\begin{equation*}
a_{\tau}(v,\xi) = (2\pi)^{-d} \mathcal{F}_{w \mapsto \xi} \big( K_{\tau}(v+\tau w, v-(1-\tau)w) \big)
\end{equation*}
yield the uniqueness of the $\tau$-symbol since  the kernel $K_{\tau}$ is also unique.
\end{proof}

As a consequence of Theorem~\ref{TheoLarguisimoTau}, we can describe the precise relation between different quantizations for a given global symbol in terms of equivalence of formal sums as the following result shows.
\begin{theo}\label{TheoRelationDifferentSymbols}
If $a_{\tau_1}(x,\xi)$ and $a_{\tau_2}(x,\xi)$ are the $\tau_1$ and $\tau_2$-symbol of the same pseudodifferential operator $A$, then
\[ a_{\tau_2}(x,\xi) \sim \sum_{j=0}^{\infty} \sum_{|\alpha|=j} \frac1{\alpha!} (\tau_1-\tau_2)^{|\alpha|} \partial^{\alpha}_{\xi} D^{\alpha}_x a_{\tau_1}(x,\xi).\]
\end{theo}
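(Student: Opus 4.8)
The plan is to reduce the statement to Theorem~\ref{TheoLarguisimoTau} by performing the change of quantization at the level of amplitudes, and then to carry out an explicit chain-rule computation finished off by a coordinatewise multinomial identity.

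First I would set $b:=a_{\tau_1}$ and introduce the amplitude $a(x,y,\xi):=b((1-\tau_1)x+\tau_1 y,\xi)$, which is a global amplitude (of the order prescribed by Lemma~\ref{LemmaSymbolAmplitudeTau}). Since $b$ is the $\tau_1$-symbol of $A$, the uniqueness part of Theorem~\ref{TheoLarguisimoTau} (applied with $\tau=\tau_1$ to any amplitude representing $A$) shows that the operator $A_0$ with amplitude $a$ coincides with the $\tau_1$-quantization $P_{\tau_1}$ of $b$, hence $A-A_0$ is $\omega$-regularizing. Now apply Theorem~\ref{TheoLarguisimoTau} a second time, to the amplitude $a$ and with parameter $\tau_2$: this yields a symbol $p_{\tau_2}$ (the $\tau_2$-symbol of $A_0$) with
\[ p_{\tau_2}(x,\xi)\sim\sum_{j=0}^{\infty}p_j(x,\xi),\qquad p_j(x,\xi)=\sum_{|\beta+\gamma|=j}\frac{\tau_2^{|\beta|}(1-\tau_2)^{|\gamma|}}{\beta!\,\gamma!}\,\partial^{\beta+\gamma}_{\xi}(-D_x)^{\beta}D^{\gamma}_{y}\,a(x,y,\xi)\big|_{y=x}. \]

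The heart of the proof is the explicit evaluation of $p_j$. Writing $v=(1-\tau_1)x+\tau_1 y$, the chain rule gives $\partial_{x_\ell}[b(v,\xi)]=(1-\tau_1)(\partial_{v_\ell}b)(v,\xi)$ and $\partial_{y_\ell}[b(v,\xi)]=\tau_1(\partial_{v_\ell}b)(v,\xi)$, so that, using $(-D_x)^{\beta}D^{\gamma}_x=(-1)^{|\beta|}D^{\beta+\gamma}_x$ and that $v=x$ on the diagonal,
\[ \partial^{\beta+\gamma}_{\xi}(-D_x)^{\beta}D^{\gamma}_{y}\,a(x,y,\xi)\big|_{y=x}=\big(-(1-\tau_1)\big)^{|\beta|}\,\tau_1^{|\gamma|}\,\partial^{\beta+\gamma}_{\xi}D^{\beta+\gamma}_{x}b(x,\xi). \]
Substituting and grouping the sum by $\alpha:=\beta+\gamma$, the multinomial theorem applied in each coordinate yields
\[ \sum_{\beta+\gamma=\alpha}\frac{\big(-\tau_2(1-\tau_1)\big)^{|\beta|}\big((1-\tau_2)\tau_1\big)^{|\gamma|}}{\beta!\,\gamma!}=\frac{\big(-\tau_2(1-\tau_1)+(1-\tau_2)\tau_1\big)^{|\alpha|}}{\alpha!}=\frac{(\tau_1-\tau_2)^{|\alpha|}}{\alpha!}, \]
because $-\tau_2(1-\tau_1)+(1-\tau_2)\tau_1=\tau_1-\tau_2$. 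Hence $p_j(x,\xi)=\sum_{|\alpha|=j}\frac1{\alpha!}(\tau_1-\tau_2)^{|\alpha|}\partial^{\alpha}_{\xi}D^{\alpha}_x a_{\tau_1}(x,\xi)$, exactly the $j$-th term of the formal sum in the statement; that $\sum_j p_j$ is an admissible formal sum follows from Example~\ref{ExampleFormalSumTau} applied to $a$ (or directly from Definition~\ref{gs}, the term of order $j$ in fact gaining an extra factor $\langle(x,\xi)\rangle^{-2\rho j}$).

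Finally I would conclude by uniqueness: from $A=A_0+R_0$ with $R_0$ $\omega$-regularizing and $A_0=P_{\tau_2}+R_0'$ (the $\tau_2$-decomposition of $A_0$, with $P_{\tau_2}$ carrying the $\tau_2$-symbol $p_{\tau_2}$), we get $A=P_{\tau_2}+(R_0+R_0')$ with $R_0+R_0'$ $\omega$-regularizing, so the uniqueness part of Theorem~\ref{TheoLarguisimoTau} identifies $p_{\tau_2}$, up to equivalence in $\FGS^{\cdot,\omega}_{\rho}$, with any $\tau_2$-symbol $a_{\tau_2}$ of $A$ (the $\tau_2$-symbol of an $\omega$-regularizing operator lies in $\Sch_\omega(\R^{2d})$, hence is $\sim0$). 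Therefore $a_{\tau_2}\sim p_{\tau_2}\sim\sum_j p_j$, which is the claim. I expect the only genuinely delicate point to be the careful bookkeeping of the chain rule together with the sign $(-1)^{|\beta|}$ and the collapse of the double sum under the coordinatewise multinomial identity; everything else is a direct invocation of the machinery already established.
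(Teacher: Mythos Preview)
Your proof is correct and follows essentially the same route as the paper: view $a_{\tau_1}((1-\tau_1)x+\tau_1 y,\xi)$ as an amplitude, apply Theorem~\ref{TheoLarguisimoTau} with parameter $\tau_2$, evaluate the resulting $p_j$ by the chain rule, and collapse the double sum over $\beta+\gamma=\alpha$ via the coordinatewise binomial identity to $(\tau_1-\tau_2)^{|\alpha|}/\alpha!$. You are a bit more explicit than the paper about the uniqueness step and the role of the $\omega$-regularizing remainder, but the structure and all key computations coincide.
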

\begin{proof}
By Theorem~\ref{TheoLarguisimoTau}, the pseudodifferential operator $A$ is determined via the $\tau_1$-symbol $a_{\tau_1}((1-\tau_1)x+\tau_1 y,\xi)$ modulo an $\omega$-regularizing operator. 
Again by Theorem~\ref{TheoLarguisimoTau}, its $\tau_2$-symbol has the following asymptotic expansion
\begin{align*}
a_{\tau_2}(x,\xi) &\sim \sum_{j=0}^{\infty} \sum_{|\beta+\gamma|=j} \frac{(-1)^{|\beta|}}{\beta! \gamma!} \tau_2^{|\beta|} (1-\tau_2)^{|\gamma|} \partial^{\beta+\gamma}_{\xi} D^{\beta}_x D^{\gamma}_y (a_{\tau_1}((1-\tau_1)x+\tau_1 y,\xi)\big|_{y=x}) \\
&= \sum_{j=0}^{\infty} \sum_{|\alpha|=j} \Big( \sum_{\beta+\gamma=\alpha} \frac1{\beta! \gamma!} ((1-\tau_2)\tau_1)^{|\gamma|} (-\tau_2(1-\tau_1))^{|\beta|}\Big) \partial^{\alpha}_{\xi} D^{\alpha}_x a_{\tau_1}(x,\xi) \\
&= \sum_{j=0}^{\infty} \sum_{|\alpha|=j} \frac1{\alpha!} ((1-\tau_2)\tau_1 - \tau_2(1-\tau_1))^{|\alpha|} \partial^{\alpha}_{\xi} D^{\alpha}_x a_{\tau_1}(x,\xi) \\
&= \sum_{j=0}^{\infty} \sum_{|\alpha|=j} \frac1{\alpha!} (\tau_1-\tau_2)^{|\alpha|} \partial^{\alpha}_{\xi} D^{\alpha}_x a_{\tau_1}(x,\xi).
\end{align*}
\end{proof}



\section{Transposition and composition of operators}\label{SectionTransposeAndComposition}
By~\cite[Proposition 3.10]{AJ}, we deduce that if $A$ has as amplitude $a((1-\tau)x+\tau y,\xi)$, then its transpose $^tA$ has the amplitude $a((1-\tau)y+\tau x, -\xi)$. Hence, if $a_{\tau}(x,\xi)$ is the $\tau$-symbol of $A$, then $^ta_{1-\tau}(x,\xi)$ is the $(1-\tau)$-symbol of $^tA$ given by
\begin{equation}\label{EqTranspose1}
^ta_{1-\tau}((1-\tau)x + \tau y, \xi) := a_{\tau}((1-\tau)y + \tau x, -\xi).
\end{equation}
In particular 
we have $^ta_{\tau}(x,\xi) = a_{1-\tau}(x,-\xi)$. On the other hand, for $\tau=0$, $^ta_{1}(y,-\xi)$ coincides with $a_0(x,\xi)$. Now, we show the corresponding generalization of~\cite[Proposition 5.5]{AJ}.
\begin{theo}\label{TheoTransposeTau}
Let $A$ be the pseudodifferential operator with $\tau$-symbol $a_{\tau}(x,\xi)$. Then its transpose  restricted to $\mathcal{S}_{\omega}(\mathbb{R}^d)$ can be decomposed as $^tA=Q+R$, where $R$ is an $\omega$-regularizing operator and $Q$ is the pseudodifferential operator associated to the $\tau$-symbol given by
\[ q(x,\xi) \sim \sum_{j=0}^{\infty} \sum_{|\alpha|=j} \frac1{\alpha!} (1-2\tau)^{|\alpha|} \partial^{\alpha}_{\xi} D^{\alpha}_x a_{\tau}(x,-\xi). \]
\end{theo}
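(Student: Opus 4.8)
The strategy is to combine the transpose identity already recorded in~\eqref{EqTranspose1} with the change-of-quantization formula of Theorem~\ref{TheoRelationDifferentSymbols}. From the discussion preceding the statement we know that if $a_\tau(x,\xi)$ is the $\tau$-symbol of $A$, then $^t a_{1-\tau}(x,\xi) = a_\tau(x,-\xi)$ is the $(1-\tau)$-symbol of $^tA$ (this is the special case of~\eqref{EqTranspose1} obtained by substituting the diagonal, together with the fact that $^t a_\tau(x,\xi) = a_{1-\tau}(x,-\xi)$). So $^tA$ coincides, modulo an $\omega$-regularizing operator, with the pseudodifferential operator whose $(1-\tau)$-symbol is $a_\tau(x,-\xi)$; call this $(1-\tau)$-symbol $b_{1-\tau}(x,\xi) := a_\tau(x,-\xi)$. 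What we want is the $\tau$-symbol $q(x,\xi)$ of the \emph{same} operator $^tA$, i.e. we must pass from quantization $1-\tau$ to quantization $\tau$.

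First I would invoke Theorem~\ref{TheoRelationDifferentSymbols} with $\tau_1 = 1-\tau$ and $\tau_2 = \tau$: since $b_{1-\tau}$ and $q$ are the $(1-\tau)$- and $\tau$-symbols of $^tA$, it gives
\[ q(x,\xi) \sim \sum_{j=0}^{\infty} \sum_{|\alpha|=j} \frac1{\alpha!} \big((1-\tau)-\tau\big)^{|\alpha|} \partial^{\alpha}_{\xi} D^{\alpha}_x b_{1-\tau}(x,\xi) = \sum_{j=0}^{\infty} \sum_{|\alpha|=j} \frac1{\alpha!} (1-2\tau)^{|\alpha|} \partial^{\alpha}_{\xi} D^{\alpha}_x a_\tau(x,-\xi), \]
which is exactly the claimed asymptotic expansion. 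The decomposition $^tA = Q + R$ with $R$ $\omega$-regularizing then follows by applying Theorem~\ref{TheoLarguisimoTau} to $^tA$ (whose amplitude $a_\tau((1-\tau)y+\tau x,-\xi)$ lies in the appropriate amplitude class by~\cite[Proposition 3.10]{AJ} and Lemma~\ref{LemmaSymbolAmplitudeTau}), producing a genuine $\tau$-symbol $q \in \GS^{\max\{m,mL\},\omega}_{\rho}$ whose formal expansion is the one above; uniqueness of the $\tau$-symbol modulo $\omega$-regularizing operators, also from Theorem~\ref{TheoLarguisimoTau}, guarantees that $q$ is well defined.

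The only genuinely delicate points, rather than a real obstacle, are bookkeeping ones: checking that the amplitude $a_\tau((1-\tau)y+\tau x,-\xi)$ of $^tA$ is indeed a global amplitude of the order stated (so that Theorem~\ref{TheoLarguisimoTau} applies and yields a $\tau$-symbol in $\GS^{\max\{m,mL\},\omega}_{\rho}$) — this is immediate from Lemma~\ref{LemmaSymbolAmplitudeTau} together with the sign change $\xi \mapsto -\xi$, which leaves all the estimates in Definition~\ref{gs} untouched since $\omega$ is radial — and verifying that composing the transpose identity with the change-of-quantization formula does not introduce extra $\omega$-regularizing terms beyond those already absorbed into $R$. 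Since "equivalence of formal sums" and "equality modulo $\omega$-regularizing operator" are the two sides of the correspondence set up in Theorem~\ref{TheoLarguisimoTau}, chaining the two results is legitimate, and I expect the proof to be essentially a two-line application of Theorems~\ref{TheoLarguisimoTau} and~\ref{TheoRelationDifferentSymbols} once~\eqref{EqTranspose1} is in place.
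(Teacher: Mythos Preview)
Your proposal is correct and follows essentially the same route as the paper: identify the $(1-\tau)$-symbol of $^tA$ as $a_\tau(x,-\xi)$ via~\eqref{EqTranspose1}, then apply Theorem~\ref{TheoRelationDifferentSymbols} with $\tau_1=1-\tau$, $\tau_2=\tau$ to obtain the asymptotic expansion of the $\tau$-symbol. The paper's proof is exactly this two-line argument, without the additional bookkeeping discussion you included.
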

\begin{proof}
By assumption we deduce that $^tA$ has the $(1-\tau)$-symbol $^ta_{1-\tau}(x,\xi)$ given by formula~\eqref{EqTranspose1} 
restricted to $y=x$. Moreover, from Theorem~\ref{TheoRelationDifferentSymbols}, the $\tau$-symbol of $^tA$ satisfies
\[ ^ta_{\tau}(x,\xi) \sim \sum_{j=0}^{\infty} \sum_{|\alpha|=j} \frac1{\alpha!} (1-2\tau)^{|\alpha|} \partial^{\alpha}_{\xi} D^{\alpha t}_x a_{1-\tau}(x,\xi) = \sum_{j=0}^{\infty} \sum_{|\alpha|=j} \frac1{\alpha!} (1-2\tau)^{|\alpha|} \partial^{\alpha}_{\xi} D^{\alpha}_x a_{\tau}(x,-\xi). \]
\vspace{-0.5mm}
\end{proof}

Let us deal with the composition of two pseudodifferential operators given by their corresponding quantizations of symbols.
\begin{theo}\label{TheoCompositionTau}
Let $a_{\tau_1}(x,\xi) \in \GS^{m_1,\omega}_{\rho}$ be the $\tau_1$-symbol of $A_1$ and $b_{\tau_2}(x,\xi) \in \GS^{m_2,\omega}_{\rho}$ be the $\tau_2$-symbol of $A_2$, being $A_1$ and $A_2$ their corresponding pseudodifferential operators. The $\tau$-symbol $c_{\tau}(x,\xi) \in \GS^{m_1+m_2,\omega}_{\rho}$ of $A_1 \circ A_2$ has the asymptotic expansion
\begin{equation}\label{EqCoefficientsComposition}
\sum_{j=0}^{\infty} \sum_{\substack{|\alpha+\beta-\alpha_1-\alpha_2| = j \\ \alpha+\beta=\gamma+\delta}} c_{\alpha \beta \gamma \delta \alpha_1 \alpha_2} \partial^{\gamma}_{\xi} D^{\alpha}_x a_{\tau_1}(x,\xi) \cdot \partial^{\delta}_{\xi} D^{\beta}_x b_{\tau_2}(x,\xi),
\end{equation}
where the coefficients $c_{\alpha \beta \gamma \delta \alpha_1 \alpha_2}$ 
are
\[ \frac{(2\pi)^d}{\gamma! \delta!} \! \sum_{k,l=0}^{\infty} \sum_{\substack{|\alpha_1|=k \\ |\alpha_2|=l}} \! (-1)^{|\alpha-\alpha_1+\alpha_2|} \binom{\alpha+\beta-\alpha_1-\alpha_2}{\alpha-\alpha_1} \binom{\gamma}{\alpha_1} \binom{\delta}{\alpha_2} \tau^{|\alpha-\alpha_1|} (1-\tau)^{|\beta-\alpha_2|} \tau_1^{|\alpha_1|} (1-\tau_2)^{|\alpha_2|}. \]
\end{theo}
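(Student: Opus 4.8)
The plan is to reduce the statement to a sequence of three already-established results. First I would invoke Theorem~\ref{TheoLarguisimoTau} twice to replace each operator by a fixed quantization: passing to, say, the left quantizations, we may write $A_1 = \OpTau[0](a_0) + R_1$ and $A_2 = \OpTau[0](b_0) + R_2$ with $\omega$-regularizing remainders, where $a_0 \sim \sum_{|\alpha|=j} \frac{1}{\alpha!}(-\tau_1)^{|\alpha|}\partial^\alpha_\xi D^\alpha_x a_{\tau_1}$ (this is Theorem~\ref{TheoRelationDifferentSymbols} with $\tau_2=0$). Composition with an $\omega$-regularizing operator stays $\omega$-regularizing (one checks this directly from Proposition~\ref{PropExampleExtensionPseudodifferentialOperator}: if $R$ has kernel in $\Sch_\omega(\R^{2d})$ and $S$ maps $\Sch_\omega\to\Sch_\omega$ continuously, so does $SR$, and symmetrically for $RS$ using that these operators extend to $\Sch'_\omega$), so $A_1\circ A_2$ equals $\OpTau[0](a_0)\circ \OpTau[0](b_0)$ modulo an $\omega$-regularizing operator. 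Thus it suffices to treat the composition of two \emph{left} quantizations and then convert the resulting left symbol into a $\tau$-symbol via Theorem~\ref{TheoRelationDifferentSymbols} once more.

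The second step is the composition of two left quantizations, which I expect to be essentially the content already available in \cite{AJ}: the kernel of $\OpTau[0](a_0)\circ\OpTau[0](b_0)$ is computed by a formal oscillatory-integral manipulation,
\[ (A_1A_2 u)(x) = \iint e^{i(x-y)\cdot\xi}\, c_0(x,\xi)\, u(y)\, dy\, d\xi, \qquad c_0(x,\xi) \sim \sum_{j=0}^\infty \frac{1}{j!}\sum_{|\alpha|=j} \partial^\alpha_\xi a_0(x,\xi)\, D^\alpha_x b_0(x,\xi), \]
the standard Leibniz/Taylor expansion of the composition formula; that $\sum_j$ lies in $\FGS^{m_1+m_2,\omega}_\rho$ and that the associated symbol lies in $\GS^{m_1+m_2,\omega}_\rho$ follows from the symbolic-calculus machinery (Definition~\ref{DefFormalSums} together with \cite[Theorem 4.6]{AJ}) by the same growth bookkeeping used throughout Section~\ref{SectionSymbolicCalculus}. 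The estimates needed here are of exactly the type appearing in the proof of Lemma~\ref{LemmaSymbolRevisited}: one uses $(\beta)$ of Lemma~\ref{lemmafistrella} to absorb the combinatorial factors $\binom{\gamma}{\alpha_1}\binom{\delta}{\alpha_2}$ and the $\varphi^\ast$-subadditivity to recombine the Young-conjugate exponents, while the $(2R)^{-\rho j}$ decay controls the formal sum.

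The third step is purely algebraic: substitute the asymptotic expansions of $a_0$ in terms of $a_{\tau_1}$ and of $b_0$ in terms of $b_{\tau_2}$ into the left-composition formula, then convert $c_0$ back to the $\tau$-symbol $c_\tau$ using Theorem~\ref{TheoRelationDifferentSymbols} (now reading $a_{\tau_1}\mapsto c_0$ and $\tau_2 \mapsto \tau$, so $c_\tau \sim \sum_{|\alpha|=j}\frac{1}{\alpha!}(-\tau)^{|\alpha|}\partial^\alpha_\xi D^\alpha_x c_0$, up to the appropriate sign/argument conventions in the paper's setup). Expanding all the Leibniz rules, collecting the derivatives hitting $a_{\tau_1}$ into $\partial^\gamma_\xi D^\alpha_x a_{\tau_1}$ and those hitting $b_{\tau_2}$ into $\partial^\delta_\xi D^\beta_x b_{\tau_2}$, and tracking which multi-indices $\alpha_1,\alpha_2$ come from the $\tau_1,\tau_2\to 0$ conversions versus which come from the composition Leibniz expansion, one arrives at precisely the coefficients $c_{\alpha\beta\gamma\delta\alpha_1\alpha_2}$ displayed, with the factorials $\gamma!\,\delta!$, the binomials $\binom{\alpha+\beta-\alpha_1-\alpha_2}{\alpha-\alpha_1}\binom{\gamma}{\alpha_1}\binom{\delta}{\alpha_2}$, the sign $(-1)^{|\alpha-\alpha_1+\alpha_2|}$, and the powers $\tau^{|\alpha-\alpha_1|}(1-\tau)^{|\beta-\alpha_2|}\tau_1^{|\alpha_1|}(1-\tau_2)^{|\alpha_2|}$. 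The main obstacle is bookkeeping rather than analysis: one must be careful that the triple composition of asymptotic expansions (convert $A_1$, convert $A_2$, compose, convert back) is legitimate at the level of $\FGS^{\bullet,\omega}_\rho$ and respects $\sim$, which is guaranteed because equivalence of formal sums is preserved under the operations of Section~\ref{SectionSymbolicCalculus}, and that the reordering of the resulting multiple sums is absolutely convergent in the $\FGS$ sense — this is where the $(2R)^{-\rho j}$ factors and the constraint $\log(\langle(x,\xi)\rangle/R)\geq \frac{n}{j}\varphi^\ast(\frac{j}{n})$ in Definition~\ref{DefFormalSums} do the work, exactly as in the proof of Lemma~\ref{LemmaSymbolRevisited}. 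I would also remark that the constraint $\alpha+\beta=\gamma+\delta$ in the outer sum of \eqref{EqCoefficientsComposition} reflects the fact that in the left-composition formula each $\partial_\xi$ is paired with a $D_x$, a structure preserved under all three conversions.
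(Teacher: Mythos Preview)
Your approach is valid and will produce the stated coefficients after the algebra is carried out, but it differs from the paper's route in one key choice. The paper does \emph{not} reduce both operators to the left quantization. Instead it takes $\tau_1=0$ for $A_1$ and $\tau_2=1$ for $A_2$: then $A_1u(x)=\int e^{ix\cdot\xi}a_0(x,\xi)\widehat{u}(\xi)\,d\xi$ and a short computation shows $\widehat{A_2u}(\xi)=(2\pi)^d\int e^{-iy\cdot\xi}b_1(y,\xi)u(y)\,dy$, so the composition is \emph{exactly} the amplitude operator with amplitude $c(x,y,\xi)=(2\pi)^d a_0(x,\xi)b_1(y,\xi)\in\GA^{m_1+m_2,\omega}_\rho$. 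One then applies Theorem~\ref{TheoLarguisimoTau} directly to this amplitude to obtain the $\tau$-expansion of $c_\tau$, substitutes the expansions of $a_0$ and $b_1$ in terms of $a_{\tau_1}$ and $b_{\tau_2}$ from Theorem~\ref{TheoRelationDifferentSymbols}, and finishes with the change of variables $\gamma'=\alpha_1+\delta$, $\alpha'=\alpha_1+\beta$, $\delta'=\alpha_2+\epsilon$, $\beta'=\alpha_2+\gamma$. Compared with your plan, this saves one layer of asymptotic substitution (you need three conversions---$a_{\tau_1}\to a_0$, $b_{\tau_2}\to b_0$, $c_0\to c_\tau$---plus the left-composition theorem from \cite{AJ}, whereas the paper needs only two conversions and no separate composition theorem), and it makes the membership $c_\tau\in\GS^{m_1+m_2,\omega}_\rho$ immediate from Theorem~\ref{TheoLarguisimoTau}. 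Your route has the merit of being modular and reusing \cite[Theorem~5.7]{AJ} as a black box. One small slip: by Theorem~\ref{TheoRelationDifferentSymbols} with target $0$, the expansion of $a_0$ in terms of $a_{\tau_1}$ carries the factor $\tau_1^{|\alpha|}$, not $(-\tau_1)^{|\alpha|}$.
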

\begin{proof}
We first assume $\tau_1=0$ and $\tau_2=1$. In this case, $a_{\tau_1}((1-\tau_1)x+\tau_1 y,\xi)$ and $b_{\tau_2}((1-\tau_2)x+\tau_2 y,\xi)$ coincide with $ a_0(x,\xi)$ and $ b_1(y,\xi)$. Then
\[ (A_1 \circ A_2)u(x) = \int e^{ix\cdot \xi} a_0(x,\xi) \widehat{A_2u}(\xi) d\xi, \qquad x \in \mathbb{R}^d. \]
It is not difficult to see that $A_2u(x) = \widehat{I}(-x)$, where $I(\xi) = \int e^{-iy\cdot \xi} b_1(y,\xi) u(y) dy$. 
Hence $\widehat{A_2u}(\xi) = (2\pi)^dI(\xi)$ and
\[ (A_1 \circ A_2)u(x) = \iint e^{i(x-y)\cdot \xi} c(x,y,\xi) u(y) dy d\xi, \qquad x \in \mathbb{R}^d, \]
where $c(x,y,\xi) = (2\pi)^d a_0(x,\xi) b_1(y,\xi)$ is an amplitude in $\GA^{m_1+m_2,\omega}_{\rho}$. So, by Theorem~\ref{TheoLarguisimoTau}, the $\tau$-symbol $c_{\tau}(x,\xi)$ has the asymptotic expansion:
\begin{align}\label{EqComp1}
c_{\tau}(x,\xi) &\sim (2\pi)^d \sum_{j=0}^{\infty} \sum_{|\beta+\gamma|=j} \frac{(-1)^{|\beta|}}{\beta! \gamma!} \tau^{|\beta|} (1-\tau)^{|\gamma|} \partial^{\beta+\gamma}_{\xi} D^{\beta}_x D^{\gamma}_y \big( a_0(x,\xi) b_1(y,\xi) \big)\big|_{y=x} \\ \label{EqComp2} 
&= (2\pi)^d \sum_{j=0}^{\infty} \sum_{\substack{|\beta+\gamma|=j \\ \delta+\epsilon=\beta+\gamma}} \frac{(-1)^{|\beta|} (\beta+\gamma)!}{\delta! \epsilon! \beta! \gamma!} \tau^{|\beta|} (1-\tau)^{|\gamma|} \partial^{\delta}_{\xi} D^{\beta}_x a_0(x,\xi) \cdot \partial^{\epsilon}_{\xi} D^{\gamma}_x b_1(x,\xi).
\end{align}

For the general case, by Theorem~\ref{TheoRelationDifferentSymbols}, we have
\begin{align*}
a_0(x,\xi) &\sim \sum_{j_1=0}^{\infty} \sum_{|\alpha_1|=j_1} \frac1{\alpha_1!} \tau_1^{|\alpha_1|} \partial^{\alpha_1}_{\xi} D^{\alpha_1}_x a_{\tau_1}(x,\xi); \\
b_1(x,\xi) &\sim \sum_{j_2=0}^{\infty} \sum_{|\alpha_2|=j_2} \frac{(-1)^{|\alpha_2|}}{\alpha_2!} (1-\tau_2)^{|\alpha_2|} \partial^{\alpha_2}_{\xi} D^{\alpha_2}_x b_{\tau_2}(x,\xi).
\end{align*}
Thus, from~\eqref{EqComp2}, we get
\begin{align*}
c_{\tau}(x,\xi) &\sim (2\pi)^d \sum_{j=0}^{\infty} \sum_{\substack{|\beta+\gamma|=j \\ \delta+\epsilon=\beta+\gamma}} \frac{(-1)^{|\beta|} (\beta+\gamma)!}{\delta! \epsilon! \beta! \gamma!} \tau^{|\beta|} (1-\tau)^{|\gamma|} \times \\
& \quad \times \partial^{\delta}_{\xi} D^{\beta}_x \Big( \sum_{j_1=0}^{\infty} \sum_{|\alpha_1|=j_1} \frac1{\alpha_1!} \tau_1^{|\alpha_1|} \partial^{\alpha_1}_{\xi} D^{\alpha_1}_x a_{\tau_1}(x,\xi) \Big) \times \\
& \quad \times \partial^{\epsilon}_{\xi} D^{\gamma}_x \Big( \sum_{j_2=0}^{\infty} \sum_{|\alpha_2|=j_2} \frac{(-1)^{|\alpha_2|}}{\alpha_2!} (1-\tau_2)^{|\alpha_2|} \partial^{\alpha_2}_{\xi} D^{\alpha_2}_x b_{\tau_2}(x,\xi) \Big).
\end{align*}
We make the change of variables $\gamma' = \alpha_1 + \delta$, $\alpha' = \alpha_1 + \beta$, $\delta' = \alpha_2 + \epsilon$, $\beta' = \alpha_2 + \gamma$.
Then
\begin{align*}
c_{\tau}(x,\xi) &\sim (2\pi)^d \sum_{j=0}^{\infty} \sum_{\substack{|\alpha'+\beta'-\alpha_1-\alpha_2|=j \\ \alpha'+\beta' = \delta'+\gamma'}} \frac1{\gamma'! \delta'!} \partial^{\gamma'}_{\xi} D^{\alpha'}_x a_{\tau_1}(x,\xi) \partial^{\delta'}_{\xi} D^{\beta'}_x b_{\tau_2}(x,\xi) \times \\
& \quad \times \sum_{k,l=0}^{\infty} \sum_{\substack{|\alpha_1|=k \\ |\alpha_2|=l}} (-1)^{|\alpha'-\alpha_1+\alpha_2|} \frac{(\alpha'+\beta'-\alpha_1-\alpha_2)!}{(\alpha'-\alpha_1)! (\beta'-\alpha_2)!} \frac{\gamma'!}{\alpha_1! (\gamma'-\alpha_1)!} \frac{\delta'!}{\alpha_2! (\delta'-\alpha_2)!} \times \\
& \quad \times \tau^{|\alpha'-\alpha_1|} (1-\tau)^{|\beta'-\alpha_2|} \tau_1^{|\alpha_1|} (1-\tau_2)^{|\alpha_2|}.
\end{align*}
The proof follows since
\[ \frac{(\alpha'+\beta'-\alpha_1-\alpha_2)!}{(\alpha'-\alpha_1)! (\beta'-\alpha_2)!} \frac{\gamma'!}{\alpha_1! (\gamma'-\alpha_1)!} \frac{\delta'!}{\alpha_2! (\delta'-\alpha_2)!} = \binom{\alpha'+\beta'-\alpha_1-\alpha_2}{\alpha'-\alpha_1} \binom{\gamma'}{\alpha_1} \binom{\delta'}{\alpha_2}. \]

\end{proof}

The coefficients appearing in formula~\eqref{EqCoefficientsComposition} are sometimes simplified for some particular $\tau \in \mathbb{R}$. For example, if $\tau=0$, by formula~\eqref{EqComp1}, we obtain
\[ c(x,\xi) = c_0(x,\xi) \sim (2\pi)^d \sum_{j=0}^{\infty} \sum_{|\gamma|=j} \frac1{\gamma!} \partial^{\gamma}_{\xi} D^{\gamma}_y \big( a_0(x,\xi) b_1(y,\xi) \big)\big|_{y=x}. \]
On the other hand, from formula~\eqref{EqTranspose1}, $b_1(x,\xi) =$ $^tb_0(x,-\xi)$. Hence, by~\cite[Lemma 5.6]{AJ}, we have
\begin{equation*}
c_0(x,\xi) \sim (2\pi)^d \big( a_0(x,\xi) \circ b_0(x,\xi) \big) = (2\pi)^d \sum_{j=0}^{\infty} \sum_{|\gamma|=j} \frac1{\gamma!} \partial^{\gamma}_{\xi} a_0(x,\xi) D^{\gamma}_x b_0(x,\xi),
\end{equation*}
which in particular gives~\cite[Theorem 5.7]{AJ} (cf.~\cite[Theorem 23.7]{Shu2001pseudo}).

Another interesting case is when dealing with $\tau=1/2$. We will obtain it as a consequence of a more general result (cf.~\cite[Problem 23.2]{Shu2001pseudo}). First, we need a lemma, taken from~\cite[Theorem 5.5]{BBR1996Global}:
\begin{lema}\label{LemmaEqualityTheo5.5BBR}
The formula
\[ \frac{(\beta+\gamma)!}{(\beta+\gamma-\epsilon)! \epsilon!} \frac1{\beta! \gamma!} = \sum_{\substack{0 \leq \delta \leq \beta \\ \beta-\epsilon \leq \delta \leq \beta-\epsilon+\gamma}} \frac1{(\beta-\delta)! (\beta-\epsilon+\gamma-\delta)! \delta! (\delta-\beta+\epsilon)!}, \]
holds for all $\beta,\gamma,\epsilon \in \mathbb{N}_0^d$ with $\epsilon \leq \beta+\gamma$.
\end{lema}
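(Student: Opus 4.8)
The plan is to prove the combinatorial identity of Lemma~\ref{LemmaEqualityTheo5.5BBR} by a direct manipulation of multinomial/binomial coefficients, working componentwise since all quantities factor over the $d$ coordinates; thus it suffices to treat the scalar case $d=1$, i.e.\ $\beta,\gamma,\epsilon\in\mathbb{N}_0$ with $\epsilon\le\beta+\gamma$. The left-hand side is $\displaystyle\binom{\beta+\gamma}{\epsilon}\frac1{\beta!\,\gamma!}=\frac{1}{\epsilon!\,(\beta+\gamma-\epsilon)!\,\beta!\,\gamma!}\cdot\epsilon!\,(\beta+\gamma-\epsilon)!$, but the cleaner route is to multiply both sides by $\beta!\,\gamma!$ and recognize that the claim is equivalent to a Vandermonde-type convolution.

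First I would rewrite the summand on the right. Setting $\delta$ as the summation index with $\max\{0,\beta-\epsilon\}\le\delta\le\min\{\beta,\beta-\epsilon+\gamma\}$, multiply numerator and denominator so that
\[
\frac{1}{(\beta-\delta)!\,(\beta-\epsilon+\gamma-\delta)!\,\delta!\,(\delta-\beta+\epsilon)!}
=\frac{1}{\beta!\,\gamma!}\binom{\beta}{\delta}\binom{\gamma}{\,\beta-\epsilon+\gamma-\delta\,}^{-1}\!\!\cdots
\]
— more precisely, observe that $\binom{\beta}{\delta}=\frac{\beta!}{\delta!\,(\beta-\delta)!}$ and $\binom{\gamma}{\delta-\beta+\epsilon}=\frac{\gamma!}{(\delta-\beta+\epsilon)!\,(\beta-\epsilon+\gamma-\delta)!}$, so the right-hand side equals $\frac{1}{\beta!\,\gamma!}\sum_{\delta}\binom{\beta}{\delta}\binom{\gamma}{\delta-\beta+\epsilon}$. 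Hence, after clearing $\frac1{\beta!\gamma!}$, the identity to prove reduces to
\[
\binom{\beta+\gamma}{\epsilon}=\sum_{\delta}\binom{\beta}{\delta}\binom{\gamma}{\delta-\beta+\epsilon},
\]
with $\delta$ ranging over the stated interval. Substituting $k=\beta-\delta$ turns the right-hand side into $\sum_{k}\binom{\beta}{k}\binom{\gamma}{\epsilon-k}$, which is exactly the Vandermonde--Chu identity for $\binom{\beta+\gamma}{\epsilon}$; the range constraints on $\delta$ translate precisely to $0\le k\le\epsilon$ together with $k\le\beta$ and $\epsilon-k\le\gamma$, i.e.\ the natural support of the Vandermonde sum, so no terms are lost or spuriously added.

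The main steps in order: (i) reduce to $d=1$ by the componentwise factorization of factorials and binomials; (ii) multiply through by $\beta!\,\gamma!$ and rewrite each summand on the right using two binomial coefficients; (iii) perform the index substitution $k=\beta-\delta$ and check that the summation range matches the support of the Vandermonde convolution; (iv) invoke Vandermonde's identity $\sum_k\binom{\beta}{k}\binom{\gamma}{\epsilon-k}=\binom{\beta+\gamma}{\epsilon}$ to conclude. I expect the only delicate point to be bookkeeping the inequalities defining the range of $\delta$ and confirming they are equivalent to $0\le k\le\min\{\beta,\epsilon\}$ and $\epsilon-k\le\gamma$; once that is verified the identity is immediate. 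Alternatively, one could avoid the substitution entirely and argue by generating functions, comparing the coefficient of $x^{\epsilon}$ in $(1+x)^{\beta}(1+x)^{\gamma}=(1+x)^{\beta+\gamma}$ after expanding the product as a Cauchy product and reindexing — this makes the range conditions automatic and is perhaps the shortest write-up.
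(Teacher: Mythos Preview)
Your proposal is correct. The reduction to $d=1$ by componentwise factorization, the rewriting of each summand as $\frac{1}{\beta!\gamma!}\binom{\beta}{\delta}\binom{\gamma}{\delta-\beta+\epsilon}$, and the substitution $k=\beta-\delta$ leading to the Vandermonde convolution $\sum_k\binom{\beta}{k}\binom{\gamma}{\epsilon-k}=\binom{\beta+\gamma}{\epsilon}$ are all sound; the range bookkeeping you flag as ``delicate'' is fine and you have checked it correctly.

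As for comparison: the paper does not actually prove this lemma at all --- it simply records the statement and cites \cite[Theorem~5.5]{BBR1996Global}. It is worth noting, though, that the very next lemma in the paper is precisely Vandermonde's identity (Lemma~\ref{LemmaVandermondeIdentity}), stated for later use in the parametrix construction; so your reduction is entirely in the spirit of the surrounding material, and indeed you could cite that lemma directly rather than re-deriving Vandermonde by generating functions.
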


\begin{exam}
Given two pseudodifferential operators $A$ and $B$, the $\tau$-symbol of the composition operator $C=A \circ B$ is given by
\[ c_{\tau}(x,\xi) \sim (2\pi)^d \sum_{j=0}^{\infty} \sum_{|\beta+\gamma|=j} \frac{(-1)^{|\beta|}}{\beta! \gamma!} \tau^{|\beta|} (1-\tau)^{|\gamma|} (\partial^{\gamma}_{\xi} D^{\beta}_x a_{\tau}(x,\xi)) (\partial^{\beta}_{\xi} D^{\gamma}_x b_{\tau}(x,\xi)). \]
\end{exam}
\begin{proof}
Formula~\eqref{EqComp2} states that $c_{\tau}(x,\xi)$ is equivalent to (since $\delta=\beta+\gamma-\epsilon$)
\[ (2\pi)^d \sum_{j=0}^{\infty} \sum_{|\beta+\gamma|=j} (-1)^{|\beta|} \tau^{|\beta|} (1-\tau)^{|\gamma|} \sum_{\epsilon \leq \beta+\gamma} \frac{(\beta+\gamma)!}{(\beta+\gamma-\epsilon)! \epsilon!} \frac1{\beta! \gamma!} \partial^{\beta+\gamma-\epsilon}_{\xi} D^{\beta}_x a_0(x,\xi) \cdot \partial^{\epsilon}_{\xi} D^{\gamma}_x b_1(x,\xi). \]
Moreover, by Lemma~\ref{LemmaEqualityTheo5.5BBR}, it is equal to
\begin{align*}
& (2\pi)^d \sum_{j=0}^{\infty} \sum_{|\beta+\gamma|=j} (-1)^{|\beta|} \tau^{|\beta|} (1-\tau)^{|\gamma|} \times \\
& \quad \times \sum_{\epsilon \leq \beta+\gamma} \sum_{\substack{0 \leq \delta \leq \beta \\ \beta-\epsilon \leq \delta \leq \beta-\epsilon+\gamma}} \frac1{(\beta-\delta)! (\beta-\epsilon+\gamma-\delta)! \delta! (\delta-\beta+\epsilon)!} \partial^{\beta+\gamma-\epsilon}_{\xi} D^{\beta}_x a_0(x,\xi) \cdot \partial^{\epsilon}_{\xi} D^{\gamma}_x b_1(x,\xi).
\end{align*}
We put $\mu = \beta-\delta$, $\nu = \beta-\epsilon+\gamma-\delta$, and $\theta=\delta-\beta+\epsilon$. Therefore,
\begin{align*}
c_{\tau}(x,\xi) &\sim (2\pi)^d \sum_{j=0}^{\infty} \sum_{|\nu+\theta+\mu+\delta|=j} \frac{(-1)^{|\mu+\delta|}}{\mu! \nu! \delta! \theta!} \tau^{|\mu+\delta|} (1-\tau)^{|\nu+\theta|} \times \\
&\quad \times \partial^{\nu+\delta}_{\xi} D^{\mu+\delta}_x a_0(x,\xi) \cdot  \partial^{\theta+\mu}_{\xi} D^{\nu+\theta}_x b_1(x,\xi),
\end{align*}
and taking $j=j_1+j_2+j_3$, $j_1, j_2, j_3 \in \mathbb{N}_0$, we have
\begin{align*}
c_{\tau}(x,\xi) &\sim (2\pi)^d \sum_{j_1=0}^{\infty} \sum_{|\nu + \mu|=j_1} \! \! \frac{(-1)^{|\mu|}}{\mu! \nu!} \tau^{|\mu|} (1-\tau)^{|\nu|} \partial^{\nu}_{\xi} D^{\mu}_x \Big( \sum_{j_2=0}^{\infty} \sum_{|\delta|=j_2} \! \frac{(-1)^{|\delta|}}{\delta!} \tau^{|\delta|} \partial^{\delta}_{\xi} D^{\delta}_x a_0(x,\xi) \Big) \times \\
& \quad \times \partial^{\mu}_{\xi} D^{\nu}_x \Big( \sum_{j_3=0}^{\infty} \sum_{|\theta|=j_3} \frac1{\theta!} (1-\tau)^{|\theta|} \partial^{\theta}_{\xi} D^{\theta}_x b_1(x,\xi) \Big).
\end{align*}
We get the result since Theorem~\ref{TheoRelationDifferentSymbols} gives
\[ a_{\tau}(x,\xi) \sim \sum_{k=0}^{\infty} \sum_{|\delta|=k} \frac{(-1)^{|\delta|}}{\delta!} \tau^{|\delta|} \partial^{\delta}_{\xi} D^{\delta}_x a_0(x,\xi), \qquad
b_{\tau}(x,\xi) \sim \sum_{k=0}^{\infty} \sum_{|\theta|=k} \frac1{\theta!} (1-\tau)^{|\theta|} \partial^{\theta}_{\xi} D^{\theta}_x b_1(x,\xi). \]
\end{proof}

\begin{cor}
Given two pseudodifferential operators $A$ and $B$, the Weyl symbol of the composition operator $C=A \circ B$ is given by
\[ c_w(x,\xi) 
\sim (2\pi)^d \sum_{j=0}^{\infty} \sum_{|\beta+\gamma|=j} \frac{(-1)^{|\beta|}}{\beta! \gamma!} 2^{-|\beta+\gamma|} (\partial^{\gamma}_{\xi} D^{\beta}_x a_w(x,\xi)) (\partial^{\beta}_{\xi} D^{\gamma}_x b_w(x,\xi)). \]
\end{cor}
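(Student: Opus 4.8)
The plan is to obtain this Corollary as the special case $\tau = 1/2$ of the preceding Example, so essentially no new work is required. First I would recall that, by the terminology fixed right after Theorem~\ref{TheoLarguisimoTau}, the Weyl symbol of a pseudodifferential operator is precisely its $\tau$-symbol for $\tau = 1/2$; hence $a_w = a_{1/2}$, $b_w = b_{1/2}$ and $c_w = c_{1/2}$. Since the Example is stated for every real $\tau$, it applies in particular with $\tau = 1/2$, and it already provides a formal sum in $\FGS^{m_1+m_2,\omega}_{\rho}$ which is equivalent, in the sense of Definition~\ref{DefEqFormalSums}, to the $(1/2)$-symbol of $A \circ B$ (whose membership in $\GS^{m_1+m_2,\omega}_{\rho}$ is guaranteed by Theorem~\ref{TheoCompositionTau}).

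Concretely, I would simply substitute $\tau = 1/2$ in the asymptotic expansion of the Example, obtaining
\[ c_{1/2}(x,\xi) \sim (2\pi)^d \sum_{j=0}^{\infty} \sum_{|\beta+\gamma|=j} \frac{(-1)^{|\beta|}}{\beta! \gamma!} \bigl(\tfrac{1}{2}\bigr)^{|\beta|} \bigl(\tfrac{1}{2}\bigr)^{|\gamma|} \bigl(\partial^{\gamma}_{\xi} D^{\beta}_x a_{1/2}(x,\xi)\bigr) \bigl(\partial^{\beta}_{\xi} D^{\gamma}_x b_{1/2}(x,\xi)\bigr), \]
and then invoke the elementary identity $\bigl(\tfrac{1}{2}\bigr)^{|\beta|}\bigl(\tfrac{1}{2}\bigr)^{|\gamma|} = 2^{-|\beta+\gamma|}$ together with the identifications $a_{1/2} = a_w$, $b_{1/2} = b_w$, $c_{1/2} = c_w$. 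This yields exactly the asserted formula, and both the symbol-class membership and the equivalence of formal sums are inherited verbatim from the Example, since the substitution $\tau = 1/2$ is covered by its hypotheses.

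There is essentially no obstacle in this argument: the entire content has already been established in the Example (which in turn rests on Theorem~\ref{TheoCompositionTau} and Lemma~\ref{LemmaEqualityTheo5.5BBR}), and what remains is only the bookkeeping of the powers of $1/2$ and the dictionary between ``Weyl'' and ``$\tau = 1/2$''. If a sanity check is desired, one can note that the resulting kernel is symmetric under interchanging the roles of $a_w$ and $b_w$ accompanied by $(\beta,\gamma)\mapsto(\gamma,\beta)$ and a sign, which reflects the well-known symmetry of the Weyl (Moyal-type) product and confirms that the formula is the correct Weyl-calculus counterpart of the general $\tau$-expansion.
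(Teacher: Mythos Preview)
Your proposal is correct and matches the paper's approach exactly: the Corollary is stated without proof in the paper precisely because it is the immediate specialization of the preceding Example to $\tau = 1/2$, with the only computation being $(1/2)^{|\beta|}(1/2)^{|\gamma|} = 2^{-|\beta+\gamma|}$.
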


\section{Parametrices and $\omega$-regularity}\label{Parametrix}
In this section we give a sufficient condition for \emph{$\omega$-regularity} of a global pseudodifferential operator. We say that a pseudodifferential operator $P:\Sch'_\omega(\R^d)\to \Sch'_\omega(\R^d)$ is \emph{$\omega$-regular} if given $u\in \Sch'_\omega(\R^d)$ such that $Pu\in \Sch_\omega(\R^d)$, we have $u\in \Sch_\omega(\R^d)$. See \cite{BJORegularity} for a study of $\omega$-regularity of linear partial differential operators with polynomial coefficients using quadratic transformations (cf.~\cite{MO2020Regularity} for the non-isotropic case).

We use the well-known method of the construction of a parametrix for the symbol of the operator, using symbolic calculus. We follow the lines of \cite{FGJ2004hypo,Z1986pseudodifferential}. From \cite{PV1984almost}, we know that a weight function $\sigma$ is equivalent to a subadditive weight function if and only if it satisfies
\begin{itemize}
\item[$(\alpha_0)$]
\vspace{0.5mm} $\displaystyle \exists C>0, \ \exists t_0>0 \ \forall \lambda \geq 1: \ \sigma(\lambda t) \leq \lambda C \sigma(t), \qquad t \geq t_0$.
\end{itemize}
We refer to \cite{FG2006superposition,PV1984almost} for applications and characterizations of property $(\alpha_0)$ on the weight function.
The following result is taken from \cite[Lemma 3.3]{FGJ2004hypo}.
\begin{lema}\label{Lemma323TesisDavid}
Let $\omega$ be a subadditive weight function. For all $\lambda>0$ and $j,k \in \mathbb{N}$, we have
\[ \frac{e^{\lambda \varphi^{\ast}_{\omega}(\frac{j}{\lambda})}}{j!} \frac{e^{\lambda \varphi^{\ast}_{\omega}(\frac{k}{\lambda})}}{k!} \leq \frac{e^{\lambda \varphi^{\ast}_{\omega}(\frac{j+k}{\lambda})}}{(j+k)!}. \]
\end{lema}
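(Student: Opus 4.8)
The plan is to reduce the inequality to an elementary optimization by exploiting the supremum representation of the Young conjugate, with subadditivity of $\omega$ entering at exactly one point. First I would record the identity
\[ e^{\lambda \varphi^{\ast}_{\omega}(p/\lambda)} = \sup_{t \geq 0} t^{p} e^{-\lambda \omega(t)}, \qquad p \in \mathbb{N}_{0}, \]
which follows by writing $\lambda \varphi^{\ast}_{\omega}(p/\lambda) = \sup_{s \geq 0}\{ps - \lambda \varphi_{\omega}(s)\} = \sup_{s\ge0}\{ps - \lambda\omega(e^{s})\}$, substituting $t = e^{s}$ so that $s \geq 0$ corresponds to $t \geq 1$, and then noting that since $\omega|_{[0,1]} \equiv 0$ and $t^{p} \leq 1$ on $[0,1]$, the supremum over $t \geq 1$ coincides with the supremum over $t \geq 0$; the supremum is finite (and attained) by condition $(\gamma)$, which forces $t^{p}e^{-\lambda\omega(t)} \to 0$ as $t \to \infty$.

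Next, writing $N_{p} := e^{\lambda \varphi^{\ast}_{\omega}(p/\lambda)}$, I would expand the product as a joint supremum,
\[ N_{j} N_{k} = \sup_{s,t \geq 0} s^{j} t^{k} e^{-\lambda(\omega(s) + \omega(t))}, \]
and use the subadditivity of $\omega$ --- the single place where that hypothesis is needed --- to estimate $e^{-\lambda(\omega(s)+\omega(t))} \leq e^{-\lambda \omega(s+t)}$. For fixed $r = s + t \geq 0$, elementary calculus shows that $s^{j} t^{k}$ is maximized over $s\in[0,r]$, $t=r-s$, at $s = \tfrac{j}{j+k}\, r$, $t = \tfrac{k}{j+k}\, r$, with maximal value $\tfrac{j^{j} k^{k}}{(j+k)^{j+k}}\, r^{j+k}$. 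Substituting $r$ for $s+t$ in the supremum gives
\[ N_{j} N_{k} \leq \frac{j^{j} k^{k}}{(j+k)^{j+k}} \sup_{r \geq 0} r^{j+k} e^{-\lambda \omega(r)} = \frac{j^{j} k^{k}}{(j+k)^{j+k}}\, N_{j+k}. \]

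Finally, I would observe that the asserted inequality $\tfrac{N_{j}}{j!}\tfrac{N_{k}}{k!} \leq \tfrac{N_{j+k}}{(j+k)!}$ is equivalent to $\binom{j+k}{j} N_{j} N_{k} \leq N_{j+k}$, and that the binomial expansion $(j+k)^{j+k} = \sum_{i=0}^{j+k}\binom{j+k}{i} j^{i} k^{j+k-i} \geq \binom{j+k}{j} j^{j} k^{k}$ yields $\binom{j+k}{j}\tfrac{j^{j}k^{k}}{(j+k)^{j+k}} \leq 1$. Combining this with the previous display gives $\binom{j+k}{j} N_{j} N_{k} \leq \binom{j+k}{j}\tfrac{j^{j}k^{k}}{(j+k)^{j+k}}N_{j+k} \leq N_{j+k}$, which is exactly the claim. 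I do not expect a genuine obstacle: the only delicate points are the extension of the supremum to $t \geq 0$ in the first step and the observation that subadditivity (rather than merely property $(\alpha)$) is what makes the estimate $e^{-\lambda(\omega(s)+\omega(t))}\le e^{-\lambda\omega(s+t)}$ available.
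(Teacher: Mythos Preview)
Your proof is correct: the representation $e^{\lambda\varphi^{\ast}_{\omega}(p/\lambda)}=\sup_{t\ge 0}t^{p}e^{-\lambda\omega(t)}$ is justified exactly as you say, subadditivity enters precisely where you use it, the maximization of $s^{j}t^{k}$ on the simplex $s+t=r$ is standard, and the binomial estimate $\binom{j+k}{j}j^{j}k^{k}\le (j+k)^{j+k}$ closes the argument. The paper does not supply its own proof of this lemma but simply cites \cite[Lemma~3.3]{FGJ2004hypo}, so there is no in-paper argument to compare your approach against.
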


The following lemma states Vandermonde's identity.
\begin{lema}\label{LemmaVandermondeIdentity}
For any $m,n,r \in \mathbb{N}_0$, we have
\[ \sum_{k=0}^r \binom{m}{k} \binom{n}{r-k} = \binom{m+n}{r}. \]
\end{lema}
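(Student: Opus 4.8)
The plan is to prove the identity by comparing coefficients in the polynomial identity $(1+x)^{m+n} = (1+x)^m (1+x)^n$. First I would recall the binomial theorem: for any $N \in \mathbb{N}_0$, $(1+x)^N = \sum_{k=0}^N \binom{N}{k} x^k$, with the standard convention that $\binom{N}{k} = 0$ whenever $k > N$. Applying this with $N = m+n$, the coefficient of $x^r$ on the left-hand side of $(1+x)^{m+n} = (1+x)^m (1+x)^n$ is exactly $\binom{m+n}{r}$.

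On the right-hand side, I would expand each factor as $(1+x)^m = \sum_{i=0}^m \binom{m}{i} x^i$ and $(1+x)^n = \sum_{j=0}^n \binom{n}{j} x^j$ and multiply the two series. Collecting the terms with $i+j=r$, the coefficient of $x^r$ in the product equals $\sum_{k=0}^r \binom{m}{k} \binom{n}{r-k}$, where again the convention on out-of-range binomial coefficients guarantees that the terms with $k>m$ or $r-k>n$ contribute nothing, so the stated range $0 \le k \le r$ is the correct one. Equating the two expressions for the coefficient of $x^r$ yields the claim.

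Alternatively, a purely combinatorial argument works just as well: $\binom{m+n}{r}$ counts the $r$-element subsets of a set $S$ with $|S|=m+n$; writing $S = A \cup B$ as a disjoint union with $|A|=m$ and $|B|=n$, each $r$-element subset is uniquely determined by its intersection with $A$, which has some size $k$ with $0 \le k \le r$, together with its intersection with $B$, which has size $r-k$; there are $\binom{m}{k}$ choices for the first and $\binom{n}{r-k}$ for the second, and summing over $k$ gives the identity. I do not expect any genuine obstacle here; the only point requiring a word of care is the convention for binomial coefficients with upper index smaller than the lower one, which is what makes the summation ranges harmless.
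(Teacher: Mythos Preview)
Your proof is correct; both the generating-function argument and the combinatorial bijection are standard and complete, and your remark about the convention $\binom{N}{k}=0$ for $k>N$ handles the summation range cleanly. The paper does not actually prove this lemma: it simply records Vandermonde's identity as a known fact and uses it later, so there is nothing to compare against.
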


\begin{lema}\label{Lemma324TesisDavid}
If $\sum_j a_j \in \FGS^{m_1,\omega}_{\rho}$ and $b(x,\xi) \in \GS^{m_2,\omega}_{\rho}$, then $\sum_j a_j(x,\xi)b(x,\xi) \in \FGS^{m_1+m_2,\omega}_{\rho}$.
\end{lema}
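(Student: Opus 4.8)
The plan is to verify directly that the product $\sum_j a_j(x,\xi) b(x,\xi)$ satisfies the defining estimate of Definition~\ref{DefFormalSums} with $m$ replaced by $m_1+m_2$. First I would fix the constant $R_1 \geq 1$ coming from the formal sum $\sum_j a_j \in \FGS^{m_1,\omega}_\rho$, and recall that $b \in \GS^{m_2,\omega}_\rho$ means that for every $n$ there is $C_n^b$ with $|D^\alpha_x D^\beta_\xi b(x,\xi)| \leq C_n^b \langle(x,\xi)\rangle^{-\rho|\alpha+\beta|} e^{n\rho\varphi^\ast(\frac{|\alpha+\beta|}{n})} e^{m_2\omega(x,\xi)}$ for all $\alpha,\beta$ and all $x,\xi$ (no restriction on $(x,\xi)$). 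Then I apply the Leibniz rule: for fixed $j$,
\[
D^\alpha_x D^\beta_\xi\bigl(a_j b\bigr) = \sum_{\substack{\alpha'\leq\alpha\\ \beta'\leq\beta}} \binom{\alpha}{\alpha'}\binom{\beta}{\beta'} \bigl(D^{\alpha'}_x D^{\beta'}_\xi a_j\bigr)\bigl(D^{\alpha-\alpha'}_x D^{\beta-\beta'}_\xi b\bigr).
\]
On the region $\log(\langle(x,\xi)\rangle/R) \geq \frac{n}{j}\varphi^\ast(\frac{j}{n})$ (for a suitable $R \geq R_1$ to be fixed, possibly $R = R_1$), the estimate for $a_j$ from Definition~\ref{DefFormalSums} applies, giving a factor $\langle(x,\xi)\rangle^{-\rho(|\alpha'+\beta'|+j)} e^{N\rho\varphi^\ast(\frac{|\alpha'+\beta'|+j}{N})} e^{m_1\omega(x,\xi)}$ for whatever index $N$ we choose in that definition, while the factor from $b$ contributes $\langle(x,\xi)\rangle^{-\rho|\alpha-\alpha'+\beta-\beta'|} e^{N\rho\varphi^\ast(\frac{|\alpha-\alpha'+\beta-\beta'|}{N})} e^{m_2\omega(x,\xi)}$.

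The main point is to reassemble these into the required single estimate with exponent $|\alpha+\beta|+j$. The powers of $\langle(x,\xi)\rangle$ multiply to give exactly $\langle(x,\xi)\rangle^{-\rho(|\alpha'+\beta'|+j+|\alpha-\alpha'+\beta-\beta'|)} = \langle(x,\xi)\rangle^{-\rho(|\alpha+\beta|+j)}$, which is what we want. The two $\varphi^\ast$ exponentials are combined using the right-hand inequality in~\eqref{EqEstimationVarphi} of Lemma~\ref{lemmafistrella}(2): $N\rho\varphi^\ast(\frac{s}{N}) + N\rho\varphi^\ast(\frac{t}{N}) \leq N\rho\varphi^\ast(\frac{s+t}{N})$ with $s = |\alpha'+\beta'|+j$ and $t = |\alpha-\alpha'+\beta-\beta'|$, so $s+t = |\alpha+\beta|+j$. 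The weight factors combine as $e^{m_1\omega(x,\xi)} e^{m_2\omega(x,\xi)} = e^{(m_1+m_2)\omega(x,\xi)}$. Finally the binomial sum $\sum \binom{\alpha}{\alpha'}\binom{\beta}{\beta'} = 2^{|\alpha+\beta|}$ is absorbed: write $2^{|\alpha+\beta|} \leq e^{|\alpha+\beta|}$, and then one more application of Lemma~\ref{lemmafistrella} (shifting the index of $\varphi^\ast$, as in~\eqref{EqLLambdaVarphi}) converts $e^{|\alpha+\beta|} e^{N\rho\varphi^\ast(\frac{|\alpha+\beta|+j}{N})}$ into $C \, e^{n\rho\varphi^\ast(\frac{|\alpha+\beta|+j}{n})}$ for a slightly smaller $n$; alternatively, simply start from $N = 2n$ (or $N = nL$) so the factor $e^{|\alpha+\beta|}$ is dominated after combining. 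The resulting constant $C_n = C\, C^a_N C^b_N$ (with $N$ the adjusted index and $C^a$ the constant from Definition~\ref{DefFormalSums} for $\sum a_j$) is independent of $\alpha,\beta,j$, as required, and the region condition $\log(\langle(x,\xi)\rangle/R) \geq \frac{n}{j}\varphi^\ast(\frac{j}{n})$ is preserved (only the value of $R$ may need to be enlarged relative to $R_1$).

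The only mild obstacle is bookkeeping the index shifts in $\varphi^\ast$: one must choose the auxiliary index in the estimates for $a_j$ and $b$ (call it $N$) as a fixed multiple of the target index $n$ — say $N = nL$ or $N = 2n$ — so that after invoking~\eqref{EqEstimationVarphi} and~\eqref{EqLLambdaVarphi} to absorb the $e^{|\alpha+\beta|}$ factor one lands back at $n$ with only a harmless multiplicative constant. This is entirely analogous to the computations already carried out in the proofs of Lemma~\ref{LemmaSymbolAmplitudeTau} and Lemma~\ref{LemmaAmplitudeRevisited}, so no new idea is needed; the statement is genuinely ``easy'' in the sense claimed, and the whole argument is a routine Leibniz-rule-plus-Young-conjugate manipulation.
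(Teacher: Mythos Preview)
Your proposal is correct and is precisely the natural argument: Leibniz rule, combine the $\langle(x,\xi)\rangle$ powers, add the two $\varphi^\ast$ terms via~\eqref{EqEstimationVarphi}, and absorb the factor $2^{|\alpha+\beta|}\le e^{|\alpha+\beta|}\le e^{|\alpha+\beta|+j}$ by choosing $N=nL^k$ with $k\ge 1/\rho$ and applying~\eqref{EqLLambdaVarphi} (multiplied by $\rho$). The paper states this lemma without proof, so there is nothing to compare against; one small remark is that no enlargement of $R$ is actually needed, since $N>n$ and the monotonicity of $t\mapsto\varphi^\ast(t)/t$ give $\tfrac{N}{j}\varphi^\ast(\tfrac{j}{N})\le \tfrac{n}{j}\varphi^\ast(\tfrac{j}{n})$, so the target region is already contained in the region where the $a_j$-estimate with index $N$ holds.
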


The following result is in the spirit of Zanghirati~\cite{Z1986pseudodifferential} and Fern\'andez, Galbis, and Jornet~\cite{FGJ2004hypo} (see also Cappiello, Pilipovi\'c, and Prangoski~\cite{CPP}).
\begin{theo}\label{TheoParametrix}
Let $\omega$ be a weight function and let $\sigma$ be a subadditive weight function with $\omega(t^{1/\rho}) = o(\sigma(t))$ as $t \to \infty$. Let $p(x,\xi) \in \GS^{|m|,\omega}_{\rho}$ be such that, for some $R\geq 1$:
\begin{itemize}
\item[(i)] $\displaystyle |p(x,\xi)| \geq \frac1{R} e^{-|m|\omega(x,\xi)}$ for $\langle(x,\xi)\rangle \geq R$;
\item[(ii)] There exist $C>0$ and $n \in \mathbb{N}$ such that
\[ |D^{\alpha}_x D^{\beta}_{\xi} p(x,\xi)| \leq C^{|\alpha+\beta|} \langle(x,\xi)\rangle^{-\rho|\alpha+\beta|} e^{\frac1{n}\varphi_{\sigma}^{\ast}(n|\alpha|)} e^{\frac1{n} \varphi_{\sigma}^{\ast}(n|\beta|)} |p(x,\xi)|, \]
for $\alpha,\beta \in \mathbb{N}_0^d$, $\langle(x,\xi)\rangle \geq R$.
\end{itemize}
Then there exists $q(x,\xi) \in \GS^{|m|,\omega}_{\rho}$ such that $q \circ p \sim 1$ in $\FGS^{|m|,\omega}_{\rho}$. 
\end{theo}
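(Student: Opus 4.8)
The plan is to construct the parametrix $q$ by the standard iterative formal-sum construction, using the symbolic calculus for compositions developed in Section~\ref{SectionSymbolicCalculus} (in the case $\tau=\tau_1=\tau_2=0$, i.e.\ the left-symbol composition $a\circ b=\sum_\gamma\frac1{\gamma!}\partial^\gamma_\xi a\,D^\gamma_x b$). First I would set $q_0(x,\xi):=\chi(x,\xi)/p(x,\xi)$, where $\chi\in\mathcal E_\omega(\mathbb R^{2d})$ is a cut-off equal to $1$ for $\langle(x,\xi)\rangle\ge 2R$ and equal to $0$ for $\langle(x,\xi)\rangle\le R$; hypothesis (i) guarantees $p$ does not vanish on the support of $\chi$, and hypotheses (i)--(ii) together with the Faà di Bruno / Leibniz rule (differentiating $1/p$) yield that $q_0\in\GS^{|m|,\omega}_\rho$, the factor $e^{|m|\omega}$ in the numerator bound coming from $|1/p|\le R\,e^{|m|\omega}$. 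Then $q_0\circ p = 1 + r$, where $r:=q_0 p-1$ vanishes for $\langle(x,\xi)\rangle\ge 2R$ (so it contributes only a lower-order, in fact $\omega$-regularizing, term) plus a remainder coming from the non-principal terms $\sum_{|\gamma|\ge1}\frac1{\gamma!}\partial^\gamma_\xi q_0\,D^\gamma_x p$; the point is that each such term gains a factor $\langle(x,\xi)\rangle^{-\rho|\gamma|}$ from the two symbol estimates, so that $q_0\circ p - 1 \in \FGS^{|m|-|m|,\omega}_\rho=\FGS^{0,\omega}_\rho$ with vanishing $0$-th term.

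Next I would iterate: set $t:=1-(q_0\circ p)\in\FGS^{0,\omega}_\rho$ with $t_0=0$, and formally put $q\sim q_0\circ\big(\sum_{k=0}^\infty t^{\circ k}\big)$, i.e.\ define recursively $q^{(0)}:=q_0$ and $q^{(k+1)}:=q_0 + q^{(k)}\circ t$. The key estimates needed to make this precise and to show the resulting formal sum lies in $\FGS^{|m|,\omega}_\rho$ are: (a) Lemma~\ref{Lemma324TesisDavid}, which tells us that multiplying a formal sum in $\FGS^{m_1,\omega}_\rho$ by a symbol in $\GS^{m_2,\omega}_\rho$ lands in $\FGS^{m_1+m_2,\omega}_\rho$; (b) that the composition $\circ$ of a formal sum in $\FGS^{m_1,\omega}_\rho$ with one in $\FGS^{m_2,\omega}_\rho$ lies in $\FGS^{m_1+m_2,\omega}_\rho$ and shifts the order index, which is exactly the content of the symbolic calculus (Theorem~\ref{TheoCompositionTau} specialized to $\tau=0$) combined with Lemma~\ref{Lemma323TesisDavid} to control the sums of the $e^{\lambda\varphi^\ast}/j!$ factors; and (c) the fact that $t$ has $t_0=0$, so that the $j$-th homogeneous component of $t^{\circ k}$ vanishes for $j<k$, which makes the formal series $\sum_k t^{\circ k}$ converge as a formal sum (only finitely many terms contribute to each component) and guarantees $q^{(k)}$ stabilizes componentwise. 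Here the role of $\sigma$ and the condition $\omega(t^{1/\rho})=o(\sigma(t))$ is to absorb the factorial-type growth of the derivative bounds in (ii) into the weight-function estimates (via Proposition~\ref{PropTechnical} and the comparison $e^{\frac1n\varphi^\ast_\sigma(n|\alpha|)}$ versus $e^{n\rho\varphi^\ast_\omega(|\alpha|/n)}$), which is precisely what is needed to check the defining inequality of $\FGS^{|m|,\omega}_\rho$ for the iterates.

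Finally I would realize the formal sum: let $q(x,\xi):=\sum_j\varphi_j(x,\xi)q_j(x,\xi)$ be the genuine symbol in $\GS^{|m|,\omega}_\rho$ associated by \cite[Theorem 4.6]{AJ} to the formal sum $\sum_j q_j\in\FGS^{|m|,\omega}_\rho$ produced above. Since $q\sim\sum q_j$ and composition is compatible with equivalence of formal sums (which again follows from the symbolic calculus of Section~\ref{SectionSymbolicCalculus}), we get $q\circ p\sim\big(\sum_j q_j\big)\circ p$; and by construction $\big(\sum_j q_j\big)\circ p=q_0\circ p+\big(\sum_k t^{\circ k}\big)\circ p\cdot(\text{telescoping})$, more precisely the recursion gives $\big(\sum_j q_j\big)\circ p = 1$ in $\FGS^{|m|,\omega}_\rho$ as a telescoping identity $q_0\circ p + t\cdot(\text{expansion}) = 1$. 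Hence $q\circ p\sim 1$ in $\FGS^{|m|,\omega}_\rho$, as claimed.

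\textbf{Main obstacle.} The routine part is the construction of $q_0$ and the telescoping algebra; the delicate part is verifying that each iterate $q^{(k)}$, and hence the limiting formal sum, actually satisfies the precise defining estimate of $\FGS^{|m|,\omega}_\rho$ in Definition~\ref{DefFormalSums} — in particular that the constant $R$ in that definition can be chosen \emph{uniformly} in $k$, and that the $e^{n\rho\varphi^\ast(\cdot)}$ factors and the $\langle(x,\xi)\rangle^{-\rho(\cdot)}$ gains combine correctly through repeated compositions. This is exactly where the hypothesis $\omega(t^{1/\rho})=o(\sigma(t))$, the subadditivity of $\sigma$ (giving Lemma~\ref{Lemma323TesisDavid}), Lemma~\ref{LemmaVandermondeIdentity}, and the bookkeeping of Lemma~\ref{Lemma324TesisDavid} are all used; handling the interaction of the two different weights $\omega$ and $\sigma$ while keeping the order index shifting by one at each composition step is the technical heart of the argument.
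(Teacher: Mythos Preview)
Your plan is sound and correctly isolates the real difficulty, but the paper takes a more direct route than the Neumann-series framing you propose. Instead of setting $t:=1-q_0\circ p$ and iterating $q_0\circ(\sum_k t^{\circ k})$, the paper defines the homogeneous components $q_j$ explicitly by the recursion
\[
q_j := -q_0 \sum_{0<|\epsilon+\gamma|\le j}\frac{(-1)^{|\epsilon|}}{\epsilon!\gamma!}\,\tau^{|\epsilon|}(1-\tau)^{|\gamma|}\,(\partial^{\gamma}_{\xi}D^{\epsilon}_x q_{j-|\epsilon+\gamma|})\,(\partial^{\epsilon}_{\xi}D^{\gamma}_x p),
\]
which is exactly the relation forcing $(\sum q_j\circ p)_j=0$ for $j\ge 1$; it then proves, by a single induction on $j$ (after an induction on $|\alpha+\beta|$ for $q_0$), the key $\sigma$-scale estimate
\[
|D^{\alpha}_x D^{\beta}_{\xi}q_j|\le C_2^{|\alpha+\beta|}C_3^{\,j}\,\langle(x,\xi)\rangle^{-\rho(|\alpha+\beta|+2j)}\,e^{\frac1n\varphi^\ast_\sigma(n(|\alpha+\beta|+2j))}\,e^{|m|\omega(x,\xi)},
\]
with $C_2,C_3$ chosen once and for all so that two geometric series close up; Lemma~\ref{Lemma323TesisDavid} and the Vandermonde identity (Lemma~\ref{LemmaVandermondeIdentity}) handle the combinatorics, and only afterwards is the estimate transferred from $\sigma$ to $\omega$ via~\cite[Lemma~2.9(1)]{AJ}. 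The cutoff is applied at the very end (multiplying each $q_j$ by $1-\phi$), not at the level of $q_0$, and the conclusion uses Lemma~\ref{Lemma324TesisDavid} together with \cite[Theorem~4.6, Proposition~4.14]{AJ}.

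The two constructions yield the same components $q_j$ (your recursion $q^{(k+1)}=q_0+q^{(k)}\circ t$ is just the telescoped form of the paper's). The practical difference is that the paper never needs a statement of the form ``composition of formal sums stays in $\FGS$ with uniform constants in the iterate'': all of that is absorbed into the single inductive estimate above, with the constants $C_2^{|\alpha+\beta|}C_3^{\,j}$ playing the role of your hoped-for uniformity in $k$. If you pursue the Neumann-series route, the step you flag as the obstacle---uniform control of $t^{\circ k}$---will in effect force you to unwind to exactly this induction; note in particular the exponent $2j$ (not $j$) in the $\langle(x,\xi)\rangle$-decay, which arises because both $\partial^\gamma_\xi D^\epsilon_x q_{j-|\epsilon+\gamma|}$ and $\partial^\epsilon_\xi D^\gamma_x p$ contribute a factor $\langle(x,\xi)\rangle^{-\rho|\epsilon+\gamma|}$ and is what makes the constants close.
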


\begin{proof}
We set
\[ q_0(x,\xi) = \frac1{p(x,\xi)}, \qquad \langle(x,\xi)\rangle \geq R. \]
We show by induction on $|\alpha+\beta|$ that there exists $C_1>0$ such that
\begin{equation}\label{EqTheo326TesisDavid1}
|D^{\alpha}_x D^{\beta}_{\xi} q_0(x,\xi)| \leq C_1^{|\alpha+\beta|} \langle(x,\xi)\rangle^{-\rho|\alpha+\beta|} e^{\frac1{n}\varphi_{\sigma}^{\ast}(n|\alpha|)} e^{\frac1{n}\varphi_{\sigma}^{\ast}(n|\beta|)} |q_0(x,\xi)|
\end{equation}
for all $\alpha,\beta \in \mathbb{N}_0^d$, $\langle(x,\xi)\rangle \geq R$. Indeed, the inequality is true for $\alpha=\beta=0$. Now,  differentiating formula $p(x,\xi)q_0(x,\xi)=1$, we obtain
\[ p(x,\xi) D^{\alpha}_x D^{\beta}_{\xi} q_0(x,\xi) = - \sum_{0 \neq (\widehat{\alpha},\widehat{\beta})\le (\alpha,\beta)} \frac{\alpha!}{\widehat{\alpha}! (\alpha-\widehat{\alpha})!} \frac{\beta!}{\widehat{\beta}! (\beta-\widehat{\beta})!} D^{\widehat{\alpha}}_x D^{\widehat{\beta}}_{\xi} p(x,\xi) D^{\alpha-\widehat{\alpha}}_x D^{\beta-\widehat{\beta}}_{\xi} q_0(x,\xi). \]
Now, we assume that the inequality \eqref{EqTheo326TesisDavid1} is true for $(\widehat{\alpha},\widehat{\beta})<(\alpha,\beta)$. Using condition $(ii)$, we obtain
\begin{align*}
& |p(x,\xi) D^{\alpha}_x D^{\beta}_{\xi} q_0(x,\xi)| \\
& \qquad \leq \sum_{0 \neq (\widehat{\alpha},\widehat{\beta}) \leq (\alpha,\beta)} \frac{\alpha!}{\widehat{\alpha}! (\alpha-\widehat{\alpha})!} \frac{\beta!}{\widehat{\beta}! (\beta-\widehat{\beta})!} C^{|\widehat{\alpha}+\widehat{\beta}|} \langle(x,\xi)\rangle^{-\rho|\widehat{\alpha}+\widehat{\beta}|} e^{\frac1{n}\varphi_{\sigma}^{\ast}(n|\widehat{\alpha}|)} e^{\frac1{n}\varphi_{\sigma}^{\ast}(n|\widehat{\beta}|)} |p(x,\xi)| \times \\
& \qquad \quad \times C_1^{|\alpha-\widehat{\alpha}+\beta-\widehat{\beta}|} \langle(x,\xi)\rangle^{-\rho|\alpha-\widehat{\alpha}+\beta-\widehat{\beta}|} e^{\frac1{n}\varphi_{\sigma}^{\ast}(n|\alpha-\widehat{\alpha}|)} e^{\frac1{n}\varphi_{\sigma}^{\ast}(n|\beta-\widehat{\beta}|)} |q_0(x,\xi)| .
\end{align*}
Since $\frac{\alpha!}{\widehat{\alpha}! (\alpha-\widehat{\alpha})!} \frac{\beta!}{\widehat{\beta}! (\beta-\widehat{\beta})!} \leq \frac{|\alpha|!}{|\widehat{\alpha}|! |\alpha-\widehat{\alpha}|!} \frac{|\beta|!}{|\widehat{\beta}|! |\beta-\widehat{\beta}|!}$, we obtain, by Lemma~\ref{Lemma323TesisDavid},
\[ |\alpha|! \frac{e^{\frac1{n}\varphi_{\sigma}^{\ast}(n|\widehat{\alpha}|)}}{|\widehat{\alpha}|!} \frac{e^{\frac1{n}\varphi_{\sigma}^{\ast}(n|\alpha-\widehat{\alpha}|)}}{|\alpha-\widehat{\alpha}|!} |\beta|! \frac{e^{\frac1{n}\varphi_{\sigma}^{\ast}(n|\widehat{\beta}|)}}{|\widehat{\beta}|!} \frac{e^{\frac1{n}\varphi_{\sigma}^{\ast}(n|\beta-\widehat{\beta}|)}}{|\beta-\widehat{\beta}|!} \leq e^{\frac1{n}\varphi_{\sigma}^{\ast}(n|\alpha|)} e^{\frac1{n}\varphi_{\sigma}^{\ast}(n|\beta|)}. \]
Thus
\[ |D^{\alpha}_x D^{\beta}_{\xi} q_0(x,\xi)| \leq C_1^{|\alpha+\beta|} \langle(x,\xi)\rangle^{-\rho|\alpha+\beta|} e^{\frac1{n}\varphi_{\sigma}^{\ast}(n|\alpha|)} e^{\frac1{n}\varphi_{\sigma}^{\ast}(n|\beta|)} |q_0(x,\xi)| \sum_{0 \neq (\widehat{\alpha},\widehat{\beta}) \leq (\alpha,\beta)} \Big( \frac{C}{C_1} \Big)^{|\widehat{\alpha}+\widehat{\beta}|}. \]
Finally, the fact that
\[ \sum_{0 \neq (\widehat{\alpha},\widehat{\beta})\le (\alpha,\beta)} \Big( \frac{C}{C_1} \Big)^{|\widehat{\alpha}+\widehat{\beta}|}\le \sum_{k=1}^{|\alpha+\beta|} \sum_{|\eta|=k} \Big( \frac{C}{C_1}\Big)^k \leq \sum_{k=1}^{|\alpha+\beta|} \Big( \frac{dC}{C_1}\Big)^k \]
completes the proof of \eqref{EqTheo326TesisDavid1} if we take $C_1>0$ such that
\[ \sum_{k=1}^{\infty} \Big( \frac{dC}{C_1}\Big)^k < 1.\]

For $j \in \mathbb{N}$, we define recursively
\[ q_j(x,\xi) := -q_0(x,\xi) \! \! \! \! \sum_{0 < |\epsilon+\gamma| \leq j} \! \! \! \! \frac{(-1)^{|\epsilon|}}{\epsilon! \gamma!} \tau^{|\epsilon|} (1-\tau)^{|\gamma|} (\partial^{\gamma}_{\xi} D^{\epsilon}_x q_{j-|\epsilon+\gamma|}(x,\xi)) (\partial^{\epsilon}_{\xi} D^{\gamma}_x p(x,\xi)). \]
We show that there exist constants $C_2,C_3>0$ with $C_1 < C_2 < C_3$ such that
\begin{equation}\label{EqTheo326TesisDavid2}
|D^{\alpha}_x D^{\beta}_{\xi} q_j(x,\xi)| \leq C_2^{|\alpha+\beta|} C_3^j \langle(x,\xi)\rangle^{-\rho(|\alpha+\beta|+2j)} e^{\frac1{n}\varphi_{\sigma}^{\ast}(n(|\alpha+\beta|+2j))} e^{|m|\omega(x,\xi)},
\end{equation}
for all $\alpha,\beta \in \mathbb{N}_0^d$, $\langle(x,\xi)\rangle \geq R$. We proceed by induction on $j \in \mathbb{N}_0$. First, observe that formula~\eqref{EqTheo326TesisDavid1} implies formula \eqref{EqTheo326TesisDavid2} for $j=0$, since $|q_0(x,\xi)| \leq Re^{|m|\omega(x,\xi)}$ for $\langle(x,\xi)\rangle \geq R$ (from condition $(i)$). Now, assume that~\eqref{EqTheo326TesisDavid2} holds for all $0 \leq l < j$ (where $C_3>C_2>C_1$, and $C_2,C_3>0$ are large enough). Then, by the definition of $q_j(x,\xi)$, we have
\begin{align*}
|D^{\alpha}_x D^{\beta}_{\xi} q_j(x,\xi)| &\leq \sum_{\substack{\alpha_1+\alpha_2+\alpha_3=\alpha \\ \beta_1+\beta_2+\beta_3=\beta}} \frac{\alpha!}{\alpha_1!\alpha_2!\alpha_3!} \frac{\beta!}{\beta_1!\beta_2!\beta_3!} |D^{\alpha_1}_x D^{\beta_1}_{\xi} q_0(x,\xi)| \! \sum_{0 < |\epsilon+\gamma| \leq j} \! \frac1{\epsilon! \gamma!} \times \\
& \ \ \times |\tau|^{|\epsilon|} |1-\tau|^{|\gamma|} |D^{\alpha_2+\epsilon}_x D^{\beta_2+\gamma}_{\xi} q_{j-|\epsilon+\gamma|}(x,\xi)| |D^{\alpha_3+\gamma}_x D^{\beta_3+\epsilon}_{\xi} p(x,\xi)|.
\end{align*}
We use formula~\eqref{EqTheo326TesisDavid1} for the derivatives of $q_0(x,\xi)$, the inductive hypothesis~\eqref{EqTheo326TesisDavid2} for the ones of $q_{j-|\mu|}(x,\xi)$, and condition $(ii)$ for the derivatives of $p(x,\xi)$. All this implies
\begin{align} \nonumber
|D^{\alpha}_x D^{\beta}_{\xi} q_j(x,\xi)| &\leq \sum_{\substack{\alpha_1+\alpha_2+\alpha_3=\alpha \\ \beta_1+\beta_2+\beta_3=\beta}} \frac{\alpha!}{\alpha_1!\alpha_2!\alpha_3!} \frac{\beta!}{\beta_1!\beta_2!\beta_3!} C_1^{|\alpha_1+\beta_1|} \langle(x,\xi)\rangle^{-\rho|\alpha_1+\beta_1|} e^{\frac1{n}\varphi_{\sigma}^{\ast}(n|\alpha_1|)} \times \\ \nonumber
& \quad \ \ \times e^{\frac1{n}\varphi_{\sigma}^{\ast}(n|\beta_1|)} |q_0(x,\xi)| \sum_{0 < |\epsilon+\gamma| \leq j} \frac1{\epsilon! \gamma!} |\tau|^{|\epsilon|} |1-\tau|^{|\gamma|} C_2^{|\alpha_2+\epsilon+\beta_2+\gamma|} C_3^{j-|\epsilon+\gamma|} \times \\ \nonumber
& \quad \ \ \times \langle(x,\xi)\rangle^{-\rho(|\alpha_2+\epsilon+\beta_2+\gamma|+2(j-|\epsilon+\gamma|))} e^{\frac1{n}\varphi_{\sigma}^{\ast}(n(|\alpha_2+\epsilon+\beta_2+\gamma|+2(j-|\epsilon+\gamma|)))} e^{|m|\omega(x,\xi)} \times \\ \nonumber
& \quad \ \ \times C^{|\alpha_3+\gamma+\beta_3+\epsilon|} \langle(x,\xi)\rangle^{-\rho|\alpha_3+\gamma+\beta_3+\epsilon|} e^{\frac1{n}\varphi_{\sigma}^{\ast}(n|\alpha_3+\gamma|)} e^{\frac1{n}\varphi_{\sigma}^{\ast}(n|\beta_3+\epsilon|)} |p(x,\xi)| \\ \label{q_j}
\begin{split}
&= \langle(x,\xi)\rangle^{-\rho(|\alpha+\beta|+2j)} e^{|m|\omega(x,\xi)} \sum_{\substack{\alpha_1+\alpha_2+\alpha_3=\alpha \\ \beta_1+\beta_2+\beta_3=\beta}} \frac{\alpha!}{\alpha_1!\alpha_2!\alpha_3!} \frac{\beta!}{\beta_1!\beta_2!\beta_3!} C_1^{|\alpha_1+\beta_1|} \times \\
& \quad \ \ \times e^{\frac1{n}\varphi_{\sigma}^{\ast}(n|\alpha_1|)} e^{\frac1{n}\varphi_{\sigma}^{\ast}(n|\beta_1|)} \sum_{0 < |\epsilon+\gamma| \leq j} \frac1{\epsilon! \gamma!} |\tau|^{|\epsilon|} |1-\tau|^{|\gamma|} C_2^{|\alpha_2+\epsilon+\beta_2+\gamma|} C_3^{j-|\epsilon+\gamma|} \times \\
& \quad \ \ \times e^{\frac1{n}\varphi_{\sigma}^{\ast}(n(|\alpha_2+\beta_2|+2j-|\epsilon+\gamma|))} C^{|\alpha_3+\gamma+\beta_3+\epsilon|} e^{\frac1{n}\varphi_{\sigma}^{\ast}(n|\alpha_3+\gamma|)} e^{\frac1{n}\varphi_{\sigma}^{\ast}(n|\beta_3+\epsilon|)}.
\end{split}
\end{align}

To estimate the right-hand side of~\eqref{q_j} we multiply and divide by $$(|\alpha_2+\beta_2|+2j-|\epsilon+\gamma|)! |\alpha_3+\gamma|! |\beta_3+\epsilon|!.$$ Then, as $$\frac{\alpha!}{\alpha_1! \alpha_2! \alpha_3!} \frac{\beta!}{\beta_1! \beta_2! \beta_3!} \leq \frac{|\alpha|!}{|\alpha_1|! |\alpha_2|! |\alpha_3|!} \frac{|\beta|!}{|\beta_1|! |\beta_2|! |\beta_3|!},$$ we have, by Lemma~\ref{Lemma323TesisDavid},
\begin{align*}
& \frac{e^{\frac1{n}\varphi_{\sigma}^{\ast}(n|\alpha_1|)}}{|\alpha_1|!} \frac{e^{\frac1{n}\varphi_{\sigma}^{\ast}(n|\beta_1|)}}{|\beta_1|!} \frac{e^{\frac1{n}\varphi_{\sigma}^{\ast}(n(|\alpha_2+\beta_2|+2j-|\epsilon+\gamma|))}}{(|\alpha_2+\beta_2|+2j-|\epsilon+\gamma|)!} \frac{e^{\frac1{n}\varphi_{\sigma}^{\ast}(n|\alpha_3+\gamma|)}}{|\alpha_3+\gamma|!} \frac{e^{\frac1{n}\varphi_{\sigma}^{\ast}(n|\beta_3+\epsilon|)}}{|\beta_3+\epsilon|!}  \\
& \quad \quad \leq \frac1{(|\alpha+\beta|+2j)!} e^{\frac1{n}\varphi_{\sigma}^{\ast}(n(|\alpha+\beta|+2j))}.
\end{align*}
Now, we see that
\begin{equation}\label{EqTheo326TesisDavid3}
\frac{|\alpha|!}{|\alpha_2|! |\alpha_3|!} \frac{|\beta|!}{|\beta_2|! |\beta_3|!} |\alpha_3+\gamma|! |\beta_3+\epsilon|! \frac{(|\alpha_2+\beta_2|+2j-|\epsilon+\gamma|)!}{(|\alpha+\beta|+2j)!} \leq 2^{|\alpha_1+\alpha_3|} 2^{|\beta_1+\beta_3|}.
\end{equation}
Indeed, we multiply and divide by $(|\alpha_1+\alpha_3| + |\beta_1+\beta_3| + |\epsilon+\gamma|)!$ to get, by the properties of the multinomial coefficients,
\begin{align*}
& \frac{|\alpha|!}{|\alpha_2|! |\alpha_3|!} \frac{|\beta|!}{|\beta_2|! |\beta_3|!} \frac{|\alpha_3+\gamma|! |\beta_3+\epsilon|!}{(|\alpha_1+\alpha_3| + |\beta_1+\beta_3| + |\epsilon+\gamma|)!} \frac1{\binom{|\alpha+\beta|+2j}{|\alpha_2+\beta_2| + 2j - |\epsilon+\gamma|}} \\
& \qquad \leq \frac{|\alpha|!}{|\alpha_2|! |\alpha_3|!} \frac{|\beta|!}{|\beta_2|! |\beta_3|!} \frac1{|\alpha_1|! |\beta_1|!} \frac1{\binom{|\alpha+\beta|+2j}{|\alpha_2+\beta_2| + 2j - |\epsilon+\gamma|}}.
\end{align*}
As we have, for $\alpha=\alpha_1+\alpha_2+\alpha_3$,
\[ \frac{|\alpha|!}{|\alpha_1|! |\alpha_2|! |\alpha_3|!} = \frac{|\alpha_1+\alpha_3|!}{|\alpha_1|! |\alpha_3|!} \binom{|\alpha|}{|\alpha_2|} \leq 2^{|\alpha_1+\alpha_3|} \binom{|\alpha|}{|\alpha_2|}, \]
(and in the same way for $\beta=\beta_1+\beta_2+\beta_3$), we deduce formula~\eqref{EqTheo326TesisDavid3} by Lemma~\ref{LemmaVandermondeIdentity}. We then have, from~\eqref{q_j},
\begin{align*}
|D^{\alpha}_x D^{\beta}_{\xi} q_j(x,\xi)| &\leq \langle(x,\xi)\rangle^{-\rho(|\alpha+\beta|+2j)} e^{\frac1{n} \varphi^{\ast}_{\sigma}(n(|\alpha+\beta|+2j))} e^{|m|\omega(x,\xi)} \times \\
&\quad \times \sum_{\substack{\alpha_1+\alpha_2+\alpha_3=\alpha \\ \beta_1+\beta_2+\beta_3=\beta}} 2^{|\alpha_1+\alpha_3|} 2^{|\beta_1+\beta_3|} C_1^{|\alpha_1+\beta_1|} C_2^{|\alpha_2+\beta_2|} C_3^j C^{|\alpha_3+\beta_3|} \times \\
&\quad \times \sum_{0<|\epsilon+\gamma|\leq j} \frac1{\epsilon! \gamma!} |\tau|^{|\epsilon|} |1-\tau|^{|\gamma|} C_2^{|\epsilon+\gamma|} C_3^{-|\epsilon+\gamma|} C^{|\epsilon+\gamma|}.
\end{align*}

Since
\begin{align*}
C_2^{|\alpha+\beta|} C_3^j \sum_{\substack{\alpha_1+\alpha_2+\alpha_3=\alpha \\ \beta_1+\beta_2+\beta_3=\beta}} \Big(\frac{2C_1}{C_2}\Big)^{|\alpha_1+\beta_1|} \Big(\frac{2C}{C_2}\Big)^{|\alpha_3+\beta_3|} &\leq C_2^{|\alpha+\beta|} C_3^j \sum_{\substack{\alpha_1+\alpha_2+\alpha_3=\alpha \\ \beta_1+\beta_2+\beta_3=\beta}} \Big(\frac{2CC_1}{C_2}\Big)^{|\alpha_1+\alpha_3+\beta_1+\beta_3|} \\
&\leq C_2^{|\alpha+\beta|} C_3^j \sum_{k=0}^{|\alpha+\beta|} \sum_{|\eta|=k} \Big(\frac{2CC_1}{C_2}\Big)^k,
\end{align*}
we take $C_2>0$ large enough so that
\[ \sum_{k=0}^{\infty} \Big( \frac{2dCC_1}{C_2} \Big)^k < 2. \]
In addition, we put $C_3>0$ large enough satisfying
\begin{align*}
\sum_{0<|\epsilon|\leq j} \frac1{\epsilon!} \Big( \frac{CC_2|\tau|}{C_3} \Big)^{|\epsilon|} \! \! \sum_{0<|\gamma| \leq j} \frac1{\gamma!} \Big( \frac{CC_2|1-\tau|}{C_3} \Big)^{|\gamma|} &\leq \Big( \sum_{0<k\leq j} \frac1{k!} \Big( \frac{d^2 CC_2 \max\{|\tau|,|1-\tau|\}}{C_3} \Big)^{k}\Big)^2 \\
&\leq \Big( \sum_{k=1}^{\infty} \frac1{k!} \Big( \frac{d^2 CC_2 \max\{|\tau|,|1-\tau|\}}{C_3} \Big)^{k}\Big)^2 < 1/2.
\end{align*}
This proves~\eqref{EqTheo326TesisDavid2}. Furthermore, by~\cite[Lemma 2.9(1)]{AJ} we have that for all $\ell \in \mathbb{N}$ there exists $C_{\ell}>0$ such that, for each $j$,
\[ |D^{\alpha}_x D^{\beta}_{\xi} q_j(x,\xi)| \leq C_{\ell} C_2^{|\alpha+\beta|} C_3^j \langle(x,\xi)\rangle^{-\rho(|\alpha+\beta|+2j)} e^{\ell \rho \varphi_{\omega}^{\ast}\big(\frac{|\alpha+\beta|+2j}{\ell}\big)} e^{|m|\omega(x,\xi)}, \]
for all $\alpha,\beta \in \mathbb{N}_0^d$ and $\langle(x,\xi)\rangle \geq R$ and, in particular, the estimate of Definition~\ref{DefFormalSums} follows.

Now, we extend $q_j(x,\xi)$ to $C^{\infty}(\mathbb{R}^{2d})$ for each $j \in \mathbb{N}_0$. To this aim, we take $\phi \in \mathcal{D}_{\sigma}(\mathbb{R}^{2d})$, supported in $\{ (x,\xi) \in \mathbb{R}^{2d} : \langle(x,\xi)\rangle \leq 2R \}$ and equal to $1$ when $\langle(x,\xi)\rangle \leq R$. Then, we set $\widetilde{q_j}(x,\xi) := q_j(x,\xi)(1-\phi)(x,\xi)$, which satisfies $\widetilde{q_j}=q_j$ if $\langle(x,\xi)\rangle > 2R$ and vanishes if $\langle(x,\xi)\rangle \leq R$. It is easy to see that $1-\phi \in \GS^{0,\omega}_{\rho}$. 
Hence, by Lemma~\ref{Lemma324TesisDavid}, $\widetilde{q_j}(x,\xi) \in \FGS^{|m|,\omega}_{\rho}$.

We identify $\widetilde{q_j}=q_{j}$ and we show that $\sum q_j \circ p \sim 1$. For $j>0$, by the definition of $q_j(x,\xi)$ we have
\begin{align*}
q_j(x,\xi)p(x,\xi) &= - \! \! \sum_{0<|\epsilon+\gamma| \leq j} \! \! \frac{(-1)^{|\epsilon|}}{\epsilon! \gamma!} \tau^{|\epsilon|} (1-\tau)^{|\gamma|} (\partial^{\gamma}_{\xi} D^{\epsilon}_x q_{j-|\epsilon+\gamma|}(x,\xi)) (\partial^{\epsilon}_{\xi} D^{\gamma}_x p(x,\xi)) \\
&= -r_j(x,\xi) + q_j(x,\xi)p(x,\xi),
\end{align*}
where $\sum r_j := \sum q_j \circ p$ (cf.~\cite[Proposition 4.13]{AJ}). Thus, $r_j(x,\xi)=0$ for $j>0$. Also, by the definition of composition, $r_0(x,\xi)=q_0(x,\xi)p(x,\xi)=1$ if $\langle(x,\xi)\rangle > 2R$, which shows that $\sum q_j \circ p \sim 1$. Since $\sum q_j$ is a formal sum in $\FGS^{|m|,\omega}_{\rho}$, by~\cite[Theorem 4.6]{AJ} there exists $q(x,\xi) \in \GS^{|m|,\omega}_{\rho}$ such that $q \sim \sum q_j$. Finally,~\cite[Proposition 4.14]{AJ} yields $q \circ p \sim 1$, and the proof is complete.
\end{proof}


\begin{cor}
\label{regularity}
Let $\omega$ be a weight function and let $\sigma$ be a weight function that satisfies ($\alpha_0$) with $\omega(t^{1/\rho}) = o(\sigma(t))$ as $t \to \infty$. If $p(x,\xi) \in \GS^{m,\omega}_{\rho}$ satisfies the hypotheses of Theorem~\ref{TheoParametrix}, any quantization of the corresponding pseudodifferential operator $P$ is $\omega$-regular.
\end{cor}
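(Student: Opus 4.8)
The plan is to turn the left parametrix produced by Theorem~\ref{TheoParametrix} into a left parametrix at the level of operators and then run the usual hypoellipticity deduction. First I would reduce to the hypotheses of Theorem~\ref{TheoParametrix}: since $\sigma$ satisfies $(\alpha_0)$, by~\cite{PV1984almost} it is equivalent to a subadditive weight function $\sigma_0$; equivalent weight functions define the same ultradifferentiable classes, the relation $\omega(t^{1/\rho})=o(\sigma(t))$ passes to $\sigma_0$, and from $\varphi_{\sigma_0}\le C\varphi_\sigma+C$ one gets $\tfrac1n\varphi^\ast_\sigma(n|\alpha|)\le\tfrac1{n'}\varphi^\ast_{\sigma_0}(n'|\alpha|)+1$ with $n'$ a multiple of $n$, so hypothesis~$(ii)$ also transfers (after enlarging the constant). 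Noting also that $\GS^{m,\omega}_\rho\subseteq\GS^{|m|,\omega}_\rho$, we may apply Theorem~\ref{TheoParametrix}, whose construction depends on a choice of $\tau\in\R$: for each such $\tau$ we obtain $q=q_\tau\in\GS^{|m|,\omega}_\rho$ with $q_\tau\circ p\sim 1$ in $\FGS^{|m|,\omega}_\rho$, where $\circ$ denotes the $\tau$-composition of symbols.

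Next I would pass to operators. Fix $\tau\in\R$, abbreviate $q=q_\tau$, and let $P:=\OpTau(p)$ and $Q:=\OpTau(q)$; both map $\Sch_\omega(\R^d)$ continuously into itself and extend continuously to $\Sch'_\omega(\R^d)$ by~\cite[Proposition~3.10]{AJ}. By the composition calculus for quantizations (Theorem~\ref{TheoCompositionTau}, in the form given by the Example following Lemma~\ref{LemmaEqualityTheo5.5BBR}), the $\tau$-symbol of $Q\circ P$ is equivalent to $q\circ p$, hence to the formal sum $1$. Since two pseudodifferential operators whose $\tau$-symbols have equivalent formal sums differ by an $\omega$-regularizing operator — this follows from the reconstruction $a=\sum_j\varphi_j a_j$ together with~\cite[Theorem~4.6]{AJ} and the uniqueness statement of Theorem~\ref{TheoLarguisimoTau} — we obtain $Q\circ P=\Id+R$ with $R$ an $\omega$-regularizing operator; in particular, by Proposition~\ref{PropExampleExtensionPseudodifferentialOperator}, $\widetilde R$ maps $\Sch'_\omega(\R^d)$ continuously into $\Sch_\omega(\R^d)$.

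Finally I would close the argument. Let $u\in\Sch'_\omega(\R^d)$ be such that $Pu:=\widetilde Pu\in\Sch_\omega(\R^d)$. Applying $Q$ as a continuous operator on $\Sch_\omega(\R^d)$ gives $Q(Pu)\in\Sch_\omega(\R^d)$. On the other hand, by density of $\Sch_\omega(\R^d)$ in $\Sch'_\omega(\R^d)$ and continuity one has $\widetilde Q\circ\widetilde P=\widetilde{Q\circ P}=\Id+\widetilde R$ on $\Sch'_\omega(\R^d)$, so $Q(Pu)=\widetilde Q(\widetilde Pu)=u+\widetilde Ru$. Since $\widetilde Ru\in\Sch_\omega(\R^d)$, it follows that $u=Q(Pu)-\widetilde Ru\in\Sch_\omega(\R^d)$. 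Hence $\OpTau(p)$ is $\omega$-regular, and as $\tau\in\R$ was arbitrary, every quantization of $P$ is $\omega$-regular.

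I expect the main obstacle to lie in the second paragraph: identifying the $\tau$-symbol of $Q\circ P$ with the composition $q\circ p$ via Theorem~\ref{TheoCompositionTau}, and checking that $q\circ p\sim 1$ really yields the identity modulo $\omega$-regularizing operators once the normalization of the quantization (under which the constant symbol $1$ corresponds to $\Id$, as is implicit in writing $q\circ p\sim 1$) is tracked correctly. The remaining steps — the weight reduction and the final hypoellipticity deduction — are routine applications of results already available in the excerpt.
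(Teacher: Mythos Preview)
Your proposal is correct and follows essentially the same route as the paper: build a left parametrix from Theorem~\ref{TheoParametrix}, pass to operators via Theorems~\ref{TheoCompositionTau} and~\ref{TheoLarguisimoTau} to get $Q\circ P=\Id+R$ with $R$ $\omega$-regularizing, and conclude $u=Q(Pu)-Ru\in\Sch_\omega(\R^d)$. The paper's proof is much terser---it does the case $\tau=0$ and then says the same argument works for any quantization---whereas you spell out the reduction from $(\alpha_0)$ to a subadditive weight and flag the $(2\pi)^d$ normalization issue, both of which the paper leaves implicit.
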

\begin{proof}	
By Theorem \ref{TheoParametrix} there is a pseudodifferential operator $Q$ such that $Q\circ P=I+R$, being $I$ the identity operator and $R$ an $\omega$-regularizing operator (as a direct consequence of Theorems~\ref{TheoCompositionTau} and \ref{TheoLarguisimoTau} for $\tau=0$). Then, $u=Q(Pu)-Ru\in \Sch_\omega(\R^d)$ for any $u\in \Sch'_\omega(\R^d)$ with $Pu\in\Sch_\omega(\R^d)$. The same argument is valid for an arbitrary quantization.
\end{proof}

\section{Global $\omega$-hypoellipticity for mixed classes}\label{SectionHypoellipticity}
In what follows, $m,m_0 \in \mathbb{R}$, $m_0\leq m$, $0<\rho\leq 1$, and for any given $\omega$ weight function, $\sigma$ is a Gevrey weight function, i.e. $\sigma(t)=t^a$, for some $0<a<1$, such that
\begin{equation}\label{EqDefWeightSigma}
\omega(t^{1/\rho}) = o(\sigma(t)), \qquad t \to \infty.
\end{equation}
\begin{defin}\label{DefHypoellipticity}
Let $a \in \GS^{m,\omega}_{\rho}$. We say that $a$ is an \emph{$\omega$-hypoelliptic symbol} in the class $\HGS^{m,m_0;\omega}_{\rho}$, and we write $a \in \HGS^{m,m_0;\omega}_{\rho}$, if there exist a Gevrey weight function $\sigma$ satisfying~\eqref{EqDefWeightSigma} and $R\geq 1$ such that
\begin{enumerate}
\item[(i)] There exist $C_1,C_2>0$ such that
\[ C_1 e^{m_0\omega(x,\xi)} \leq |a(x,\xi)| \leq C_2 e^{m\omega(x,\xi)}, \qquad \langle(x,\xi)\rangle \geq R. \]
\item[(ii)] There exist $C>0$, $n \in \mathbb{N}$ such that
\[ |D^{\alpha}_x D^{\beta}_{\xi} a(x,\xi)| \leq C^{|\alpha+\beta|} \langle(x,\xi)\rangle^{-\rho|\alpha+\beta|} e^{\frac1{n}\varphi^{\ast}_{\sigma}(n|\alpha|)} e^{\frac1{n}\varphi^{\ast}_{\sigma}(n|\beta|)} |a(x,\xi)|, \]
for $\langle(x,\xi)\rangle \geq R$, $ \ \alpha,\beta \in \mathbb{N}_0^d$.
\end{enumerate}
\end{defin}
We show in Theorem~\ref{TheoEllipticityEquivalent} below that Definition~\ref{DefHypoellipticity} is independent on the quantization $\tau$ for the case $m_0=m$. Hence, we extend~\cite[Proposition 8.4]{BBR1996Global}, showing that $\omega$-hypoelliptic symbol classes are not perturbed by a change of quantization. We observe that any pseudodifferential operator defined by an $\omega$-hypoelliptic symbol is also $\omega$-regular by Theorem~\ref{TheoParametrix}, but the converse is not true. For instance, the twisted Laplacian in $\mathbb{R}^2$,
\[ L=\Big(D_x-\frac{1}{2}y\Big)^2+\Big(D_y-\frac{1}{2}x\Big)^2 \]
is $\omega$-regular for every weight function $\omega$ as it is shown in~\cite[Example 5.4]{BJORegularity}, but its corresponding symbol is not $\omega$-hypoelliptic for any given weight function $\omega$ by~\cite[Remark 5.5]{BJORegularity}.

For technical reasons, the class of global symbols for which Theorem~\ref{TheoEllipticityEquivalent}  holds needs to be smaller than the one introduced in Section~\ref{SectionPreliminaries}. Namely, we need to introduce some kind of mixed conditions. The following is the corresponding definition for symbols:
\begin{defin}\label{mixed-symbol}
We say that $a \in \widetilde{\GS}^{m,\omega}_{\rho}$ if $a \in C^{\infty}(\mathbb{R}^{2d})$ and there exists a Gevrey weight function $\sigma$ satisfying \eqref{EqDefWeightSigma} such that for all $\lambda>0$ there is $C_{\lambda}>0$ with
\[ |D^{\alpha}_x D^{\beta}_{\xi} a(x,\xi)| \leq C_{\lambda} \langle(x,\xi)\rangle^{-\rho|\alpha+\beta|} e^{\lambda \varphi^{\ast}_{\sigma}\big(\frac{|\alpha+\beta|}{\lambda}\big)} e^{m\omega(x,\xi)}, \qquad \alpha,\beta \in \mathbb{N}_0^d, \ x,\xi \in \mathbb{R}^d. \]
\end{defin}
Definitions~\ref{DefHypoellipticity} and~\ref{mixed-symbol} are independent of the weight function $\sigma$, since given two Gevrey weight functions $\sigma_1$ and $\sigma_2$ with~\eqref{EqDefWeightSigma}, the Gevrey weight function $\sigma(t) := \min\{ \sigma_1(t), \sigma_2(t) \}$, $t>1$, satisfies~\eqref{EqDefWeightSigma} too.


According to condition~\eqref{EqDefWeightSigma}, we have, by~\cite[Lemma 2.9(1)]{AJ}, that for all $\lambda,\mu>0$ there exists 
$C>0$ such that
\begin{equation}\label{EqRelationOmegaSigma}
\lambda \varphi^{\ast}_{\sigma}\big(\frac{j}{\lambda}\big) \leq C + \mu \rho \varphi^{\ast}_{\omega}\big(\frac{j}{\mu}\big), \qquad j \in \mathbb{N}_0.
\end{equation}
As an immediate consequence we have $\widetilde{\GS}^{m,\omega}_{\rho} \subseteq \GS^{m,\omega}_{\rho}$.

\begin{lema}\label{LemmaUseful}
Let $a \in \widetilde{\GS}^{m,\omega}_{\rho}$. Then $a \in \HGS^{m,m;\omega}_{\rho}$ if and only if there exist $R \geq 1$ and $C'_1>0$ such that $|a(x,\xi)| \geq C'_1 e^{m\omega(x,\xi)}$ for $\langle(x,\xi)\rangle \geq R$.
\end{lema}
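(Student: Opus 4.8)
The ``only if'' part is immediate: if $a \in \HGS^{m,m;\omega}_{\rho}$, then condition (i) of Definition~\ref{DefHypoellipticity} with $m_0 = m$ provides $R \geq 1$ and $C_1 > 0$ with $|a(x,\xi)| \geq C_1 e^{m\omega(x,\xi)}$ for $\langle(x,\xi)\rangle \geq R$, which is exactly the asserted lower bound (take $C'_1 = C_1$). So the real work is in the converse, and the plan is to verify conditions (i) and (ii) of Definition~\ref{DefHypoellipticity} with $m_0 = m$, using the \emph{same} Gevrey weight $\sigma$ that accompanies the membership $a \in \widetilde{\GS}^{m,\omega}_{\rho}$ (this $\sigma$ satisfies \eqref{EqDefWeightSigma}, which is precisely what Definition~\ref{DefHypoellipticity} asks for).

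Condition (i) costs almost nothing. Its lower bound is the standing hypothesis $|a(x,\xi)| \geq C'_1 e^{m\omega(x,\xi)}$ for $\langle(x,\xi)\rangle \geq R$; its upper bound $|a(x,\xi)| \leq C_2 e^{m\omega(x,\xi)}$ is just the case $\alpha = \beta = 0$ of the estimate in Definition~\ref{mixed-symbol} (valid for all $x,\xi$, hence in particular for $\langle(x,\xi)\rangle \geq R$). So everything reduces to condition (ii).

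For condition (ii), fix $\lambda = 1$ in Definition~\ref{mixed-symbol} and rename the corresponding constant $C_0$. For $\langle(x,\xi)\rangle \geq R$, dividing the estimate of Definition~\ref{mixed-symbol} by the lower bound, that is, inserting $e^{m\omega(x,\xi)} \leq (C'_1)^{-1}|a(x,\xi)|$, gives
\[
|D^{\alpha}_x D^{\beta}_{\xi} a(x,\xi)| \leq \frac{C_0}{C'_1}\, \langle(x,\xi)\rangle^{-\rho|\alpha+\beta|}\, e^{\varphi^{\ast}_{\sigma}(|\alpha|+|\beta|)}\, |a(x,\xi)| .
\]
The only thing left is to put the exponential factor into the right shape. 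By Lemma~\ref{lemmafistrella}(2), applied with parameter $\tfrac12$ to the arguments $|\alpha|$ and $|\beta|$ (the degenerate cases $\alpha=0$ or $\beta=0$ following from the fact that $\varphi^{\ast}_{\sigma}(x)/x$ is increasing),
\[
\varphi^{\ast}_{\sigma}(|\alpha|+|\beta|) \leq \tfrac12\varphi^{\ast}_{\sigma}(2|\alpha|) + \tfrac12\varphi^{\ast}_{\sigma}(2|\beta|) ,
\]
so that, with $n := 2$,
\[
e^{\varphi^{\ast}_{\sigma}(|\alpha|+|\beta|)} \leq e^{\frac1n\varphi^{\ast}_{\sigma}(n|\alpha|)}\, e^{\frac1n\varphi^{\ast}_{\sigma}(n|\beta|)} .
\]
Finally I absorb the constant $C_0/C'_1$ into a power $C^{|\alpha+\beta|}$ by taking $C := \max\{1, C_0/C'_1\}$: for $|\alpha+\beta| \geq 1$ this is clear, and for $\alpha = \beta = 0$ the inequality in (ii) reads $|a(x,\xi)| \leq |a(x,\xi)|$ and is trivially true. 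This yields exactly estimate (ii) of Definition~\ref{DefHypoellipticity} with $m_0 = m$, and together with (i) it shows $a \in \HGS^{m,m;\omega}_{\rho}$.

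I do not expect a genuine obstacle: the whole argument is bookkeeping with the Young conjugate $\varphi^{\ast}_{\sigma}$. The one point that deserves a word of care is reconciling the ``single'' exponent $\lambda\varphi^{\ast}_{\sigma}(\,\cdot\,/\lambda)$ produced by Definition~\ref{mixed-symbol} with the ``split'' exponent $\tfrac1n\varphi^{\ast}_{\sigma}(n|\alpha|)+\tfrac1n\varphi^{\ast}_{\sigma}(n|\beta|)$ demanded in Definition~\ref{DefHypoellipticity}(ii); this is handled by the splitting in Lemma~\ref{lemmafistrella}(2), and choosing $\lambda$ and $n$ compatibly (here $\lambda=1$, $n=2$) makes the two sides match directly, with no loss.
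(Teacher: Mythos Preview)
Your proof is correct and follows essentially the same route as the paper: both take $\lambda=1$ in Definition~\ref{mixed-symbol}, replace $e^{m\omega(x,\xi)}$ by $(C'_1)^{-1}|a(x,\xi)|$ via the lower bound, split $e^{\varphi^{\ast}_{\sigma}(|\alpha+\beta|)}$ using \eqref{EqEstimationVarphi} with $n=2$, and absorb the constant into $C^{|\alpha+\beta|}$. The only cosmetic difference is that the paper observes $C'_1\leq C$ (from \eqref{aux}) to get $C/C'_1\geq 1$ directly, whereas you write $C=\max\{1,C_0/C'_1\}$ and treat $\alpha=\beta=0$ separately; both are fine.
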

\begin{proof}
The necessity is obvious. For the sufficiency, since $a \in \widetilde{\GS}^{m,\omega}_{\rho}$, for $\sigma$ as in~\eqref{EqDefWeightSigma} there exists $C>0$ with
\begin{equation}\label{EqLemmaUseful1}
|D^{\alpha}_x D^{\beta}_{\xi} a(x,\xi)| \leq C \langle(x,\xi)\rangle^{-\rho|\alpha+\beta|} e^{\varphi^{\ast}_{\sigma}(|\alpha+\beta|)} e^{m\omega(x,\xi)}, \qquad \alpha,\beta \in \mathbb{N}_0^d, \ x,\xi \in \mathbb{R}^d,
\end{equation}
which in particular yields
\begin{equation}\label{aux}
C'_1 e^{m\omega(x,\xi)} \leq |a(x,\xi)| \leq C e^{m\omega(x,\xi)}, \qquad \langle(x,\xi)\rangle \geq R.
\end{equation}
This shows Definition~\ref{DefHypoellipticity}$(i)$. For condition $(ii)$, by~\eqref{EqEstimationVarphi}, $e^{\varphi^{\ast}_{\sigma}(|\alpha+\beta|)} \leq e^{\frac1{2}\varphi^{\ast}_{\sigma}(2|\alpha|)} e^{\frac1{2}\varphi^{\ast}_{\sigma}(2|\beta|)}$. Thus, by~\eqref{EqLemmaUseful1} and~\eqref{aux}, we have (since $C_1' \leq C$)
\[ |D^{\alpha}_x D^{\beta}_{\xi} a(x,\xi)| \leq \Big( \frac{C}{C'_1} \Big)^{|\alpha+\beta|} \langle(x,\xi)\rangle^{-\rho|\alpha+\beta|} e^{\frac1{2}\varphi^{\ast}_{\sigma}(2|\alpha|)} e^{\frac1{2}\varphi^{\ast}_{\sigma}(2|\beta|)} |a(x,\xi)|, \]
for $\langle(x,\xi)\rangle \geq R$, $\alpha,\beta \in \mathbb{N}_0^d$. Since $a \in \widetilde{\GS}^{m,\omega}_{\rho} \subseteq \GS^{m,\omega}_{\rho}$, the result follows.
\end{proof}

Similar mixed conditions are imposed to amplitudes and formal sums.
\begin{defin}\label{DefAmplitudeMix}
An amplitude $a(x,y,\xi) \in C^{\infty}(\mathbb{R}^{3d})$ belongs to $\widetilde{\GA}^{m,\omega}_{\rho}$ if there exists a Gevrey weight function $\sigma$ satisfying \eqref{EqDefWeightSigma} such that for all $\lambda>0$ there is $C_{\lambda}>0$ with
\[ |D^{\alpha}_x D^{\beta}_y D^{\gamma}_{\xi} a(x,y,\xi)| \leq C_{\lambda} \frac{\langle x-y \rangle^{\rho|\alpha+\beta+\gamma|}}{\langle(x,y,\xi)\rangle^{\rho|\alpha+\beta+\gamma|}} e^{\lambda \varphi^{\ast}_{\sigma}\big(\frac{|\alpha+\beta+\gamma|}{\lambda}\big)} e^{m\omega(x,\xi)}, \quad \alpha,\beta,\gamma \in \mathbb{N}_0^d, \ x,y,\xi \in \mathbb{R}^d. \]
\end{defin}

\begin{defin}
A formal sum $\sum p_j$ is in $\widetilde{\FGS}^{m,\omega}_{\rho}$ if $p_j \in C^{\infty}(\mathbb{R}^{2d})$ and there exist a Gevrey weight function $\sigma$ satisfying~\eqref{EqDefWeightSigma} and $R\geq 1$ such that for all $n \in \mathbb{N}$ there exists $C_n>0$ such that
\[ |D^{\alpha}_x D^{\beta}_{\xi} p_j(x,\xi)| \leq C_n \langle(x,\xi)\rangle^{-\rho(|\alpha+\beta|+j)} e^{n\varphi^{\ast}_{\sigma}\big(\frac{|\alpha+\beta|+j}{n}\big)} e^{m\omega(x,\xi)}, \]
for each $j \in \mathbb{N}_0$, $\alpha,\beta \in \mathbb{N}_0^d$, $\log\big(\frac{\langle(x,\xi)\rangle}{R}\big) \geq \frac{n}{j}\varphi^{\ast}_{\omega}\big(\frac{j}{n}\big)$.
\end{defin}

\begin{defin}\label{EqDefinEquivalenceFormalSums}
We say that $\sum a_j \sim \sum b_j$ in $\widetilde{\FGS}^{m,\omega}_{\rho}$ if there exist a Gevrey weight function $\sigma$ satisfying \eqref{EqDefWeightSigma} and $R\geq 1$ such that for all $n \in \mathbb{N}$ there exist $C_n>0$, $N_n \in \mathbb{N}$ such that
\[ \big|D^{\alpha}_x D^{\beta}_{\xi} \sum_{j<N} (a_j-b_j)\big| \leq C_n \langle(x,\xi)\rangle^{-\rho(|\alpha+\beta|+N)} e^{n\varphi^{\ast}_{\sigma}\big(\frac{|\alpha+\beta|+N}{n}\big)} e^{m\omega(x,\xi)}, \]
for all $N \geq N_n$, $\alpha,\beta \in \mathbb{N}_0^d$, $\log\big(\frac{\langle(x,\xi)\rangle}{R}\big) \geq \frac{n}{N}\varphi^{\ast}_{\omega}\big(\frac{N}{n}\big)$.
\end{defin}
Again by~\eqref{EqRelationOmegaSigma} it is also clear that $\widetilde{\GA}^{m,\omega}_{\rho} \subseteq \GA^{m,\omega}_{\rho}$ and $\widetilde{\FGS}^{m,\omega}_{\rho} \subseteq \FGS^{m,\omega}_{\rho}$.

The amplitudes introduced in Definition~\ref{DefAmplitudeMix} do not have exponential growth in the variable $y$ to avoid the increasing in the order $m \in \mathbb{R}$ in some results in Section~\ref{SectionSymbolicCalculus}. For instance, if $a \in \widetilde{\GA}^{m,\omega}_{\rho}$, then, following Example~\ref{ExampleFormalSumTau},
\begin{equation}\label{ExPjTauMix}
p_j(x,\xi) := \sum_{j=0}^{\infty} \sum_{|\beta+\gamma|=j} \frac1{\beta! \gamma!} \tau^{|\beta|} (1-\tau)^{|\gamma|} \partial^{\beta+\gamma}_{\xi} (-D_x)^{\beta} D^{\gamma}_y a(x,y,\xi) \Big|_{y=x} \in \widetilde{\FGS}^{m,\omega}_{\rho}.
\end{equation}
It is easy to check that $\varphi_j$ (defined in~\eqref{psi-function}) belongs to $\widetilde{\GS}^{0,\omega}_{\rho}$. Hence the corresponding symbolic calculus is developed in the same manner as for the global symbol class $\GS^{m,\omega}_{\rho}$. In particular, by~\cite[Theorem 4.6]{AJ}, we have, from~\eqref{ExPjTauMix},
\begin{equation}\label{EqTauSymbolMixing}
p_{\tau}(x,\xi) := \sum_{j=0}^{\infty} \varphi_j(x,\xi) p_j(x,\xi) \in \widetilde{\GS}^{m,\omega}_{\rho}
\end{equation}
for all $\tau \in \mathbb{R}$. Such symbol is called is the \emph{$\tau$-symbol of the pseudodifferential operator associated to the amplitude $a(x,y,\xi) \in \widetilde{\GA}^{m,\omega}_{\rho}$}. In addition, as a consequence of Theorem~\ref{TheoLarguisimoTau} we obtain Theorem~\ref{TheoRelationDifferentSymbols} for mixed classes.

\begin{theo}\label{TheoEquivalenceTauSymbolsTilde}
Let $\tau_1,\tau_2 \in \mathbb{R}$. If $a_{\tau_1}(x,\xi), a_{\tau_2}(x,\xi) \in \widetilde{\GS}^{m,\omega}_{\rho}$ are the $\tau_1$-symbol and the $\tau_2$-symbol of the pseudodifferential operator $A$, then
\[ a_{\tau_2}(x,\xi) \sim \sum_{j=0}^{\infty} \sum_{|\alpha|=j} \frac1{\alpha!} (\tau_1-\tau_2)^{|\alpha|} \partial^{\alpha}_{\xi} D^{\alpha}_x a_{\tau_1}(x,\xi) \]
in $\widetilde{\FGS}^{m,\omega}_{\rho}$.
\end{theo}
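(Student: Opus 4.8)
The plan is to reduce Theorem~\ref{TheoEquivalenceTauSymbolsTilde} to its non-mixed counterpart, Theorem~\ref{TheoRelationDifferentSymbols}, by simply re-running the proof of the latter while tracking the sharper mixed estimates afforded by Definition~\ref{mixed-symbol}. Recall that Theorem~\ref{TheoRelationDifferentSymbols} was obtained as a consequence of Theorem~\ref{TheoLarguisimoTau}: one expresses $a_{\tau_1}((1-\tau_1)x+\tau_1 y,\xi)$ as the amplitude of $A$ (modulo an $\omega$-regularizing operator) and then applies the asymptotic expansion for $\tau_2$-symbols given there, followed by a Vandermonde-type rearrangement of the resulting multinomials. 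All of these combinatorial identities are purely formal and carry over verbatim; the only thing that needs re-checking is that each intermediate object lands in the \emph{mixed} class rather than merely in $\GS$, $\GA$, or $\FGS$.

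First I would observe, as already noted in the excerpt around~\eqref{ExPjTauMix}--\eqref{EqTauSymbolMixing}, that the cut-off functions $\varphi_j$ from~\eqref{psi-function} lie in $\widetilde{\GS}^{0,\omega}_{\rho}$, and that if $a_{\tau_1}\in\widetilde{\GS}^{m,\omega}_{\rho}$ then $a_{\tau_1}((1-\tau_1)x+\tau_1 y,\xi)$ is an amplitude in $\widetilde{\GA}^{m,\omega}_{\rho}$ — this is the mixed analogue of Lemma~\ref{LemmaSymbolAmplitudeTau}, and crucially the order stays at $m$ (no $L^k$ blow-up) precisely because the mixed amplitude class has exponential growth only in $(x,\xi)$, not in $y$, so the estimate~\eqref{EqWeightTau} is not needed here. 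Then I would invoke the mixed version of Theorem~\ref{TheoLarguisimoTau}: for an amplitude in $\widetilde{\GA}^{m,\omega}_{\rho}$, the associated $\tau$-symbol lies in $\widetilde{\GS}^{m,\omega}_{\rho}$ and has the asymptotic expansion $\sum_j p_j$ in $\widetilde{\FGS}^{m,\omega}_{\rho}$ with the same $p_j$ as in Example~\ref{ExampleFormalSumTau}. This mixed version of Theorem~\ref{TheoLarguisimoTau} is exactly what the discussion preceding the theorem asserts ("as a consequence of Theorem~\ref{TheoLarguisimoTau} we obtain Theorem~\ref{TheoRelationDifferentSymbols} for mixed classes"), so I may quote it.

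With these ingredients the computation is identical to the display in the proof of Theorem~\ref{TheoRelationDifferentSymbols}: starting from
\[
a_{\tau_2}(x,\xi)\sim\sum_{j=0}^{\infty}\sum_{|\beta+\gamma|=j}\frac{(-1)^{|\beta|}}{\beta!\gamma!}\tau_2^{|\beta|}(1-\tau_2)^{|\gamma|}\partial^{\beta+\gamma}_{\xi}D^{\beta}_x D^{\gamma}_y\bigl(a_{\tau_1}((1-\tau_1)x+\tau_1 y,\xi)\big|_{y=x}\bigr),
\]
applying the Leibniz rule and collapsing $y=x$, and using the binomial identity to collect the coefficient of $\partial^{\alpha}_{\xi}D^{\alpha}_x a_{\tau_1}$ into $\tfrac{1}{\alpha!}((1-\tau_2)\tau_1-\tau_2(1-\tau_1))^{|\alpha|}=\tfrac{1}{\alpha!}(\tau_1-\tau_2)^{|\alpha|}$. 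The equivalence $\sim$ here is understood in $\widetilde{\FGS}^{m,\omega}_{\rho}$ rather than $\FGS^{m,\omega}_{\rho}$, which is legitimate since every symbol and formal sum in sight has been shown to be mixed.

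The main obstacle — really the only point requiring care — is checking that the re-derivation of the mixed Theorem~\ref{TheoLarguisimoTau} does not leak out of the mixed class: one must verify that $\sum_j p_j\in\widetilde{\FGS}^{m,\omega}_{\rho}$ (the mixed analogue of Example~\ref{ExampleFormalSumTau}, which holds because the amplitude's $y$-growth is trivial, so the order does not jump to $\max\{m,mL\}$) and that the parametrix-style kernel estimates in the three steps of the proof of Theorem~\ref{TheoLarguisimoTau} go through with $\varphi^{\ast}_{\sigma}$ in place of $\varphi^{\ast}_{\omega}$ in the exponents of the derivative bounds, which is exactly the content of~\eqref{EqRelationOmegaSigma} converting $\sigma$-type bounds into $\omega$-type ones whenever needed for convergence. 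Once this bookkeeping is accepted, the statement follows. I would therefore write the proof as: "$a_{\tau_1}((1-\tau_1)x+\tau_1 y,\xi)\in\widetilde{\GA}^{m,\omega}_{\rho}$ is an amplitude for $A$ modulo an $\omega$-regularizing operator; the mixed version of Theorem~\ref{TheoLarguisimoTau} gives the asymptotic expansion of $a_{\tau_2}$; the combinatorial identities in the proof of Theorem~\ref{TheoRelationDifferentSymbols} apply unchanged, now in $\widetilde{\FGS}^{m,\omega}_{\rho}$, yielding the claim."
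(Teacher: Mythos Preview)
Your approach coincides with the paper's: no separate proof of Theorem~\ref{TheoEquivalenceTauSymbolsTilde} is given there, the paper simply declaring (in the paragraph containing~\eqref{ExPjTauMix}--\eqref{EqTauSymbolMixing}) that the mixed symbolic calculus yields Theorem~\ref{TheoRelationDifferentSymbols} for mixed classes as a consequence of Theorem~\ref{TheoLarguisimoTau}, and you have correctly unpacked what this means, including the combinatorial simplification.

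One caveat worth flagging: your assertion that $a_{\tau_1}((1-\tau_1)x+\tau_1 y,\xi)\in\widetilde{\GA}^{m,\omega}_{\rho}$ is not literally correct for $\tau_1\neq 0$, since Definition~\ref{DefAmplitudeMix} demands growth $e^{m\omega(x,\xi)}$ whereas this function grows like $e^{m\omega((1-\tau_1)x+\tau_1 y,\xi)}$ (take $x,\xi$ fixed and $|y|\to\infty$). This does not damage the argument, however: the formal sums in~\eqref{ExPjTauMix} and the kernel terms $I_j$, $W_N$ in the proof of Theorem~\ref{TheoLarguisimoTau} only sample the amplitude at $y=x$, where the growth is exactly $e^{m\omega(x,\xi)}$, while the $Q_j$ terms live on the support of $1-\chi$, i.e.\ $|x-y|<2r$, where the two weights are comparable. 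Hence the mixed estimates propagate through and both $\sum_j p_j\in\widetilde{\FGS}^{m,\omega}_{\rho}$ and the equivalence hold as you claim. The paper elides this point in exactly the same way.
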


Now we are ready to prove the main theorem of this section.
\begin{theo}\label{TheoEllipticityEquivalent}
Let $\tau_1,\tau_2 \in \mathbb{R}$ and let $a_{\tau_1} \in \widetilde{\GS}^{m,\omega}_{\rho}$. If $a_{\tau_1} \in \HGS^{m,m;\omega}_{\rho}$, then $a_{\tau_2} \in \HGS^{m,m;\omega}_{\rho}$.
\end{theo}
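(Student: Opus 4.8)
The plan is to deduce $a_{\tau_2}\in\HGS^{m,m;\omega}_{\rho}$ from Lemma~\ref{LemmaUseful} by perturbing off the leading term of the expansion in Theorem~\ref{TheoEquivalenceTauSymbolsTilde}. First I would record that $a_{\tau_2}$ lies in $\widetilde{\GS}^{m,\omega}_{\rho}$ (it is a $\tau$-symbol of the operator associated with $a_{\tau_1}$; cf.~\eqref{EqTauSymbolMixing} and Theorem~\ref{TheoEquivalenceTauSymbolsTilde}), so that by Lemma~\ref{LemmaUseful} it suffices to produce $R\geq1$ and $c>0$ with $|a_{\tau_2}(x,\xi)|\geq c\,e^{m\omega(x,\xi)}$ for $\langle(x,\xi)\rangle\geq R$. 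Since $a_{\tau_1}\in\HGS^{m,m;\omega}_{\rho}$, Definition~\ref{DefHypoellipticity}$(i)$ provides $R_1\geq1$ and $C_1>0$ with $|a_{\tau_1}(x,\xi)|\geq C_1 e^{m\omega(x,\xi)}$ for $\langle(x,\xi)\rangle\geq R_1$; this is the leading contribution that we want to dominate the rest.

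Next I would invoke Theorem~\ref{TheoEquivalenceTauSymbolsTilde} to write $a_{\tau_2}\sim\sum_{j\geq0}p_j$ in $\widetilde{\FGS}^{m,\omega}_{\rho}$, with $p_0=a_{\tau_1}$ and $p_j=\sum_{|\alpha|=j}\frac1{\alpha!}(\tau_1-\tau_2)^{|\alpha|}\partial^{\alpha}_{\xi}D^{\alpha}_x a_{\tau_1}$ for $j\geq1$. Taking $n=1$ in Definition~\ref{EqDefinEquivalenceFormalSums} yields $R\geq1$, $N_0\in\N$ and $C>0$ such that, in the region $\log(\langle(x,\xi)\rangle/R)\geq\frac1{N_0}\varphi^{\ast}_{\omega}(N_0)$,
\[ \Bigl| a_{\tau_2}(x,\xi)-\sum_{j<N_0}p_j(x,\xi)\Bigr| \leq C\langle(x,\xi)\rangle^{-\rho N_0}e^{\varphi^{\ast}_{\sigma}(N_0)}e^{m\omega(x,\xi)} \leq C'\langle(x,\xi)\rangle^{-\rho}e^{m\omega(x,\xi)} \]
for $\langle(x,\xi)\rangle\geq1$, where $C'=Ce^{\varphi^{\ast}_{\sigma}(N_0)}$ and we used $N_0\geq1$. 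For the finitely many intermediate terms $1\leq j<N_0$ I would not even need the formal-sum estimate: differentiating $a_{\tau_1}\in\widetilde{\GS}^{m,\omega}_{\rho}$ (say with $\lambda=1$) gives a constant $C_{\ast}>0$ with $|D^{\alpha}_x D^{\beta}_{\xi} a_{\tau_1}|\leq C_{\ast}\langle(x,\xi)\rangle^{-\rho|\alpha+\beta|}e^{\varphi^{\ast}_{\sigma}(|\alpha+\beta|)}e^{m\omega(x,\xi)}$, hence
\[ |p_j(x,\xi)| \leq \frac{(d|\tau_1-\tau_2|)^{j}}{j!}\,C_{\ast}\,\langle(x,\xi)\rangle^{-2\rho j}e^{\varphi^{\ast}_{\sigma}(2j)}e^{m\omega(x,\xi)} \leq C_j''\langle(x,\xi)\rangle^{-\rho}e^{m\omega(x,\xi)}, \qquad \langle(x,\xi)\rangle\geq1, \]
for each such $j$, with $C_j''$ depending only on $j,d,\tau_1,\tau_2,\sigma,m$.

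Finally I would combine the three bounds. Set $\widetilde C:=C'+\sum_{1\leq j<N_0}C_j''$ and $R_2:=\max\{R_1,\,R\,e^{\varphi^{\ast}_{\omega}(N_0)/N_0},\,1\}$, so that all the above estimates hold for $\langle(x,\xi)\rangle\geq R_2$. Then, writing $a_{\tau_2}=p_0+\sum_{1\leq j<N_0}p_j+(a_{\tau_2}-\sum_{j<N_0}p_j)$ and using the triangle inequality,
\[ |a_{\tau_2}(x,\xi)| \geq \bigl(C_1-\widetilde C\langle(x,\xi)\rangle^{-\rho}\bigr)e^{m\omega(x,\xi)}, \qquad \langle(x,\xi)\rangle\geq R_2. \]
Enlarging $R_2$ so that $\widetilde C R_2^{-\rho}\leq C_1/2$ gives $|a_{\tau_2}(x,\xi)|\geq\tfrac{C_1}{2}e^{m\omega(x,\xi)}$ on $\langle(x,\xi)\rangle\geq R_2$, and Lemma~\ref{LemmaUseful} then delivers $a_{\tau_2}\in\HGS^{m,m;\omega}_{\rho}$. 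I expect the only real care needed is the bookkeeping of the several ``$\langle(x,\xi)\rangle$ large'' regions coming from the definition of $\sim$ in $\widetilde{\FGS}^{m,\omega}_{\rho}$ and from the $\widetilde{\GS}$-estimates; fixing once and for all the truncation index $N_0$ collapses them to a single threshold $R_2$, after which the argument is simply that the leading symbol $a_{\tau_1}$ dominates a tail that decays like $\langle(x,\xi)\rangle^{-\rho}$.
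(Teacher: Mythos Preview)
Your argument is correct and follows essentially the same route as the paper's proof: reduce via Lemma~\ref{LemmaUseful} to a lower bound on $|a_{\tau_2}|$, invoke the asymptotic expansion of Theorem~\ref{TheoEquivalenceTauSymbolsTilde}, fix a truncation index $N_0$ coming from Definition~\ref{EqDefinEquivalenceFormalSums} with $n=1$, and then show that both the finitely many intermediate terms $p_1,\dots,p_{N_0-1}$ and the tail $a_{\tau_2}-\sum_{j<N_0}p_j$ are $O(\langle(x,\xi)\rangle^{-\rho})e^{m\omega(x,\xi)}$, hence dominated by the leading term $a_{\tau_1}$ for $\langle(x,\xi)\rangle$ large. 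The only cosmetic difference is that the paper converts $\varphi^{\ast}_{\sigma}$ to $\varphi^{\ast}_{\omega}$ via~\eqref{EqRelationOmegaSigma} before absorbing constants, whereas you observe (more simply) that once $N_0$ is fixed, $e^{\varphi^{\ast}_{\sigma}(N_0)}$ and $e^{\varphi^{\ast}_{\sigma}(2j)}$ for $j<N_0$ are just constants that can be absorbed directly; this is a minor streamlining, not a different method.
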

\begin{proof}
By~\eqref{EqTauSymbolMixing} we have $a_{\tau_2} \in \widetilde{\GS}^{m,\omega}_{\rho}$. Therefore, by Lemma~\ref{LemmaUseful}, it is enough to show that there exist $R\geq 1$, $D>0$ such that
\begin{equation}\label{EqEstimateaTau2}
|a_{\tau_2}(x,\xi)| \geq D e^{m\omega(x,\xi)}
\end{equation}
for $\langle(x,\xi)\rangle \geq R$. In fact, by assumption, by the same result there are $R_1 \geq 1$, $D_1>0$ such that
\begin{equation}\label{EqEstimateaTau1}
|a_{\tau_1}(x,\xi)| \geq D_1 e^{m\omega(x,\xi)}
\end{equation}
for $\langle(x,\xi)\rangle \geq R_1$. By Theorem~\ref{TheoEquivalenceTauSymbolsTilde} and Definition~\ref{EqDefinEquivalenceFormalSums}, there exist a Gevrey weight function $\sigma_1$ satisfying \eqref{EqDefWeightSigma} and $R_2\geq 1$ such that 
there exist $C_1>0$, $N_1 \in \mathbb{N}$:
\[ \Big| a_{\tau_2}(x,\xi) - \sum_{j<N} \sum_{|\alpha|=j} \frac1{\alpha!} (\tau_1-\tau_2)^{|\alpha|} \partial^{\alpha}_{\xi} D^{\alpha}_x a_{\tau_1}(x,\xi) \Big| \leq C_1 \langle(x,\xi)\rangle^{-\rho N} e^{\varphi^{\ast}_{\sigma_1}(N)} e^{m\omega(x,\xi)} \]
for $N \geq N_1$ and $\log\big(\frac{\langle(x,\xi)\rangle}{R_2}\big) \geq \frac1{N}\varphi^{\ast}_{\omega}(N)$.
By~\eqref{EqRelationOmegaSigma}, there exists $A_1>0$ such that $\varphi^{\ast}_{\sigma_1}(N) \leq A_1 + \rho\varphi^{\ast}_{\omega}(N)$ for all $N\in\N$. 
Then,
\begin{equation}\label{EqTheoEllipticity2}
\Big| a_{\tau_2}(x,\xi) - \sum_{j<N} \sum_{|\alpha|=j} \frac1{\alpha!} (\tau_1-\tau_2)^{|\alpha|} \partial^{\alpha}_{\xi} D^{\alpha}_x a_{\tau_1}(x,\xi) \Big| \leq C_1 e^{A_1} R_3^{-\rho N} e^{m\omega(x,\xi)},
\end{equation}
for all $N\geq N_1$ and $\langle(x,\xi)\rangle \geq R_3 e^{\frac1{N} \varphi^{\ast}_{\omega}(N)}$, 
where $R_3 \geq R_2$ will be determined later.

We fix $N=N_1 \in \mathbb{N}$ and we claim that 
\begin{equation}\label{EqTheoHypoClaim1}
\Big| \sum_{j=0}^{N-1} \sum_{|\alpha|=j} \frac1{\alpha!} (\tau_1-\tau_2)^{|\alpha|} \partial^{\alpha}_{\xi} D^{\alpha}_x a_{\tau_1}(x,\xi) \Big| \geq \frac{D_1}{2} e^{m\omega(x,\xi)},
\end{equation}
if $\langle(x,\xi)\rangle$ is large enough. The inequality is immediate for $N=1$ by~\eqref{EqEstimateaTau1} for $\langle(x,\xi)\rangle \geq R_1$, so we shall assume that $N>1$. 
First, we estimate
\[ \Big| \sum_{j=1}^{N-1} \sum_{|\alpha|=j} \frac1{\alpha!} (\tau_1-\tau_2)^{|\alpha|} \partial^{\alpha}_{\xi} D^{\alpha}_x a_{\tau_1}(x,\xi) \Big|. \]
Since $a_{\tau_1}(x,\xi) \in \widetilde{\GS}^{m,\omega}_{\rho}$, there exists a Gevrey weight function $\sigma_2$ satisfying~\eqref{EqDefWeightSigma} such that 
there is $C_2>0$ with
\[ |D^{\alpha}_x D^{\alpha}_{\xi} a_{\tau_1}(x,\xi)| \leq C_2 \langle(x,\xi)\rangle^{-2\rho} e^{2\varphi^{\ast}_{\sigma_2}(N-1)} e^{m\omega(x,\xi)}, \]
for all $x, \xi \in \mathbb{R}^d$ and $1\leq |\alpha| \leq N-1$. Again by~\eqref{EqRelationOmegaSigma}, there exists $A_2>0$ such that $\varphi^{\ast}_{\sigma_2}(N-1) \leq A_2 + \rho\varphi^{\ast}_{\omega}(N-1)$. Consider $\langle(x,\xi)\rangle$ large enough so that
\[ \langle(x,\xi)\rangle \geq R_4 e^{\varphi^{\ast}_{\omega}(N-1)}, \]
with $R_4\geq 1$ to be determined. Then
\begin{align*}
|D^{\alpha}_x D^{\alpha}_{\xi} a_{\tau_1}(x,\xi)| &\leq C_2 e^{2A_2} \langle(x,\xi)\rangle^{-2\rho} e^{2\rho \varphi^{\ast}_{\omega}(N-1)} e^{m\omega(x,\xi)} \\
&\leq C_2 e^{2A_2} (R_4)^{-2\rho} e^{m\omega(x,\xi)},
\end{align*}
for $\langle(x,\xi)\rangle \geq R_4 e^{\varphi^{\ast}_{\omega}(N-1)}$, $1 \leq |\alpha| \leq N-1$. On the other hand, by formula~\cite[(0.3.1)]{NR2010global}, we obtain
\[ \sum_{j=1}^{N-1} \sum_{|\alpha|=j} \frac{|\tau_1-\tau_2|^{|\alpha|}}{\alpha!} \leq \sum_{j=1}^{N-1} \frac{(d|\tau_1-\tau_2|)^j}{j!}  \leq e^{d|\tau_1-\tau_2|}. \]
So, we deduce
\begin{equation}\label{EqTheoHypoClaim2}
\Big| \sum_{j=1}^{N-1} \sum_{|\alpha|=j} \frac1{\alpha!} (\tau_1-\tau_2)^{|\alpha|} \partial^{\alpha}_{\xi} D^{\alpha}_x a_{\tau_1}(x,\xi) \Big| \leq C_2 e^{2A_2} (R_4)^{-2\rho}  e^{d|\tau_1-\tau_2|} e^{m\omega(x,\xi)},
\end{equation}
for $\langle(x,\xi)\rangle \geq R_4 e^{\varphi^{\ast}_{\omega}(N-1)}$. Hence, by the triangular inequality, from formulas~\eqref{EqTheoHypoClaim2} and~\eqref{EqEstimateaTau1} we have
\begin{align*}
\Big| \sum_{j=0}^{N-1} \sum_{|\alpha|=j} \frac1{\alpha!} (\tau_1-\tau_2)^{|\alpha|} \partial^{\alpha}_{\xi} D^{\alpha}_x a_{\tau_1}(x,\xi) \Big| &\geq D_1 e^{m\omega(x,\xi)} - C_2 e^{2A_2} (R_4)^{-2\rho}  e^{d|\tau_1-\tau_2|} e^{m\omega(x,\xi)} \\
&\geq \frac{D_1}{2} e^{m\omega(x,\xi)},
\end{align*}
which shows~\eqref{EqTheoHypoClaim1} provided $R_4$ be so that
\[ (R_4)^{2\rho} \geq \frac{2}{D_1} C_2 e^{2A_2} e^{d|\tau_1-\tau_2|}, \]
and $\langle(x,\xi)\rangle \geq \max\{ R_1, R_4 e^{\varphi^{\ast}_{\omega}(N-1)}\}$. Finally we obtain, by~\eqref{EqTheoHypoClaim1} and~\eqref{EqTheoEllipticity2}, 
\[ |a_{\tau_2}(x,\xi)| \geq \frac{D_1}{2} e^{m\omega(x,\xi)} - C_1 e^{A_1} R_3^{-\rho N} e^{m\omega(x,\xi)} \geq \frac{D_1}{4} e^{m\omega(x,\xi)} \]
if $R_3^{\rho N} \geq \frac{4}{D_1} C_1 e^{A_1}$ and $\langle(x,\xi)\rangle \geq R:=\max\{ R_1, R_4 e^{\varphi^{\ast}_{\omega}(N-1)}, R_3 e^{\frac1{N} \varphi^{\ast}_{\omega}(N)} \}$. Then~\eqref{EqEstimateaTau2} is satisfied for $D=\frac{D_1}{4}>0$ and $R \geq 1$, and the proof is complete.
\end{proof}


\textbf{Acknowledgements.} The author was supported by the project GV Prometeo/2017/102. The author is indebted to David Jornet for his helpful comments and ideas, and for the careful reading of the paper. This is part of the author's Ph.D. Thesis.

\providecommand{\bysame}{\leavevmode\hbox to3em{\hrulefill}\thinspace}
\providecommand{\MR}{\relax\ifhmode\unskip\space\fi MR }
\providecommand{\MRhref}[2]{%
  \href{http://www.ams.org/mathscinet-getitem?mr=#1}{#2}
}
\providecommand{\href}[2]{#2}

\noindent
{Instituto Universitario de Matem\'atica Pura y Aplicada IUMPA\\
Universitat Po\-li\-t\`ecni\-ca de Val\`encia\\
Camino de Vera, s/n\\
E-46071 Valencia\\
Spain}

\vspace{0.2cm}
\noindent
\Letter $ \ $ \href{viaslo@upv.es}{viaslo@upv.es}

\end{document}